\setlist[enumerate]{wide,label=\emph{(\roman*)}}
\numberwithin{equation}{section}
\newtheorem{thm}{Theorem}[section]
\newtheorem{lem}[thm]{Lemma} 
\newtheorem{prop}[thm]{Proposition}
\newtheorem{defn}[thm]{Definition}
\theoremstyle{definition}
\newtheorem{rem}[thm]{Remark}
\newcommand\R{{\mathbb R}}
\newcommand\N{{\mathbb N}}
\newcommand\Comp{{\mathrm{c}}}
\newcommand\cont{{\mathcal C}}
\newcommand\Ens{{\mathcal E}}
\newcommand\nub{{\boldsymbol \nu}}
\newcommand\norm{{\mathcal N}}
\newcommand\hnorm{{\mathcal M}}
\newcommand\energy{{\mathcal H}}
\newcommand\higher{{\mathcal K}}
\newcommand\goto{\mathop{\longrightarrow}}
\newcommand\Loc{{\mathrm{loc}}}
\newcommand\Map{ \Lambda }
\newcommand\MScN[1]{\href{http://www.ams.org/mathscinet-getitem?mr=#1}{\nolinkurl{(#1)}}}
\newcommand\DOI[1]{\href{http://dx.doi.org/#1}{(doi: \nolinkurl{#1})}}
\newcommand\LINK[1]{\href{#1}{(link: \nolinkurl{#1})}}
\title[Prescribed blow-up surface for NLW]{Solutions with prescribed local blow-up surface for the nonlinear wave equation}
\author[T. Cazenave]{Thierry Cazenave$^1$}
\email{\href{mailto:thierry.cazenave@sorbonne-universite.fr}{thierry.cazenave@sorbonne-universite.fr}}
\author[Y. Martel]{Yvan Martel$^{2}$}
\email{\href{mailto:yvan.martel@polytechnique.edu}{yvan.martel@polytechnique.edu}}
\author[L. Zhao]{Lifeng Zhao$^3$}
\email{\href{mailto:zhaolf@ustc.edu.cn}{zhaolf@ustc.edu.cn}}
\thanks{L. Zhao was partially supported by the NSFC Grant of China No. 11771415}
\address{$^1$Sorbonne Universit\'e, CNRS, Universit\'e de Paris, Laboratoire Jacques-Louis Lions,
B.C. 187, 4 place Jussieu, 75252 Paris Cedex 05, France}
\address{$^2$CMLS, \'Ecole Polytechnique, CNRS, 91128 Palaiseau Cedex, France}
\address{$^3$Wu Wen-Tsun Key Laboratory of Mathematics and School of Mathematical Sciences, University of Science and Technology of China, Hefei 230026, Anhui, China}
\subjclass[2010]{Primary 35L05; secondary 35B44, 35B40}
\keywords{nonlinear wave equation, finite-time blowup, blow-up surface}
\begin{document}

\begin{abstract}
We prove that any sufficiently differentiable space-like hypersurface of ${\mathbb R}^{1+N} $ coincides locally around any of its points with the blow-up surface of a  finite-energy solution of the focusing nonlinear wave equation $\partial_{tt} u - \Delta u=|u|^{p-1} u$ on ${\mathbb R} \times {\mathbb R} ^N$,
for any $1\leq N\leq 4$ and  $1 < p \le \frac {N+2} {N-2}$.
We follow the strategy developed in our previous work~\cite{CaMaZhwave1} on the construction of solutions of the nonlinear wave equation blowing up at any prescribed compact set. Here to prove blowup on a local space-like hypersurface, we first apply a change of variable to reduce the problem to blowup on a small ball at $t=0$ for a transformed equation. The construction of an appropriate approximate solution is then combined with an energy method for the existence of a solution of the transformed problem that blows up at $t=0$.
To obtain a finite-energy solution of the original problem from trace arguments, we need to work with $H^2\times H^1$ solutions for the transformed problem.
\end{abstract}
\maketitle

\begin{center} 
\it Dedicated to Laurent V\'eron on the occasion of his 70th birthday
\end{center} 

\tableofcontents

\section{Introduction}
 \subsection{Main result}
We consider the nonlinear energy-subcritical or -critical wave equation 
\begin{equation}\label{wave}
\partial_{tt} u - \Delta u=|u|^{p-1} u,\quad (t,x)\in \R\times\R^N,
\end{equation}
for $N\geq 1$ and $1< p\le  \frac {N+2} {N-2}$ ($1<p<\infty $ if $N=1,2$).
For simplicity, we restrict ourselves to space dimensions $1\leq N\leq 4$.
In this case, it is well-known that the Cauchy problem for \eqref{wave} is locally well-posed in the energy space $H^1(\R^N)\times L^2(\R^N)$. (See Remark~\ref{eLinCo5}.)

When a solution $u$ with initial data at $t=t_0$ is not globally defined (\cite{Keller, Levine, Alinhac}), we introduce its \emph{maximal influence domain} whose upper boundary is a $1$-Lipschitz graph.
See \cite[Section~III.2]{Alinhac} and, for the present setting, Section~\ref{S1.3}.

We prove that any sufficiently differentiable space-like hypersurface of $\R^{1+N} $ coincides locally around any of its points with the blow-up surface of a  finite-energy solution of the focusing nonlinear wave equation~\eqref{wave}.
More precisely, our main result is the following.

\begin{thm}\label{TH:2}
Let $1\leq N\leq 4$ and $1< p\le  \frac {N+2} {N-2}$.
Let 
\begin{equation}\label{th2:1}
q_0=2 \left\lfloor \frac{2p+2}{p-1}\right\rfloor+3.
\end{equation}
Let $\varphi:\R^N\to\R$ be a function of class $\cont^{q_0}$ such that
\begin{equation}\label{th2:2}
\varphi(0)=0 \quad \mbox{and}\quad |\nabla \varphi(x)|< 1\mbox{ for all $x\in\R^N$.}
\end{equation}
There exist $\varepsilon >0$, $\tau_0>0$ and $(u_0,u_1)\in H^1(\R^N)\times L^2(\R^N)$ such that 
the upper boundary of the maximal influence domain
 of the solution~$u$ of \eqref{wave} with initial data $(u,\partial_tu)(0)=(u_0,u_1)$ contains the local hypersurface $\{(t,x): t= \tau _0 + \varphi(x)\mbox{ and } |x|<\varepsilon\}$.
Moreover, $u$ blows up on this local hypersurface in the sense that if $ |x_0| < \varepsilon $ and $\sigma \in (  | \nabla \varphi (0)| ,1)$, then
\begin{equation} \label{fBUest} 
\liminf _{ t\uparrow \tau _0+ \varphi (x_0)}  
\frac {1} {\tau _0+ \varphi (x_0) -t } \int _t ^{\tau _0+ \varphi (x_0)} dt' \int  _{ \{ |x-x_0|< \sigma (\tau _0+ \varphi (x_0) -t') \}}  |\partial _t u|^2 dx >0.
\end{equation} 
\end{thm}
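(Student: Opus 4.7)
The plan is to follow the strategy of \cite{CaMaZhwave1} after a preliminary change of variables that straightens the prescribed blow-up hypersurface to a flat one. Introducing the new variable $s = \tau_0 + \varphi(x) - t$ and setting $v(s,x) = u(\tau_0 + \varphi(x) - s, x)$, the equation~\eqref{wave} transforms into
\begin{equation*}
(1 - |\nabla \varphi(x)|^2) \partial_{ss} v - \Delta_x v - 2 \nabla \varphi(x) \cdot \nabla_x \partial_s v - \Delta \varphi(x) \, \partial_s v = |v|^{p-1} v
\end{equation*}
on $\{s > 0\} \times \R^N$, a semilinear wave equation with $x$-dependent principal-part coefficients whose symbol is strictly hyperbolic with $s$ as the timelike direction precisely because $|\nabla\varphi| < 1$. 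The problem then reduces to constructing a solution $v$ of this transformed equation on a slab $\{0 < s < s_0\} \times B_R$ that blows up uniformly on the whole ball $B_R$ at $s = 0$ --- structurally the same problem as \cite{CaMaZhwave1}, but now for an equation with variable coefficients and, as explained below, at a higher regularity level.

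To carry this out I would first build an approximate solution $V_{\mathrm{app}}(s,x)$ as a truncated formal expansion in~$s$, anchored on the ODE blow-up profile. Matching the most singular power of~$s$ in the equation forces the leading term
\[
V_{\mathrm{app}}(s,x) \approx \kappa \bigl(1 - |\nabla \varphi(x)|^2\bigr)^{1/(p-1)} s^{-2/(p-1)}, \qquad \kappa = \Bigl[ \tfrac{2(p+1)}{(p-1)^2} \Bigr]^{1/(p-1)},
\]
to which one adds successive corrections in~$s$ with $x$-dependent coefficients built from $\nabla^2\varphi, \nabla^3\varphi, \dots$, cancelling increasing orders of the residual. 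The integer $q_0$ in~\eqref{th2:1} is dictated by how many such corrections are needed for the residual to vanish at $s = 0$ fast enough for the forthcoming energy estimate to close, which in turn fixes the $\cont^{q_0}$ requirement on~$\varphi$.

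The second stage is an energy estimate for the remainder $w = v - V_{\mathrm{app}}$, which satisfies a linear wave-type equation with the singular potential $p|V_{\mathrm{app}}|^{p-1} \sim C(x) s^{-2}$ together with a controlled nonlinear perturbation and the residual source. I would run this estimate at the $H^2\times H^1$ level, combining the standard multiplier $\partial_s w$ with one and two spatial derivatives, using the coercivity of the quadratic form
\[
(1 - |\nabla\varphi|^2)(\partial_s w)^2 + |\nabla_x w|^2 + 2 \nabla\varphi \cdot \nabla_x w \, \partial_s w
\]
guaranteed by $|\nabla\varphi| < 1$ on $\overline{B_R}$. As in~\cite{CaMaZhwave1}, a Hardy-type compensation between the $s^{-2}$ potential and a singular weight inserted in the multiplier drives a Gronwall argument that forces $w$ to inherit the fast decay of the residual as $s \downarrow 0$, producing a genuine solution $v$ of the transformed equation blowing up on $B_R$ at $s = 0$. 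The upgrade to $H^2 \times H^1$ is essential: it is precisely what allows the trace of $v$ on a spacelike slice $\{s = s_0\}$ to provide $H^1 \times L^2$ initial data for the original equation, after cutting off in $x$ and extending using finite speed of propagation to obtain $(u_0,u_1) \in H^1(\R^N) \times L^2(\R^N)$ at time $t_0 = \tau_0 - s_0$.

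The quantitative lower bound~\eqref{fBUest} then follows from the leading-order asymptotics: since $w$ is small compared to $V_{\mathrm{app}}$ near $s = 0$, we have
\[
\partial_t u(t,x) = -\partial_s v(\tau_0 + \varphi(x) - t, x) \sim \tfrac{2\kappa}{p-1} \bigl(1 - |\nabla \varphi(x)|^2\bigr)^{1/(p-1)} (\tau_0 + \varphi(x) - t)^{-2/(p-1)-1},
\]
and integrating $|\partial_t u|^2$ over the backward cone of aperture $\sigma \in (|\nabla\varphi(0)|,1)$ yields a strictly positive \emph{liminf} (the aperture condition on $\sigma$ guarantees that the cone stays in the region where the approximate profile is valid). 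I expect the main obstacle to be the algebraic bookkeeping required to design $V_{\mathrm{app}}$ in the presence of the $x$-dependent principal part --- each new correction produces a cascade of terms involving derivatives of $\varphi$ that must be matched at the right power of~$s$ --- together with the passage of the energy scheme of~\cite{CaMaZhwave1} from $H^1 \times L^2$ to $H^2 \times H^1$, in which spatial derivatives have to be commuted through the variable-coefficient linearized operator carrying a strongly singular potential.
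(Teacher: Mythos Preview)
Your plan captures the broad strategy of~\cite{CaMaZhwave1}, but it has a real gap: the weighted energy method you describe will not close with only the hypothesis $|\nabla\varphi|<1$.  In the transformed equation the dangerous term is the first-order piece $-2\nabla\varphi\cdot\nabla\partial_s v$.  When you run the energy estimate with a weight $Q$ comparable to $V_{\mathrm{app}}^{p+1}\sim s^{-2(p+1)/(p-1)}$ (this weight, or something equivalent, is forced on you by the singular potential $p|V_{\mathrm{app}}|^{p-1}\sim C s^{-2}$), integration by parts against the multiplier $Q\,\partial_s(Q^{-1/2}w)$ produces a cross term bounded only by
\[
\frac{p+1}{p-1}\,\|\nabla\varphi\|_{L^\infty}\,s^{-1}\,\mathcal N^2,
\]
because $|\partial_s Q|\le \frac{2(p+1)}{p-1}s^{-1}Q$.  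The Gronwall bootstrap can absorb a term of order $s^{-1}\mathcal N^2$ only if its coefficient is strictly smaller than a fixed constant depending on $p$ (in the paper, $\frac{\lambda}{8}$ with $\lambda\le\frac12$).  Coercivity of the static quadratic form $(1-|\nabla\varphi|^2)(\partial_s w)^2+|\nabla w|^2+2\nabla\varphi\cdot\nabla w\,\partial_s w$ does not help here; the obstruction lives in the \emph{time derivative} of the energy.

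The paper fixes this by a two-step reduction you omit.  First, $\varphi$ is localized near $0$ so that $\varphi(x)-\ell x_1$ (with $\ell=|\nabla\varphi(0)|$) is as small as one likes in $C^1$.  Second, a Lorentz boost of speed $\ell$ in the $x_1$-direction removes the linear part $\ell x_1$: in the boosted frame the straightened hypersurface is described by a function $\psi$ with $\|\nabla\psi\|_{L^\infty}$ below any prescribed threshold, and the energy method is run on the doubly-transformed equation.  Going back to the original variables requires composing the two changes of variables, and this is also where the restriction $\sigma>|\nabla\varphi(0)|$ in~\eqref{fBUest} genuinely arises: the image under the Lorentz map of a backward cone of aperture $\sigma$ contains a cone in the $(s,y)$ variables only when $\sigma>\ell$, not because of any issue with the validity region of $V_{\mathrm{app}}$.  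Your explanation of the aperture condition is therefore not the right one.

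A secondary point: the ansatz in the paper is not a power series in $s$ with $x$-dependent coefficients.  The leading term is $V_0(s,x)=\kappa(x)(s+A(x))^{-2/(p-1)}$ with a carefully chosen nonnegative $A$ vanishing on $|x|\le1$ and growing like $|x|^k$ at infinity; this simultaneously localizes the blow-up and provides the spatial decay needed to stay in $H^2$.  The corrections $v_j$ are then obtained by solving linearized ODEs in $s$ with source $\mathcal E_{j-1}$, not by matching Taylor coefficients.
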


It follows from~\eqref{fBUest} that $\partial _tu$ concentrates on the local hypersurface $\{(t,x): t= \tau _0 + \varphi(x)\mbox{ and } |x|<\varepsilon\}$ in the sense of $L^2$. In particular, this local hypersurface is a \emph{blow-up surface} for the solution $u$. 

Compared to previous results (see Section~\ref{s:1.2}), Theorem~\ref{TH:2} applies to any space dimension $N\le 4$ and any subcritical or critical $p$. Moreover, our strategy  is different. It mainly relies on the construction of an ansatz by elementary ODE arguments.  (See Section~\ref{s:1:4}.)

\begin{rem}
In the definition of $q_0$ above, we use the notation $y\mapsto \lfloor y \rfloor$ for the floor function which maps $y$ to the greatest integer less than or equal to $y$.
Note that $q_0=7$ for $p> 5$ and $q_0\to \infty$ as $p\to 1^+$. 
See Remark~\ref{eRems2} for comments on this condition.
\end{rem}

\subsection{Definition of the maximal influence domain}\label{S1.3} 
We adapt the presentation of \cite{Alinhac}, Chapter~III (see also~\cite{Lindblad}) to the framework of $ H^1\times L^2$ solutions
for the energy subcritical or critical wave equation in space dimension $N\ge 1$.
Let 
\[
\R_+^{1+N} =  [0,+\infty)\times \R^N.
\]
For any $(t,x)\in \R_+^{1+N}$, we define the open (in $\R_+^{1+N}$) backward cone 
\begin{equation} \label{defcone} 
C(t,x)=\left\{(s,y)\in \R_+^{1+N}\mbox{ such that } |x-y| < t-s \right\}.
\end{equation} 
\begin{defn}
An open set $\Omega$ of $\R_+^{1+N}$ is called an influence domain if $(t,x)\in \Omega$ implies
$\overline C(t,x)\subset \Omega$.
\end{defn}
For $\Omega$ an influence domain containing $\{0\}\times \R^N$, define for any $x\in \R^N$,
\[
\phi(x)=\sup\left\{ t\geq 0\mbox{ such that } (t,x)\in\Omega\right\}.
\]
From the above definition, either $\phi$ is identically $\infty$, or it is finite for all $x\in \R^N$.
In the latter case, $\phi$ is a $1$-Lipschitz continuous function.
\medskip

 Recall that by the Cauchy theory in the energy space $ H^1 (\R^N ) \times L^2 (\R^N ) $, for any $(u_0,u_1)\in H^1 (\R^N ) \times L^2 (\R^N ) $ there exist $T>0$ and a solution $(u,\partial_tu)$ of~\eqref{wave} belonging to $C([0,T],H^1(\R^N))\cap C([0,T],L^2(\R^N))$. These solutions are unique in that class, except for the 3D critical case $p=5$, where uniqueness is known in $C([0,T],H^1(\R^3))\cap C([0,T],L^2(\R^3)) \cap L^ {8} ( (0,T) \times \R^3  )$.
(See Remark~\ref{eLinCo5} for details.)

From the local Cauchy theory, it is standard to define the notion of maximal solution and  maximal time of 
existence $T_{\rm max}(u_0,u_1)>0$; if $T_{\rm max}(u_0,u_1)=\infty$, the solution is globally defined, otherwise it blows up as   $t\uparrow T_{\rm max}(u_0,u_1)$ (in a suitable norm related to the resolution of the Cauchy problem).

To define the notion of maximal influence domain  corresponding to an initial data
we first extend the Cauchy theory of $\R^N$ to truncated
cones. For $x_0\in\R^N$  and $0\leq \tau\leq R$, we define 
\begin{equation} \label{fTrCo1} 
E(x_0,R,\tau)=\left\{(t,x)\in \R_+^{1+N}\mbox{ such that $0\leq t< \tau$ and $|x-x_0|< R- t $}\right\}.
\end{equation} 
Suppose that $x_0\in\R^N$ and $R>0$, and
let $(u_0,u_1)\in H^1(B(x_0,R))\times L^2(B(x_0,R))$. Consider any extension $(  \widetilde{u} _0,  \widetilde{u} _1)
\in  H^1(\R^N)\times L^2(\R^N)$ of $(u_0,u_1)$ \emph{i.e.} any function satisfying
\[
 \widetilde{ u} _0=u_0\quad \mbox{and}\quad   \widetilde{u} _1=u_1 \quad \text{on $B(x_0,R)$}.
\]
Next,  consider the solution $(  \widetilde{u} ,\partial_t   \widetilde{u} )$ of \eqref{wave} corresponding to the initial data 
$( \widetilde{ u} _0, \widetilde{ u} _1)$  defined on a time interval $[0,  \widetilde{\tau} ]$ where $ \widetilde{\tau} >0$, given by the above Cauchy theory.
Note that if $(\check u_0,\check u_1)
\in H^1(\R^N)\times L^2(\R^N)$ is another extension of $(u_0,u_1)$  and $(\check u,\partial_t \check u)$ is the corresponding solution of \eqref{wave}  on a time interval $[0,\check \tau]$ ($\check\tau>0$) then  by finite speed of propagation (see Proposition~\ref{eLinCo4}),
the two solutions $(  \widetilde{u} ,\partial_t   \widetilde{u} )$ and $(\check u,\partial_t \check u)$ are identically equal
on the truncated cone  $E(x_0,R,\min(   \widetilde{\tau} ,\check\tau))$.
In this way, we have defined a notion of solution of \eqref{wave} on $E(x_0,R, \tau)$ for some $\tau>0$ which is independent of the extension chosen and includes a uniqueness property. \emph{From now on, for any $(u_0,u_1)\in  H^1(B(x_0,R))\times L^2(B(x_0,R))$ and any $\tau>0$, we   refer to the solution of \eqref{wave} on $E(x_0,R, \tau)$ in this sense.}

By time-translation invariance of the equation and  considering the map $(t,x)\in E( x_0, R, \tau ) \mapsto u(t_0 +t, x)$, we extend this definition to any truncated cone in $\R^{1+N}_+ $.

Now, we define the notion of solution in an influence domain.

\begin{defn}\label{def:1}
Let $(u_0,u_1)\in H^1(\R^N)\times L^2(\R^N)$.
Let $\Omega$ be an influence domain.
We say that $(u,\partial_t u)$ is a solution of~\eqref{wave}  on $\Omega$ with initial data $(u_0,u_1)$ if the following
hold.
\begin{enumerate}
\item $u\in  H^1_\Loc (\Omega)$;
\item For any $t_0\geq 0$, $x_0\in \R^N$ and $R>0$ such that $[0,t_0]\times B(x_0,R)\subset \Omega$,
it holds $u_{|[0,t_0]\times B(x_0,R)}\in C([0,t_0], H^1(B(x_0,R)))\cap C^1([t_1,t_2],L^2(B(x_0,R)))$;
moreover, $u(0)\equiv u_0$ and $\partial_t u(0)\equiv u_1$ on $B(x_0,R)$;
\item For any $(t_0,x_0)\in\Omega$ and $R>0$ such that $\{t_0\}\times B(x_0,R)\subset \Omega$,
there exists $0<\tau<R$ such that $(t,x)\in E(x_0,R,\tau)\mapsto u(t_0+t,x)$ is solution of \eqref{wave} in the above sense.
\end{enumerate}
\end{defn}

\begin{defn}
For any $(u_0,u_1)\in  H^1(\R^N)\times L^2(\R^N)$, we denote $\Omega_{\rm max}(u_0,u_1)$ the union of
all the influence domains $\Omega$ such that there exists a solution $(u,\partial_t u)$ with initial data
$(u_0,u_1)$ on $\Omega$ in the sense of Definition $\ref{def:1}$.
\end{defn}

It follows that, for any initial data $(u_0,u_1)\in  H^1(\R^N)\times L^2(\R^N)$, $\Omega_{\rm max}(u_0,u_1)$ is the maximal influence domain on which a (unique) solution of~\eqref{wave}  with initial data $(u_0,u_1)$ exists.
Finally, in the case $T_{\rm max}(u_0,u_1)<+\infty$  the upper boundary of the maximal influence domain is the graph of the $1$-Lipschitz application
 \[
x\in \R^N \mapsto \phi(x)=\sup\left\{ t\geq 0\mbox{ such that } (t,x)\in\Omega_{\rm max}(u_0,u_1)\right\}\in (0,\infty).
\]

\subsection{Previous results}\label{s:1.2}

Under certain assumptions, it is known that the upper boundary of the maximal influence domain is a blow-up surface in the sense that the solution blows up (at the same rate as the ODE) on the surface, and the blow-up surface is $C^1$. See~\cite{CaFr1, CaFr2} and~\cite[Chapter~III]{Alinhac}. 
See also~\cite{Don-Sch, MerleZ2, MerleZ3} and the references therein for further blow-up results.

Constructing solutions of the wave equation \eqref{wave} with prescribed blow-up surface is a classical question.
Results similar to Theorem~\ref{th2:1} have been proved in several cases. For the wave equation with cubic nonlinearity, it is proved in~\cite[Theorem~10.14, p.~192]{Kibook} that there exist solutions (locally defined around the blow-up surface) blowing up exactly on a prescribed surface of class $H^r (\R^N )$ with $r> \frac {N} {2} + 7$. 
In~\cite[Theorem~1.1]{KiVi}, an analogous result is proved in space dimension $1$ for equation~\eqref{wave}  for any $p>1$. For previous results, see~\cite{Alinhac, KiLi1, KiLi2, Kiche1,Kiche2}.

A related question is the study of the blow-up set, which is the intersection of the blow-up surface with the hyperplane $\{t= T_{\max } \}$. 
In~\cite[Corollary~1.2]{KiVi}, it is proved for~\eqref{wave} in space dimension $1$  that, given any compact subset $K$ of $\R$, there exist smooth initial data for which the blow-up set is precisely $K$. This result is extended in~\cite[Theorem~1.1]{CaMaZhwave1} to any space dimension and any energy-subcritical $p$. 
See~\cite{Kicontrol} for a related result. 

\subsection{Strategy of the proof of Theorem~$\ref{TH:2}$} \label{s:1:4} 

We follow closely the strategy of~\cite{CaMaZhwave1} (see also~\cite{CaMaZh}). It is based on the construction of an appropriate approximate solution which blows up at $t=0$, combined with an energy method for the existence of an exact solution that also blows up at $t=0$. 
Here, we wish to prove blowup on a local space-like hypersurface. In order to apply the previously recalled strategy, we therefore apply a change of variable to reduce the problem to blowup at $t=0$ (Section~\ref{ssec:2:1}). 
By doing so, we are led to study the transformed equation 
\begin{equation*}
(1-|\nabla \psi|^2) \partial_{ss} v - 2 \nabla \psi\cdot \nabla \partial_s v
-(\Delta \psi) \partial_s v - \Delta v = f(v)
\end{equation*}
in the dual variables $(s, y)\in \R \times \R^N $. 
The construction of an appropriate ansatz for this equation (Sections~\ref{s22} and~\ref{s23}) is similar to the construction made in~\cite{CaMaZhwave1}. 
In particular, it is based on elementary ODE arguments. 
The energy method for this transformed equation requires a smallness condition on $  \| \nabla \psi \| _{ L^\infty  }$, and yields an existence time that depends on $\psi $.
See Section~\ref{sec:3}. 
This smallness condition can be met through a  localization argument (Section~\ref{sAdjustment}) and a Lorentz transform (Sections~\ref{sConsPsi}-\ref{s:4:4}). 
Going back to the original variables, to obtain a solution in the framework of $H^1 \times L^2$, we are forced to apply a trace argument which requires higher regularity of the solution $v$ (Section~\ref{s:4:5}). This is why we use the energy method for $v$ in the framework of $H^2 \times H^1$. 
The restriction $1\leq N\leq 4$ implies that $H^2 \hookrightarrow L^q$ for every $2\le q<\infty $, which simplifies the energy argument. The blow-up estimate~\eqref{fBUest} is a consequence of an ODE blow-up estimate for the solution of the transformed equation, and the change of variable (Section~\ref{s:4:6}).

\subsection{Notation}
We fix  a smooth, even function  $\chi:\R\to\R$
satisfying:
\begin{equation}\label{def:chi}
\hbox{$\chi\equiv 1$ on $[0,1]$, $\chi\equiv 0$ on $[2,+\infty)$ and $\chi'\leq 0\leq \chi\leq 1$ on $[0,+\infty)$.}
\end{equation}

Let $f(u)=  |u|^{p-1} u $ and $F(u)= \int _0^u f(v)\, dv$.
For future reference, we state and justify two Taylor formulas involving the functions $F$ and $f$
(see Introduction of \cite{CaMaZh} for proofs). 
Let $\bar p = \min(2,p)$.
For any $u>0$ and any $v$, it holds
\begin{align}\label{taylor0}
\Big|F(u+v)-F(v)-F'(u)v-\frac 12F''(u)v^2\Big|
&\lesssim |v|^{p+1}+u^{p-\bar p} |v|^{\bar p+1},
\\\label{taylor1} 
| ( f(u+v)- f( u )- f'(u)v) v |
&\lesssim |v|^{p+1}+u^{p-\bar p} |v|^{\bar p+1},
\\ \label{taylor10} 
|f'(u+v)-f'(u)|&\lesssim |u|^{-1} |v|^p+|u|^{p-2}|v|.
\end{align} 
and
\begin{equation}\label{taylor}
\Big|f(u+v)-f(u)-f'(u)v-\frac 12f''(u)v^2\Big|\lesssim u^{-1}|v|^{p+1}+u^{p-\bar p-1}|v|^{\bar p+1}.
\end{equation}

In the present article, we use multi-variate notation and results from \cite{CoSa}.
For $\beta=(\beta_1,\ldots,\beta_N)\in \N^N$ and $x=(x_1,\ldots,x_N)\in \R^N$, we set
\begin{align*}
&|\beta|=\sum_{j=1}^N \beta_j,\quad \beta!=\prod_{j=1}^N (\beta_j!),\\
& x^\beta=\prod_{j=1}^N x_j^{\beta_j},\quad \partial_x^{\beta} = \frac{\partial^{|\beta|}}{\partial_{x_1}^{\beta_1}\ldots\partial_{x_N}^{\beta_N}}
\quad\mbox{for $|\beta|>0$,}\quad \partial_x^0=\mathop{\textnormal{Id}}.
\end{align*}
For $\beta,\beta'\in \N^N$, we write
$\beta'\leq \beta$ provided $\beta_j'\leq \beta_j$, for all $j=1,\ldots,N$. 
Note that in this case $ |\beta -\beta '| =  |\beta | -  |\beta '|$.
For $\beta'\leq \beta$, we denote
\[
\binom{\beta}{\beta'}=\prod_{j=1}^N\binom{\beta_j}{\beta_j'} = \frac{\beta!}{(\beta'!)(\beta-\beta')!}.
\]
Recall that for two functions $g,h:\R^{1+N}\to \R$, Leibniz's formula writes:
\begin{equation}\label{lbz0}
\partial_x^\beta \left(gh\right)=
\sum_{\beta'\leq\beta}\binom{\beta}{\beta'}\left(\partial^{\beta'}g\right)\left(\partial^{\beta-\beta'}h\right).
\end{equation}

We write $\beta'\prec\beta$ provided one of the following holds
\begin{itemize}
\item $|\beta'|<|\beta|$;
\item $|\beta'|=|\beta|$ and $\beta_1'<\beta_1$;
\item $|\beta'|=|\beta|$, $\beta_1'=\beta_1$,\ldots, $\beta_ \ell '=\beta_ \ell $ and
$\beta_{\ell +1}'<\beta_{ \ell +1}$ for some $1\leq  \ell <N$.
\end{itemize}
We recall the Faa di Bruno formula (see Corollary~2.10 in~\cite{CoSa}).
Let $n=|\beta|\geq 1$. Then, for functions $f:\R\to \R$ and $g:\R^{1+N}\to \R$,
\begin{equation}\label{fdb0}
\partial_x^{\beta} (f\circ g)=
\sum_{r=1}^n \left(f^{(r)}\circ g\right)\sum_{P(\beta,r)}(\beta!)
\prod_{ \ell =1}^n \frac{\left(\partial_x^{\beta_ \ell }g\right)^{r_ \ell }}{(r_ \ell !)(\beta_ \ell !)^{r_ \ell }}
\end{equation}
where
\begin{align*}
P(\beta,r)
&=\Big\{(r_1,\ldots,r_n;\beta_1,\ldots,\beta_n) : \mbox{there exists $1\leq m\leq n$ such that}\\
&\qquad r_ \ell =0 \mbox{ and } \beta_ \ell =0 \mbox{ for $1\leq  \ell \leq n-m$} ;\, r_ \ell >0 \mbox{ for $n-m+1\leq  \ell \leq n$};\\
&\qquad \mbox{and } 0\prec\beta_{n-m+1}\prec\cdots\prec\beta_n \mbox{ are such that }
\sum_{ \ell =1}^n r_ \ell =r,\ \sum_{ \ell =1}^n r_ \ell  \beta_ \ell  = \beta \Big\}.
\end{align*}

We will also need to differentiate in space and time, so we define multi-index notation in space-time:
$\nub=(\alpha,\beta_1,\ldots,\beta_N)\in \N^{1+N}$, $\beta=(\beta_1,\ldots,\beta_N)$, and 
\begin{equation*}
|\nub|=\alpha+|\beta|,\quad \nub!=\alpha ! \beta!,\quad
\partial^{\nub} = \partial_s^\alpha \partial_x^\beta.
\end{equation*}
For $\nub,\nub'\in \N^{1+N}$, we write
$\nub'\leq \nub$ provided $\alpha'\leq \alpha$ and $\beta_j'\leq \beta_j$, for all $j=1,\ldots,N$. In such a case, we denote
\[
\binom{\nub}{\nub'}= \binom{\alpha}{\alpha'}\binom{\beta}{\beta'}.
\]
Then, for two functions $g,h:\R^{1+N}\to \R$:
\begin{equation}\label{lbz}
\partial^\nub\left(gh\right)=
\sum_{\nub'\leq\nub}\binom{\nub}{\nub'}\left(\partial^{\nub'}g\right)\left(\partial^{\nub-\nub'}h\right).
\end{equation}

We write $\nub'\prec\nub$ provided one of the following holds
\begin{itemize}
\item $|\nub'|<|\nub|$;
\item $|\nub'|=|\nub|$ and $\alpha'<\alpha$;
\item $|\nub'|=|\nub|$, $\alpha'=\alpha$ and $\beta_1'<\beta_1$; or
\item $|\nub'|=|\nub|$, $\alpha'=\alpha$, $\beta_1'=\beta_1$,\ldots, $\beta_ \ell '=\beta_ \ell $ and
$\beta_{ \ell +1}'<\beta_{ \ell +1}$ for some $1\ \leq  \ell <N$.
\end{itemize}
Last, we write in this context the Faa di Bruno formula.
Let $n=|\nub|\geq 1$. Then, for functions $f:\R\to \R$ and $g:\R^{1+N}\to \R$,
\begin{equation}\label{fdb}
\partial^{\nub} (f\circ g)=
\sum_{r=1}^n \left(f^{(r)}\circ g\right)\sum_{P(\nub,r)}(\nub!)
\prod_{ \ell =1}^n \frac{\left(\partial^{\nub_ \ell }g\right)^{r_ \ell }}{(r_ \ell !)(\nub_ \ell !)^{r_ \ell }}
\end{equation}
where
\begin{align*}
P(\nub,r)
&=\Big\{(r_1,\ldots,r_n;\nub_1,\ldots,\nub_n) : \mbox{there exists $1\leq m\leq n$ such that}\\
&\qquad r_ \ell =0 \mbox{ and } \nub_ \ell =0 \mbox{ for $1\leq  \ell \leq n-m$} ;\, r_ \ell >0 \mbox{ for $n-m+1\leq  \ell \leq n$};\\
&\qquad \mbox{and } 0\prec\nub_{n-m+1}\prec\cdots\prec\nub_n \mbox{ are such that }
\sum_{ \ell =1}^n r_ \ell =r,\ \sum_{ \ell =1}^n r_ \ell  \nub_ \ell  = \nub \Big\}.
\end{align*}

\section{Blow up ansatz}\label{sec:2}

\subsection{Change of variables} \label{ssec:2:1} 
Let $\psi \in \cont^{q_0} (\R^N ,\R)$, where $q_0 $ is defined by~\eqref{th2:1}, be such that for some $R\geq 2$,
\begin{equation}\label{on:psi}
\mbox{$\psi (x)= 0$ for $|x|\ge R$ and $ \| \nabla \psi \| _{ L^\infty } < 1.$}
\end{equation} 
We perform a change of variable related to $\psi$:
\begin{equation*}
u(t,x)=v(s,x),\quad s=\psi(x)-t
\end{equation*}
so that $ s>0$ is equivalent to $t<\psi(x)$.

Then, the following holds, for $j=1,\ldots,N$,
\begin{align*}
\partial_{tt} u &= \partial_{ss} v,\\
\partial_{x_j} u & = (\partial_{x_j} \psi) \partial_s v + \partial_{x_j} v,\\
\partial_{x_jx_j} u & = (\partial_{x_jx_j} \psi) \partial_s v + (\partial_{x_j}\psi)^2 \partial_{ss}v
+2 (\partial_{x_j}\psi) \partial_{x_js } v + \partial_{x_jx_j} v,\\
\Delta u &= (\Delta \psi)\partial_s v + |\nabla \psi|^2 \partial_{ss} v +2 \nabla \psi \cdot \nabla\partial_s v+\Delta v.
\end{align*}
Therefore, the equation \eqref{wave} on $u(t,x)$ rewrites
\begin{equation}\label{eq:v}
(1-|\nabla \psi|^2) \partial_{ss} v - 2 \nabla \psi\cdot \nabla \partial_s v
-(\Delta \psi) \partial_s v - \Delta v = f(v).
\end{equation}
In this section, we focus on finding ansatz for this equation under assumption~\eqref{on:psi}.

\subsection{First blow up ansatz}\label{s22}
Let 
\begin{equation}\label{defJ}
J=\left\lfloor \frac{2p+2}{p-1}\right\rfloor\quad\mbox{so that}\quad q_0= 2J+3,
\end{equation}
where $q_0 $ is defined by~\eqref{th2:1}, and let 
\begin{equation}\label{onqk}
k\geq q_0 +1
\end{equation}
 be an integer. 

We consider the function $A:\R^N\to[0,+\infty[$ given by
\begin{equation} \label{fExfnA} 
A(x):=
\begin{cases} 
0& \hbox{if $|x|\leq 1$}\\
(|x|-\chi(x))^k&\hbox{if $1< |x|\leq 2$}\\
|x|^k&\hbox{if $|x|>2$} .
\end{cases}
\end{equation}
It follows that $A$ is of class $\cont^{k-1}$ and that, for any $\beta\in \N^N$, with 
$|\beta|\leq k-1$, 
\begin{equation}\label{on:A}
\left.
\begin{aligned}
\hbox{on $\R^N$, } A\geq 0 \hbox{ and }
|\partial_x^\beta A|\lesssim A^{1-\frac {|\beta|}{k}}\\
\hbox{for any $x\in\R^N$ such that $|x|\geq 2$, } A(x)=|x|^k
\end{aligned}\right\} .
\end{equation}

We define a basic blow up ansatz $V_0$, for $s>0$ and $x\in \R^N$,
\begin{equation}\label{def:V0}
V_0(s,x)=\kappa(x) (s+A(x))^{-\frac 2{p-1}},
\end{equation}
where
\begin{equation*} 
 \kappa(x)= \kappa _0\left(1-|\nabla \psi(x)|^2\right)^{\frac 1{p-1}}, 
\quad \kappa_0 =\left[\frac{2(p+1)}{(p-1)^2} \right]^{\frac 1{p-1}},
\end{equation*} 
which satisfies $(1-|\nabla \psi|^2) \partial_{ss} V_0 =V_0^p$ on $(0,+\infty)\times\R^N$.
Since the functions $\psi$ and $A$ are of class $\cont^{q_0}$, we remark that the function $V_0$ is of class $\cont^\infty $ in the variable $s>0$ and of class $\cont^{q_0 -1}$ in the variable $x \in \R^N $.

In view of \eqref{eq:v}, it is natural to set
\begin{align}
\Ens_0
&=-(1-|\nabla \psi|^2)\partial_{ss}V_0 +2 \nabla \psi\cdot \nabla \partial_s V_0 
+(\Delta \psi) \partial_s V_0 + \Delta V_0+f(V_0)\nonumber\\
&= 2 \nabla \psi\cdot \nabla \partial_s V_0
+(\Delta \psi) \partial_s V_0 + \Delta V_0.\label{def:E0}
\end{align}
We gather in the next lemma the properties of $V_0$ and $\Ens_0$.

\begin{lem}\label{le:0}
The function $V_0$ satisfies
\begin{equation}\label{eq:V}
(1-|\nabla \psi|^2)^{\frac 12} \partial_s V_0 = - \left(\frac 2{p+1}V_0^{p+1}\right)^{\frac 12},\quad
(1-|\nabla \psi|^2) \partial_{ss} V_0 =V_0^p.
\end{equation}
Moreover, for any $\alpha\in \N$, $\beta\in \N^N$, $\rho\in \R$, $0<s<1$, $x\in \R^N$, the following hold:
\begin{enumerate}
\item If $0\leq|\beta|\leq q_0 -1$ and $|x|\leq R$, then
\begin{equation}\label{V1}
|\partial_s^\alpha\partial_x^\beta(V_0^\rho)|\lesssim V_0^{\rho+ (\alpha+\frac{|\beta|}k)\frac{p-1}2};
\end{equation}
\item If $|\beta|\leq q_0- 3$ and $|x|\leq R$, then
\begin{equation}\label{V2}
|\partial_s^\alpha\partial_x^\beta \Ens_0|\lesssim V_0^{\frac{p+1}2+(\alpha+\frac{1+|\beta|}k)\frac{p-1}2};
\end{equation}
\item If $|x|> R$, then
\begin{align}
|\partial_s^\alpha\partial_x^\beta V_0|&\lesssim |x|^{-(\frac {2}{p-1}+\alpha)k-|\beta|};\label{V3}
\\ \label{V4}
|\partial_s^\alpha\partial_x^\beta \Ens_0|&\lesssim |x|^{-(\frac {2}{p-1}+\alpha)k-|\beta|-2}.
\end{align}
\end{enumerate}
Furthermore if $  |x_0| < 1$,  then for any $\sigma >0$, 
\begin{equation} \label{dtVrsigma}
\liminf _{ s\downarrow 0 } s^{\frac {N+2- (N-2)p} {2(p-1)}} \|\partial_s V_0(s)\|_{L^2(|x-x_0|< \sigma s)} >0.
\end{equation} 
\end{lem}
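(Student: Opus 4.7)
The identities in \eqref{eq:V} follow from routine ODE computations: direct differentiation of $V_0 = \kappa(x)(s+A(x))^{-2/(p-1)}$ in $s$, combined with the specific value of $\kappa_0$, gives $(1-|\nabla\psi|^2)\partial_{ss}V_0 = V_0^p$, and the first identity in \eqref{eq:V} is the signed square root of the second since $V_0>0$ and $\partial_s V_0<0$.

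For \eqref{V1} the strategy is to reduce the estimate to the cleanly scaled factor $(s+A)^{-2\rho/(p-1)}$. I write $V_0^\rho = \kappa^\rho(s+A)^{-2\rho/(p-1)}$; the factor $\kappa^\rho$ is bounded above and below in $\cont^{q_0-1}$ since $\|\nabla\psi\|_{L^\infty}<1$ forces $1-|\nabla\psi|^2\ge 1-\|\nabla\psi\|_{L^\infty}^2>0$. Time differentiation trivially contributes factors $(s+A)^{-\alpha}$; for spatial differentiation I apply Leibniz~\eqref{lbz0} to peel off $\kappa^\rho$ and Faa di Bruno~\eqref{fdb0} to the composition $(s+A)^{-2\rho/(p-1)}$. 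The key input is~\eqref{on:A}, which yields $|\partial^{\beta_\ell}A|\lesssim A^{1-|\beta_\ell|/k}\le (s+A)^{1-|\beta_\ell|/k}$; combined with the multi-index identities $\sum_\ell r_\ell=r$ and $\sum_\ell r_\ell|\beta_\ell|=|\beta|$, every Faa di Bruno term telescopes to
\[
|\partial_s^\alpha\partial_x^\beta(s+A)^{-2\rho/(p-1)}|\lesssim (s+A)^{-2\rho/(p-1)-\alpha-|\beta|/k}.
\]
Converting back through $(s+A)^{-1}=(V_0/\kappa)^{(p-1)/2}$ and using the lower bound $V_0\gtrsim 1$ on $\{0<s<1,\,|x|\le R\}$ (immediate from $s+A\lesssim 1$ there) produces \eqref{V1}.

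Estimate \eqref{V2} follows by plugging \eqref{V1} into the three terms of $\Ens_0=2\nabla\psi\cdot\nabla\partial_sV_0+(\Delta\psi)\partial_sV_0+\Delta V_0$ via Leibniz~\eqref{lbz}; since $\psi$ does not depend on $s$, only the $\alpha'=0$ terms in the Leibniz sum survive. The largest exponent among the three terms comes from $\nabla\psi\cdot\nabla\partial_sV_0$ and matches exactly the target $(p+1)/2+(\alpha+(1+|\beta|)/k)(p-1)/2$, while the other two yield strictly smaller exponents, absorbed by the lower bound on $V_0$. The constraint $|\beta|\le q_0-3$ is exactly what keeps all spatial orders (at most $|\beta|+2$) within the $\le q_0-1$ range of validity of~\eqref{V1}. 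For \eqref{V3}--\eqref{V4}, on $|x|>R$ one has $\psi\equiv 0$, $\kappa\equiv\kappa_0$, $A(x)=|x|^k$, and $\Ens_0=\Delta V_0$; moreover $|x|^k\ge R^k\ge 2^k>s$ gives $s+|x|^k\sim|x|^k$. The same Faa di Bruno computation with $|\partial^\gamma|x|^k|\lesssim|x|^{k-|\gamma|}$ replacing~\eqref{on:A} yields~\eqref{V3}, and~\eqref{V4} follows by applying this with two additional spatial derivatives to $\Delta V_0$.

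Finally, for \eqref{dtVrsigma}, fix $|x_0|<1$. When $s<(1-|x_0|)/\sigma$ the ball $\{|x-x_0|<\sigma s\}$ lies in $\{|x|<1\}$, where $A\equiv 0$, so $\partial_sV_0(s,x)=-\tfrac{2}{p-1}\kappa(x)\,s^{-(p+1)/(p-1)}$ there. Using $\kappa\gtrsim 1$ and $|B(x_0,\sigma s)|\sim s^N$ gives $\|\partial_sV_0(s)\|^2_{L^2(|x-x_0|<\sigma s)}\gtrsim s^{N-2(p+1)/(p-1)}$, and the arithmetic identity $N-\tfrac{2(p+1)}{p-1}+\tfrac{N+2-(N-2)p}{p-1}=0$ shows that multiplying by the (squared) weight $s^{(N+2-(N-2)p)/(p-1)}$ yields a strictly positive constant, proving the liminf is positive. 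The main technical obstacle is simply the multi-index bookkeeping in \eqref{V1}--\eqref{V2}; once one recognises the scaling $|\partial^\gamma A|\lesssim(s+A)^{1-|\gamma|/k}$ and the conversion identity $(s+A)^{-1}=(V_0/\kappa)^{(p-1)/2}$ as the only structural facts needed, the rest is exponent arithmetic and checking that all constraints on $|\beta|$ remain compatible with the regularity of $A$ and~$\psi$.
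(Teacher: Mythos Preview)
Your proof is correct and follows essentially the same approach as the paper: direct computation for~\eqref{eq:V}, Leibniz plus Fa\`a di Bruno together with the scaling property~\eqref{on:A} for~\eqref{V1}, then plugging~\eqref{V1} into the three terms of $\Ens_0$ for~\eqref{V2}, and the simplification $\psi\equiv 0$, $A=|x|^k$ on $\{|x|>R\}$ for~\eqref{V3}--\eqref{V4}. The only notable difference is that for~\eqref{V1} the paper first proves the case $\rho=1$ and then applies Fa\`a di Bruno a second time to the composition $z\mapsto z^\rho$ with $V_0$, whereas you go directly to $V_0^\rho=\kappa^\rho(s+A)^{-2\rho/(p-1)}$ and run a single Fa\`a di Bruno on $h_\rho(W)=(s+A)^{-2\rho/(p-1)}$; this is a minor streamlining, not a different strategy.
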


\begin{proof} 
First, we observe that the function $\kappa $ is constant for $|x|>R$ and satisfies $\kappa\gtrsim 1$, $|\partial_x^{\beta} \kappa|\lesssim 1$ on $\R^N$, for any $|\beta|\leq q_0-1$.

\smallskip

Proof of \eqref{eq:V}. This follows from direct computations.

\smallskip

Proof of \eqref{V1}.
For $0<s<1$ and $|x|\leq R$, one has $0<s+A(x)\lesssim 1$ and thus, $V_0\gtrsim 1$.
We introduce some notation:
\[
h(z)=z^{-\frac 2{p-1}} \mbox{ for $z>0$},\quad W(s,x)=s+A(x).
\]
In particular, $V_0(s,x)=\kappa(x)h(W(s,x))$.
Let $\alpha\geq 0$. Since $|h^{(\alpha)}(z)|\lesssim |z|^{-\frac 2{p-1}-\alpha}$, we have
\begin{equation*}
\partial_s^\alpha V_0=\kappa(x)h^{(\alpha)}(W(s,x))\quad\mbox{and so}\quad
|\partial_s^\alpha V_0|\lesssim |V_0|^{1+\alpha\frac{p-1}2}.
\end{equation*}
Let $\beta\in \N^N$ be such that $1\leq|\beta|\leq q_0 -1$. Using \eqref{lbz0}, we have
\begin{equation*}
\partial_s^\alpha\partial_x^\beta V_0=
\sum_{\beta'\leq\beta}\binom{\beta}{\beta'}\left(\partial_x^{\beta-\beta'}\kappa\right)
\left(\partial_x^{\beta'}\left[h^{(\alpha)}(W)\right]\right).
\end{equation*}
For $0\leq\beta'\leq\beta$, it holds $|\partial_x^{\beta-\beta'}\kappa|\lesssim 1$.
Thus, for $\beta'=0$ in the above sum, we have
\[
|\left(\partial_x^{\beta}\kappa\right) h^{(\alpha)}(W)|
\lesssim|V_0|^{1+\alpha\frac{p-1}2}.
\]
For $1\leq|\beta'|$, $\beta'\leq\beta$, setting $n'=|\beta'|$ and using \eqref{fdb0},
\begin{equation*}
\partial_x^{\beta'}\left[h^{(\alpha)}(W)\right]=
\sum_{r=1}^{n'} \left[h^{(\alpha+r)}(W)\right]\sum_{P(\beta',r)}(\beta'!)
\prod_{l=1}^{n'} \frac{\left(\partial_x^{\beta_l}W\right)^{r_l}}{(r_l!)(\beta_l!)^{r_l}}
\end{equation*}
where
\begin{align*}
P(\beta',r)
&=\Big\{(r_1,\ldots,r_n;\beta_1,\ldots,\beta_{n'}) : \mbox{there exists $1\leq m\leq n'$ such that}\\
& r_ \ell =0 \mbox{ and } \beta_ \ell =0 \mbox{ for $1\leq  \ell \leq n'-m$} ;\, r_ \ell >0 \mbox{ for $n'-m+1\leq  \ell \leq n'$};\\
& \mbox{and } 0\prec\beta_{n'-m+1}\prec\cdots\prec\beta_{n'} \mbox{ are such that }
\sum_{ \ell =1}^{n'} r_ \ell =r,\ \sum_{ \ell =1}^{n'} r_ \ell  \beta_ \ell  = \beta' \Big\}.
\end{align*}
As before, we use for $r\geq 1$, $|h^{(\alpha+r)}(W)|\lesssim W^{-\frac 2{p-1}-r-\alpha}$.
Moreover, using the assumption \eqref{on:A} on $A$, we have, for $1\leq|\beta_ \ell |\leq q_0 -1$,
\[|\partial_x^{\beta_ \ell }W|\lesssim |\partial_x^{\beta_ \ell }A|\lesssim A^{1-\frac{|\beta_ \ell |}{k}}.\]
Since $\sum_{ \ell =1}^{n'} r_ \ell =r$, $\sum_{ \ell =1}^{n'} r_ \ell  |\beta_ \ell |= |\beta'|$ and $|\beta'|\leq q_0-1 \leq k-1$, we obtain
\begin{align*}
\left|\partial_x^{\beta'}\left[h^{(\alpha)}(W)\right]\right|
&\lesssim\sum_{r=1}^{n'}W^{-\frac2{p-1}-r-\alpha}\sum_{P(\beta',r)}\left[A^{1-\frac{|\beta_ \ell |}k}\right]^{r_ \ell }
\\& \lesssim \sum_{r=1}^{n'}W^{-\frac2{p-1}-r-\alpha}A^{r-\frac{|\beta'|}{k}}
\lesssim W^{-\frac2{p-1}-\alpha-\frac{|\beta'|}{k}}\lesssim V_0^{1+(\alpha+\frac{|\beta'|}k)\frac{p-1}2}.
\end{align*}
We obtain, for all 
$0\leq |\beta|\leq q_0 -1$ and $|x|\leq R$,
\begin{equation}\label{V1bis}
|\partial_s^\alpha\partial_x^\beta V_0|\lesssim V_0^{1+(\alpha+\frac{|\beta|}k)\frac{p-1}2},
\end{equation}
which proves \eqref{V1} for $\rho=1$.

We use the notation $\nub=(\alpha,\beta_1,\ldots,\beta_N)$ as in the context of formula \eqref{fdb}.
Let $n=|\nub|\geq 1$. Then, by \eqref{fdb}, for $\rho\in \R$,
\begin{equation*}
\partial^{\nub}( V_0^{\rho})=
\sum_{r=1}^n \left[\rho\cdots(\rho-r+1)\right]V_0^{\rho-r}\sum_{P(\nub,r)}(\nub!)
\prod_{ \ell =1}^n \frac{\left(\partial^{\nub_ \ell }V_0\right)^{r_ \ell }}{(r_ \ell !)(\nub_ \ell !)^{r_ \ell }}
\end{equation*}
where
\begin{align*}
P(\nub,r)
&=\Big\{(r_1,\ldots,r_n; \nub_1,\ldots,\nub_n) : \mbox{there exists $1\leq m\leq n$ such that}\\
&\qquad r_ \ell =0 \mbox{ and } \nub_ \ell =0 \mbox{ for $1\leq  \ell \leq n-m$} ;\, r_ \ell >0 \mbox{ for $n-m+1\leq  \ell \leq n$};\\
&\qquad \mbox{and } 0\prec\nub_{n-m+1}\prec\cdots\prec\nub_n \mbox{ are such that }
\sum_{ \ell =1}^n r_ \ell =r,\ \sum_{ \ell =1}^n r_ \ell  \nub_ \ell  = \nub \Big\}.
\end{align*}
Using \eqref{V1bis} and $\sum_{ \ell =1}^n r_ \ell =r$, $\sum_{ \ell =1}^n r_ \ell \alpha_ \ell =\alpha$,
$\sum_{ \ell =1}^n r_ \ell \beta_ \ell =\beta$ in $P(\nub,r)$, we estimate
\begin{align*}
|\partial^{\nub}( V_0^{\rho})|&\lesssim
\sum_{r=1}^n V_0^{\rho-r}\sum_{P(\nub,r)}
\prod_{ \ell =1}^n V_0^{r_ \ell \left[ 1+(\alpha_ \ell +\frac{|\beta_ \ell |}{k})\frac{p-1}2\right]}\\
&\lesssim\sum_{r=1}^n V_0^{\rho-r}V_0^{r+(\alpha+\frac{|\beta|}{k})\frac{p-1}2}\lesssim V_0^{\rho+(\alpha+\frac{|\beta|}{k})\frac{p-1}2}.
\end{align*}

\smallskip

Proof of \eqref{V2}.
We estimate the three terms in \eqref{def:E0}.
It follows from Leibniz's formula \eqref{lbz}, the properties of $\psi$, 
$V_0\gtrsim 1$, and estimate \eqref{V1} that, for $|\beta|\leq q_0 -3$, and $|x|\leq R$,
\begin{align*}
|\partial_s^\alpha\partial_x^\beta[\nabla \psi\cdot \nabla \partial_s V_0]|
&\lesssim V_0^{1+(1+\alpha+\frac{1+|\beta|}k)\frac{p-1}2} ,
\\
|\partial_s^\alpha\partial_x^\beta[(\Delta \psi) \partial_s V_0]|
&\lesssim V_0^{1+(1+\alpha+\frac{|\beta|}k)\frac{p-1}2} ,\\
|\partial_s^\alpha\partial_x^\beta [\Delta V_0]|
& \lesssim V_0^{1+(\alpha+\frac{2+|\beta|}k)\frac{p-1}2} .
\end{align*}
Using once more that $V_0\gtrsim 1$ for $|x|\leq R$ and $k\geq 1$, these estimates imply \eqref{V2}.

\smallskip

Proof of \eqref{V3}. It follows from the properties of the functions $\psi$ and $A$ that
$V_0(s,x)= {\kappa _0} (s+|x|^k)^{-\frac{2}{p-1}}$ for any $|x|\geq R$. 
Estimate \eqref{V3} follows immediately. Then, we have, for any $|x|\geq R$,
\begin{gather*}
 |\partial_s^\alpha\partial_x^\beta[\nabla \psi\cdot \nabla \partial_s V_0]|
 =0,
\quad 
|\partial_s^\alpha\partial_x^\beta[(\Delta \psi) \partial_s V_0]|
 =0,\\
 |\partial_s^\alpha\partial_x^\beta [\Delta V_0]|
 \lesssim |x|^{-(\frac {2}{p-1}+\alpha)k-|\beta|-2},
\end{gather*}
which implies \eqref{V4}.

\smallskip

Finally, we prove~\eqref{dtVrsigma}.
Since $ |x_0| <1$, we have for $s$ small $ |\partial _t V_0| \gtrsim s ^{- \frac {p+1} {p-1} }$, and~\eqref{dtVrsigma} follows.
\end{proof}

\subsection{Refined blow up ansatz}\label{s23}
Starting from $V_0$, we define by induction a refined ansatz to the nonlinear wave equation \eqref{eq:v}.

Let $V_0$ be defined in \eqref{def:V0} and let $\Ens_0$ be defined in \eqref{def:E0}. Let $s_0=1$.
For $j\geq 1$, let
\begin{align*}
v_j & = - \frac1{3p+1} \left[\frac{2(p+1)}{1-|\nabla\psi|^2}\right]^{\frac 12}
\left(V_0^{\frac {p+1}2} \int_0^s V_0^{-p} \Ens_{j-1} ds' 
+ V_0^{-p} \int_{s}^{s_{j-1}} V_0^{\frac {p+1}2} \Ens_{j-1} ds'\right),
\\
V_j&=V_0+ \sum_{ \ell =1}^j \chi_ \ell  v_ \ell ,\\
\Ens_j& =-(1-|\nabla \psi|^2)\partial_{ss}V_j +2 \nabla \psi\cdot \nabla \partial_s V_j
+(\Delta \psi) \partial_s V_j + \Delta V_j+f(V_j),
\end{align*}
where $\chi_j(x)=\chi( A(x)/{r_j})$ and $0<r_j\leq 1$, $0<s_j\leq 1$ are parameters to be defined for each $j=1,\ldots,J$.
Since $V_0$ is of class $\cont^\infty$ in $s$ and of class $\cont^{q_0-1}$ in $x$, the above expressions make sense as continuous functions for $j$ such that $j \le J$. This restriction is due to the spatial derivatives in $V_j$ in the expression of $\Ens_j$.

\begin{lem}\label{le:3}
There exist $0<r_J\leq\cdots\leq r_1\leq 1$ and $0<s_J\leq\cdots\leq s_1\leq 1$ such that
for any $0\leq j\leq J$,
for any $\alpha\in \N$, $\beta\in \N^N$, $0<s\leq s_j$, $x\in \R^N$, the following hold:
\begin{enumerate}
\item If $1\leq j\leq J$, $|\beta|\leq q_0-2j -1$ and $|x|\leq R$, then
\begin{equation}\label{v4}
|\partial_s^\alpha\partial_x^\beta v_j| \lesssim V_0^{1+(-j+\alpha+\frac{j+|\beta|}k)\frac{p-1}2};
\end{equation}
\item If $1\leq j\leq J$, then
\begin{align}
&|V_j-V_0|\leq \frac 14 (1-2^{-j}) V_0 ,\quad |V_j-V_0|\leq (1-2^{-j})(1+V_0)^{-\frac{p-1}4}V_0,\label{v5}\\
&|\partial_s V_j-\partial_s V_0|\lesssim V_0^{1+\frac{p-1}{2k}};\label{v5bis}
\end{align}
\item If $|\beta|\leq q_0-2(j+1) -1$ and $|x|\leq R$, then
\begin{equation}\label{v1}
|\partial_s^\alpha\partial_x^\beta \Ens_j|\lesssim V_0^{\frac{p+1}2+(-j+\alpha+\frac{1+j+|\beta|}k)\frac{p-1}2};
\end{equation}
\item If $|x|> R$, then
\begin{align}
|\partial_s^\alpha\partial_x^\beta V_j|&\lesssim |x|^{-(\frac {2}{p-1}+\alpha)k-|\beta|};\label{v2}\\
|\partial_s^\alpha\partial_x^\beta \Ens_j|&\lesssim |x|^{-(\frac {2}{p-1}+\alpha)k-|\beta|-2}.\label{v3}
\end{align}
\end{enumerate}
\end{lem}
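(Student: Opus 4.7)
The plan is to prove Lemma~\ref{le:3} by induction on $0\le j\le J$, with the parameters $1\ge r_1\ge\cdots\ge r_J>0$ and $1\ge s_1\ge\cdots\ge s_J>0$ selected during the induction. The base case $j=0$ is supplied directly by Lemma~\ref{le:0}: (iii)$_0$ is~\eqref{V2}, (iv)$_0$ is~\eqref{V3}--\eqref{V4}, while (i)$_0$ and (ii)$_0$ are vacuous.

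For the inductive step, I will assume the lemma at step $j-1$ and establish~\eqref{v4} first. The method is to apply $\partial_s^\alpha\partial_x^\beta$ to the integral formula defining $v_j$ and expand via Leibniz~\eqref{lbz} and Faa di Bruno~\eqref{fdb}, bounding derivatives of the smooth prefactor $\left[2(p+1)/(1-|\nabla\psi|^2)\right]^{1/2}$ by a constant, those of $V_0^{(p+1)/2}$ and $V_0^{-p}$ by~\eqref{V1}, and those of $\Ens_{j-1}$ by the inductive hypothesis~\eqref{v1}. Each time integration effectively produces an additional factor $V_0^{-(p-1)/2}$, which accounts for the gain of one unit of $(p-1)/2$ in the exponent of $V_0$ compared to $\Ens_{j-1}$ and yields exactly~\eqref{v4}. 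Next, to derive~\eqref{v5}--\eqref{v5bis}, I will use that on $\mathop{\mathrm{supp}}\chi_j$ one has $A(x)\le 2r_j$, hence $V_0(s,x)\ge\kappa(x)(s_j+2r_j)^{-2/(p-1)}$; since the exponent in~\eqref{v4} for $|\beta|=0$ carries the strictly negative correction $-j(p-1)/2$ modulo $O(1/k)$, choosing $s_j,r_j$ small will force $|\chi_j v_j|$ to be as small as needed relative to $V_0$.

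The central step is~\eqref{v1}, which I regard as the main obstacle. Writing $V_j=V_{j-1}+\chi_j v_j$, expanding the definition of $\Ens_j$ gives
\[
\Ens_j-\Ens_{j-1} = -(1-|\nabla\psi|^2)\partial_{ss}(\chi_j v_j) + 2\nabla\psi\cdot\nabla\partial_s(\chi_j v_j) + (\Delta\psi)\partial_s(\chi_j v_j) + \Delta(\chi_j v_j) + f(V_j)-f(V_{j-1}).
\]
Using Taylor's formula~\eqref{taylor1} to write $f(V_j)-f(V_{j-1}) = f'(V_{j-1})\chi_j v_j + O\!\bigl(V_{j-1}^{p-\bar p-1}|\chi_j v_j|^{\bar p+1}\bigr)$ and~\eqref{taylor10} together with~\eqref{v5} to replace $f'(V_{j-1})$ by $pV_0^{p-1}$ up to acceptable error, I will recognize that $v_j$ is precisely the variation-of-parameters solution of the linearized ODE $(1-|\nabla\psi|^2)\partial_{ss}w - pV_0^{p-1}w = -\Ens_{j-1}$, so that the exact cancellation
\[
\chi_j\Big[-(1-|\nabla\psi|^2)\partial_{ss}v_j + pV_0^{p-1}v_j\Big] = -\chi_j\Ens_{j-1}
\]
holds. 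What survives in $\Ens_j$ then falls into four categories: (a) the outer piece $(1-\chi_j)\Ens_{j-1}$, supported where $A\ge r_j$ so that $V_0\lesssim r_j^{-2/(p-1)}$ and the surplus $V_0$-power in~\eqref{v1} at step $j-1$ is absorbed by choosing $r_j$ small; (b) commutators of $\chi_j$ with $\Delta$ and $\nabla$, supported in $\{r_j\le A\le 2r_j\}$ and handled using~\eqref{v4} and $|\partial_x^\gamma\chi_j|\lesssim r_j^{-|\gamma|/k}$; (c) the $\psi$-terms $2\nabla\psi\cdot\nabla\partial_s(\chi_j v_j)+(\Delta\psi)\partial_s(\chi_j v_j)$, already one order lower in $V_0$ than the canceled leading piece and bounded by~\eqref{v4}; and (d) the nonlinear remainder, estimated via~\eqref{taylor1}--\eqref{taylor10} and~\eqref{v5}. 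Spatial derivatives of each category are treated by Leibniz~\eqref{lbz} together with~\eqref{v4},~\eqref{V1} and the inductive~\eqref{v1}, and each contribution matches the right-hand side of~\eqref{v1} at step $j$.

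Finally,~\eqref{v2}--\eqref{v3} at step $j$ are immediate: for $|x|>R\ge 2$ and $r_j\le 1$ one has $A(x)=|x|^k\ge 2\ge 2r_j$, so $\chi_j(x)=0$ and $V_j\equiv V_{j-1}$ on $\{|x|>R\}$, and the bounds inherit from step $j-1$. The decisive difficulty is the bookkeeping in step (iii): one must track the exact $V_0$-exponent of each of the four error categories at every order of space-time differentiation up to $q_0-2(j+1)-1$, especially on the thin annulus $\{r_j\le A\le 2r_j\}$ where $V_0$ is only bounded rather than large and no gain can be extracted from negative powers of $V_0$.
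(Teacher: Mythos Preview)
Your outline follows the paper's proof closely: induction on $j$, the variation-of-parameters identity $(1-|\nabla\psi|^2)\partial_{ss}v_j = pV_0^{p-1}v_j + \Ens_{j-1}$ (your displayed ODE has a sign slip, though the cancellation formula you write next is correct), and the resulting decomposition of $\Ens_j$. Two small corrections to the bookkeeping in (iii) are worth flagging.

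First, your four categories omit the full Laplacian term $\Delta(\chi_j v_j)$. Your (b) covers only the commutator pieces $2\nabla\chi_j\cdot\nabla v_j+(\Delta\chi_j)v_j$, which are indeed supported in the annulus $\{r_j\le A\le 2r_j\}$; but the remaining piece $\chi_j\Delta v_j$ lives on all of $\{A\le 2r_j\}$ and is neither a commutator nor a $\psi$-term. It is harmless---by \eqref{v4} with two extra spatial derivatives one has $|\partial_s^\alpha\partial_x^\beta(\chi_j\Delta v_j)|\lesssim V_0^{1+(-j+\alpha+\frac{j+2+|\beta|}{k})\frac{p-1}2}$, and since $V_0\gtrsim 1$ and $k\ge 1$ this is bounded by the right-hand side of \eqref{v1}---but it must appear in the list. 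The paper in fact keeps $\Delta(\chi_jv_j)$ as a single block, without separating commutators.

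Second, in (a) the phrase ``absorbed by choosing $r_j$ small'' is backward. The surplus exponent $(1-\tfrac1k)\tfrac{p-1}2$ between \eqref{v1} at level $j-1$ and level $j$ is absorbed because on $\{A\ge r_j\}$ one has $V_0\lesssim r_j^{-2/(p-1)}$, i.e.\ $V_0$ is \emph{bounded above} by an $r_j$-dependent constant; shrinking $r_j$ makes that constant worse, not better. The smallness of $r_j$ and $s_j$ is used only to secure \eqref{v5}.
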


\begin{rem} \label{eRems2} 
To complete the energy control in Section~\ref{sec:3}, we need an error estimate of the form $ \| {\mathcal E}_J \| _{ L^2 }\lesssim s^{ \frac {2} {p-1}+ \delta }$, as in~\cite{CaMaZhwave1} (see~\eqref{e:EJ}), as well as an estimate of the form $ \| \partial _s {\mathcal E}_J \| _{ L^2 }\lesssim s^{-1 + \delta  }$ (see the proof of~\eqref{e:dK}), with $\delta >0$. 
This requires a sufficiently large $J$, see~\eqref{defJ}, and then a sufficiently large $k$. 
Compared with Lemma~2.3 (see also Remark~2.4) in~\cite{CaMaZhwave1}, we need twice as many steps. This is due to the terms depending on $\partial _s V_j$ in the expression of the error term ${\mathcal E}_j$.
These necessary restrictions have the important consequence that the minimal regularity of the hypersurface that we can consider in Theorem~\ref{TH:2} depends on $p$, see~\eqref{defJ}.
\end{rem} 

\begin{proof} [Proof of Lemma~$\ref{le:3}$] We observe that \eqref{v1}, \eqref{v2} and \eqref{v3} for $j=0$ are exactly \eqref{V2}, \eqref{V3} and \eqref{V4} in Lemma~\ref{le:0}.
We proceed by induction on $j$: for any $1\leq j\leq J$, we prove that estimate~\eqref{v1} for $\Ens_{j-1}$ implies
\eqref{v4}--\eqref{v1} for $v_j$, $V_j$ and $\Ens_j$. Let $s_0=1$.

\smallskip

Proof of \eqref{v4}. Let $1\leq j\leq J$.
First, assuming \eqref{v1} for $\Ens_{j-1}$, we show the following estimates related to the two components of $v_j$,
for $|\beta|\leq q_0-2j -1$, $0<s<s_{j-1}$ and $|x|\leq R$,
\begin{align}
\left| \partial_s^\alpha\partial_x^{\beta}\left(\int_0^s V_0^{-p} \Ens_{j-1} ds'\right)\right|&
\lesssim V_0^{-\frac{p-1}2+(-j+\alpha+\frac{j+|\beta|}k)\frac{p-1}2}; \label{v6}\\
\left| \partial_s^\alpha\partial_x^{\beta}\left(\int_{s}^{s_{j-1}} V_0^{\frac {p+1}2}\Ens_{j-1}ds'\right)\right|&
\lesssim V_0^{p+1+(-j+\alpha+\frac{j+|\beta|}k)\frac{p-1}2}. \label{v7}
\end{align}
Indeed, we have by Leibniz's formula
\begin{equation*}
\partial_s^\alpha\partial_x^{\beta} \left(V_0^{-p}\Ens_{j-1}\right)
=\sum_{\alpha'\leq \alpha}\sum_{\beta'\leq \beta} 
\binom{\alpha}{\alpha'}\binom{\beta}{\beta'}
\left(\partial_s^{\alpha'}\partial_x^{\beta'} (V_0^{-p})\right)
\left(\partial_s^{\alpha-\alpha'}\partial_x^{\beta-\beta'}\Ens_{j-1}\right),
\end{equation*}
and thus
using \eqref{V1} and \eqref{v1} for $\Ens_{j-1}$, we obtain
\begin{align*}
\left|\partial_s^\alpha\partial_x^{\beta} \left(V_0^{-p}\Ens_{j-1}\right)\right|
&\lesssim \sum_{\alpha'\leq \alpha}\sum_{\beta'\leq \beta} V_0^{-p+(\alpha'+\frac{|\beta'|}k)\frac{p-1}2} V_0^{\frac{p+1}2+(1-j+\alpha-\alpha'+\frac{j+|\beta-\beta'|}k)\frac{p-1}2}\\
&\lesssim V_0^{(-j+\alpha+\frac{j+|\beta|}k)\frac{p-1}2}.
\end{align*}
For $\alpha= 0$, $|\beta|\leq q_0 -1\leq k-1$ and $1\leq j\leq J$, we note that
\[
\left|\partial_x^{\beta} \left(V_0^{-p}\Ens_{j-1}\right)\right|
\lesssim V_0^{-a\frac{p-1}2} \lesssim (s+A)^{a},
\]
where
\[
a = j- \frac{j+|\beta|}k =j \left(1-\frac1k\right)-\frac{|\beta|}k\geq 0.
\]
This means that we can integrate this term on $(0,s)$ for $0<s\leq s_{j-1}$.
We obtain
\begin{equation*}
\int_0^s \left|\partial_x^\beta \left(V_0^{-p} \Ens_{j-1}\right)\right| ds' \lesssim 
(s+A)^{a+1}\lesssim V_0^{-\frac{p-1}2+(-j+\frac{j+|\beta|}k)\frac{p-1}2}.
\end{equation*}
For $\alpha\geq1$,
\begin{equation*}
\left|\partial_s^\alpha\partial_x^\beta\left(\int_0^s V_0^{-p} \Ens_{j-1} ds'\right)\right|
=\left|\partial_s^{\alpha-1}\partial_x^{\beta}\left(V_0^{-p} \Ens_{j-1}\right)\right| \lesssim
V_0^{-\frac{p-1}2+(-j+\alpha+\frac{j+|\beta|}k)\frac{p-1}2},
\end{equation*}
which proves \eqref{v6}.
Similarly, using Leibniz's formula, we check the estimate
\begin{equation*}
\left|\partial_s^\alpha\partial_x^{\beta} \left(V_0^{\frac{p+1}2}\Ens_{j-1}\right)\right|
\lesssim V_0^{p+1+(1-j+\alpha+\frac{j+|\beta|}k)\frac{p-1}2}.
\end{equation*}
In particular, for $\alpha=0$,
\begin{equation*}
\left|\partial_x^{\beta} \left(V_0^{\frac{p+1}2}\Ens_{j-1}\right)\right|
\lesssim V_0^{b\frac{p-1}2} \lesssim (s+A)^{-b},
\end{equation*}
where, using $1\leq j\leq J\leq \frac{2p+2}{p-1}$,
\[
b=\frac{2p+2}{p-1}+ 1-j+\frac{j+|\beta|}k \geq 1+ \frac{j+|\beta|}k >1.
\]
Thus, by integration on $(s,s_{j-1})$,
\begin{equation*}
\left| \partial_x^\beta \left(\int_s^{s_{j-1}} V_0^{\frac{p+1}2}\Ens_{j-1}ds'\right)\right|
\lesssim (s+A)^{-b+1}\lesssim V_0^{p+1+(-j+\frac{j+|\beta|}k)\frac{p-1}2}.
\end{equation*}
For $\alpha\geq 1$,
\begin{equation*}
\left|\partial_s^\alpha\partial_x^{\beta}\left(\int_s^{s_{j-1}} V_0^{\frac{p+1}2}\Ens_{j-1} ds'\right)\right|
=\left|\partial_s^{\alpha-1}\partial_x^\beta\left(V_0^{\frac{p+1}2}\Ens_{j-1}\right)\right| \lesssim
V_0^{p+1+(-j+\alpha+\frac{j+|\beta|}k)\frac{p-1}2},
\end{equation*}
which proves \eqref{v7}.

Using estimates \eqref{V1}, \eqref{v6}, \eqref{v7} and again Leibniz's formula, we obtain, for all $s\in (0,s_{j-1}]$,
\begin{align*}
\left|\partial_s^\alpha\partial_x^{\beta}\left(V_0^{\frac{p+1}2}\int_0^s V_0^{-p} \Ens_{j-1} ds'\right)\right|&
\lesssim V_0^{1+(-j+\alpha+\frac{j+|\beta|}k)\frac{p-1}2},\\
\left|\partial_s^\alpha\partial_x^{\beta}\left(V_0^{-p}\int_{s}^{s_{j-1}} V_0^{\frac {p+1}2}\Ens_{j-1}ds'\right)\right|&
\lesssim V_0^{1+(-j+\alpha+\frac{j+|\beta|}k)\frac{p-1}2}.
\end{align*}
These estimates implies \eqref{v4} for $v_j$ on $(0,s_{j-1}]$.

\smallskip

Proof of \eqref{v5}--\eqref{v5bis}. 
For $j=1$, we prove \eqref{v5} as a consequence of \eqref{v4}.
For $2\leq j\leq J$, we prove \eqref{v5} as a consequence of \eqref{v4} for $j$ and \eqref{v5} for $j-1$.

For $|x|>R\geq 1$, \eqref{on:A} implies $A(x)\geq 2^k\geq2 r_1\geq \cdots\geq2 r_j$, 
thus $\chi_j=0$ and $V_j=V_0$.

For $0<s\leq s_{j-1}$ and $|x|<R$, by \eqref{v4} with $\alpha=0$ and $\beta=0$,
using the definition of $\chi_j$ and the bound $V_0\gtrsim 1$, we have
\begin{equation*}
\chi_j |v_j|\lesssim \chi_j V_0^{1-j(1-\frac1k)\frac{p-1}2}\lesssim \chi_j V_0^{1-(1-\frac1k)\frac{p-1}2} 
\lesssim \chi_j (s+A)^{1-\frac1k} V_0\lesssim (s+r_j)^{1-\frac1k}V_0.
\end{equation*}
Choosing $0<r_j\leq 1$ and $0<s_j\leq s_{j-1}$ sufficiently small, we impose, for $s\in (0,s_j]$,
\begin{equation*}
\chi_j|v_j|\leq 2^{-j-2}V_0\quad\hbox{and}\quad \chi_j|v_j|\leq 2^{-j}(1+V_0)^{-\frac{p-1}4}V_0.
\end{equation*}
In the case $j=1$, this proves \eqref{v5}.
For $j\geq 2$, combining this estimate with \eqref{v5} for $j-1$, we find, for all $s\in (0,s_j]$ and $x\in \R^N$, 
\begin{equation*}
\sum_{ \ell =1}^j \chi_ \ell |v_ \ell |\leq\frac14(1-2^{-j})V_0\quad\hbox{and}\quad \sum_{ \ell =1}^j\chi_ \ell |v_ \ell |\leq(1-2^{-j})(1+V_0)^{-\frac{p-1}4}V_0,
\end{equation*}
which is \eqref{v5}.

To prove \eqref{v5bis}, we note that by \eqref{v4}, and using $A\lesssim V_0^{-\frac{p-1}2}$,
\begin{equation*}
\sum_{ \ell =1}^j \chi_ \ell |\partial_sv_ \ell |\lesssim \sum_{ \ell =1}^j\chi_ \ell  V_0^{1+(1- \ell (1-\frac 1k))\frac{p-1}2}
\lesssim V_0^{1+\frac{p-1}{2k}}.
\end{equation*}

\smallskip

Proof of \eqref{v1}. 
Note that \eqref{v1} for $j=0$ was already checked.
Now, for $1\leq j\leq J$, we prove \eqref{v1} for $\Ens_j$ assuming \eqref{v1} for $\Ens_{j-1}$, \eqref{v4} for $v_j$ and \eqref{v5} for $V_j$. This suffices to complete the induction argument.

By direct computations, we briefly check that the function~$v_j$ satisfies
\begin{equation}\label{eq:vj}
(1-|\nabla \psi|^2) \partial_{ss} v_j=f'(V_0) v_j+\Ens_{j-1}.
\end{equation}
Indeed, we have
\begin{multline*}
\partial_s v_j =- \frac1{3p+1} \left[\frac{2(p+1)}{1-|\nabla\psi|^2}\right]^{\frac 12}
\biggl(\frac{p+1}2\partial_s V_0 V_0^{\frac {p-1}2} \int_0^s V_0^{-p} \Ens_{j-1} ds'\\ 
 -p\partial_s V_0 V_0^{-p-1} \int_{s}^{s_{j-1}} V_0^{\frac {p+1}2} \Ens_{j-1} ds'
\biggr),
\end{multline*}
and thus, using \eqref{eq:V},
\begin{multline*}
(1-|\nabla \psi|^2)^{\frac 12}\partial_{s} v_j =  \frac1{3p+1} \left[\frac{2(p+1)}{1-|\nabla\psi|^2}\right]^{\frac 12}
\biggl(\left(\frac{p+1}2\right)^{\frac 12} V_0^p
\int_0^s V_0^{-p} \Ens_{j-1} ds'\\ 
-p\left(\frac{p+1}2\right)^{-\frac 12} V_0^{-\frac{p+1}2} \int_{s}^{s_{j-1}} V_0^{\frac {p+1}2} \Ens_{j-1} ds'
\biggr).
\end{multline*}
Differentiating in $s$ again, and using \eqref{eq:V}, we obtain
\begin{equation*}
(1-|\nabla \psi|^2)\partial_{ss} v_j =pV_0^{p-1} v_j+\Ens_{j-1},
\end{equation*}
which is \eqref{eq:vj}.

Using \eqref{eq:vj}, $V_j=V_{j-1}+\chi_j v_j$ and the definition of $\Ens_{j-1}$, we have
\begin{align*}
\Ens_j
&=\Ens_{j-1} 
-\chi_j (1-|\nabla \psi|^2)\partial_{ss}v_j+2 \nabla \psi\cdot \nabla \partial_s (\chi_j v_j)
+(\Delta \psi) \partial_s(\chi_j v_j) +\Delta (\chi_j v_j)\\ 
&\quad + f(V_j)-f(V_{j-1})\\
&= (1-\chi_j) \Ens_{j-1}+2 \nabla \psi\cdot \nabla \partial_s (\chi_j v_j)
+(\Delta \psi) \partial_s(\chi_j v_j) +\Delta (\chi_j v_j)\\ 
&\quad+f(V_j)-f(V_{j-1})-f'(V_0)\chi_j v_j.
\end{align*}
We estimate each term of the right-hand side above for $|x|\leq R$.

For the first term, recall that for $A\leq r_j$, and any $\beta'$, $1-\chi_j=0$ and $\partial_x^{\beta'} \chi_j=0$.
Moreover, for $0< s\leq 1$, for $x$ such that $A(x)>r_j$ and $|x|\leq R$, one has $A \approx 1$ and $V_0\approx 1$.
Thus, using \eqref{v1} for $\Ens_{j-1}$, we find
\begin{equation*}
|\partial_s^\alpha\partial_x^\beta [(1-\chi_j) \Ens_{j-1}]|\lesssim V_0^{\frac{p+1}2+(-j+\alpha+\frac{1+j+|\beta|}{k})\frac{p-1}2}.
\end{equation*}

Now, we treat the next three terms in the expression of $\Ens_{j}$.
By Leibniz's formula, the properties of~$\psi$ and $\chi_j$, \eqref{v4} and then $V_0\gtrsim 1$, we have,
for $0<s\leq s_j$ and $|x|<R$,
\begin{align*}
\left|\partial_s^\alpha\partial_x^\beta[\nabla \psi\cdot \nabla \partial_s (\chi_j v_j)]\right|
& \lesssim \sum_{\alpha'=1}^{\alpha+1}\sum_{|\beta'|\leq|\beta|+1} \left|\partial_s^{\alpha'}\partial_x^{\beta'}v_j\right|
\lesssim V_0^{1+(-j+1+\alpha+\frac{1+j+|\beta|}k)\frac{p-1}2};\\
\left|\partial_s^\alpha\partial_x^\beta[(\Delta \psi) \partial_s(\chi_j v_j)]\right|
& \lesssim \sum_{\alpha'=1}^{\alpha+1}\sum_{|\beta'|\leq|\beta|}\left|\partial_s^{\alpha'}\partial_x^{\beta'}v_j\right|
\lesssim V_0^{1+(-j+1+\alpha+\frac{j+|\beta|}k)\frac{p-1}2};\\
\left|\partial_s^\alpha\partial_x^\beta[\Delta (\chi_j v_j)]\right|
& \lesssim \sum_{\alpha'=1}^{\alpha}\sum_{|\beta'|\leq|\beta|+2}\left|\partial_s^{\alpha'}\partial_x^{\beta'}v_j\right|
\lesssim V_0^{1+(-j+\alpha+\frac{2+j+|\beta|}k)\frac{p-1}2};
\end{align*}
we see that these three terms are estimated by $V_0^{\frac{p+1}2+(-j+\alpha+\frac{1+j+|\beta|}k)\frac{p-1}2}$.

Finally, we estimate $\partial_s^\alpha\partial_x^\beta[f(V_j)-f(V_{j-1})-f'(V_0)\chi_j v_j]$ using Taylor expansions on 
$f$ and its derivatives. 
We start with the case $\alpha=\beta=0$.
Recall that by \eqref{v5}, we have $0<\frac 34 V_0 \leq V_j\leq \frac 54 V_0$.
The following Taylor expansions hold:
\begin{equation*}
\left|f(V_j)-f(V_{j-1})-f'(V_{j-1})\chi_j  v_j\right|
\lesssim \chi_j  ^{2} V_0^{p-2}v_j^2,
\end{equation*}
and
\begin{equation*}
|f'(V_{j-1})-f'(V_0)|\lesssim V_0^{p-2}\sum_{ \ell =1}^{j-1} \chi_ \ell |v_ \ell |.
\end{equation*}
These estimates imply
\begin{equation*}
\left|f(V_j)-f(V_{j-1})-f'(V_0)\chi_j v_j\right|
\lesssim \chi_j V_0^{p-2}|v_j| \sum_{ \ell =1}^{j} \chi_ \ell |v_ \ell |.
\end{equation*}
For $1\leq  \ell \leq j$, using \eqref{v4} and next $V_0\gtrsim 1$, we have
\begin{align*}
 V_0^{p-2}|v_j||v_ \ell |
&\lesssim V_0^{p-2}V_0^{1-j(1-\frac1k)\frac{p-1}2}V_0^{1- \ell (1-\frac1k)\frac{p-1}2}
\\&\lesssim V_0^{{p-(j+ \ell )(1-\frac1k)\frac{p-1}2}}\lesssim V_0^{{p-(j+1)(1-\frac1k)\frac{p-1}2}} .
\end{align*}
Thus, 
$\left|f(V_j)-f(V_{j-1})-f'(V_0)\chi_j v_j\right|\lesssim V_0^{\frac{p+1}2+(-j+\frac{1+j}k)\frac{p-1}2}$ is proved.

Now, we consider the case $|\alpha|+|\beta|\geq 1$.
By the Taylor formula with integral remainder we have for any $V$ and $w$
\begin{equation*}
f(V+w)-f(V)-f'(V)w=w^2\int_0^1 (1-\theta)f''(V+\theta w)d\theta.
\end{equation*}
Therefore, using the notation $\nub=(\alpha,\beta_1,\ldots,\beta_N)$,
by the Leibniz formula~\eqref{lbz},
\begin{multline*}
\partial^{\nub} [f(V+w)-f(V)-f'(V)w]
\\=\sum_{\nub'\leq \nub} \binom{\nub}{\nub'} \left(\partial^{\nub-\nub'}(w^2)\right)
 \int_0^1 (1-\theta) \partial^{\nub'} [f''(V+\theta w) ] d\theta
\end{multline*}
and, by the Faa di Bruno formula \eqref{fdb}, for $\nub'\neq 0$, denoting $n'=|\nub'|$,
\begin{equation} \label{OMF} 
\partial^{\nub'} [f''(V+\theta w) ]=
 \sum_{r=1}^{n'} f^{(r+2)}(V+\theta w) \sum_{P(\nub',r)}(\nub'!)
\prod_{ \ell =1}^{n'} \frac{\left(\partial^{\nub_ \ell }(V+\theta w)\right)^{r_ \ell }}{(r_ \ell !)(\nub_ \ell !)^{r_j}}.
\end{equation}
To estimate the term $\partial^\nub [f(V_j)-f(V_{j-1})-f'(V_{j-1})\chi_j v_j]$,
we apply these formulas to $V=V_{j-1}$ and $w=\chi_j v_j$.
First, for $\nub'\leq \nub$, using \eqref{v4} and the properties of $\chi$, we obtain
\begin{align*}
\left|\partial^{\nub-\nub'}\left[(\chi_j v_j)^2\right]\right|
&\lesssim \sum_{\nub''\leq \nub-\nub'} 
\left|\partial^{\nub''}(\chi_j v_j)\right|\left|\partial^{\nub-\nub'-\nub''}(\chi_j v_j)\right|\\
&\lesssim V_0^{2+ (\alpha-\alpha'-2 j+\frac{2j+|\beta-\beta'|}k)\frac{p-1}2}.
\end{align*}
Thus, for $\nub'=0$ and $\theta\in [0,1]$, from \eqref{v5}, we obtain
\begin{align*}
\left|\partial^{\nub}\left[(\chi_j v_j)^2\right] f''(V_{j-1}+\theta \chi_j v_j)\right|
& \lesssim V_0^{2+ (\alpha-2j+\frac{2j+|\beta|}k)\frac{p-1}2} V_0^{p-2}\\
& \lesssim V_0^{\frac{p+1}2+(\alpha-j+\frac{1+j+|\beta|}k)\frac{p-1}2}.
\end{align*}
Second, for $\nub'\neq 0$, $\nub'\leq \nub$ and $\theta\in [0,1]$, from formula~\eqref{OMF}, using \eqref{V1} and \eqref{v5}, we have
(the definition of $P(\nub',r)$ implies $\sum_{ \ell =1}^{n'} r_ \ell =r$, $\sum_{ \ell =1}^{n'} r_ \ell \nub_ \ell =\nub'$)
\begin{align*}
|\partial^{\nub'} [f''(V_{j-1}+\theta \chi_j v_j)]|
&\lesssim \sum_{r=1}^{n'} V_0^{p-r-2} \sum_{P(\nub',r)} 
\prod_{ \ell =1}^{n'} \left(V_0^{1+(\alpha_ \ell +\frac{|\beta_ \ell |}{k})\frac{p-1}2}\right)^{r_ \ell }\\
&\lesssim \sum_{r=1}^{n'} V_0^{p-r-2}V_0^{r+(\alpha'+\frac{|\beta'|}{k})\frac{p-1}2}
\lesssim V_0^{p-2+(\alpha'+\frac{|\beta'|}{k})\frac{p-1}2}.
\end{align*}
Thus, we have proved
\begin{equation*}
\left|\partial^{\nub-\nub'}\left[(\chi_j v_j)^2\right] \partial^{\nub'}\left[f''(V_{j-1}+\theta \chi_j v_j)\right]\right|
\lesssim V_0^{\frac{p+1}2+(-j+\alpha+\frac{1+j+|\beta|}k)\frac{p-1}2};
\end{equation*}
and so by integration in $\theta\in [0,1]$,
\begin{equation}\label{UN}
\left|\partial^{\nub} [f(V_j)-f(V_{j-1})-f'(V_{j-1})\chi_j v_j]\right| 
 \lesssim V_0^{\frac{p+1}2+(-j+\alpha+\frac{1+j+|\beta|}k)\frac{p-1}2}.
\end{equation}

\smallskip

We now estimate $ \partial^\nub [f(V_j)-f(V_{j-1})-f'(V_0)\chi_j v_j] $. For any $V,W,w$, we have
\begin{equation*}
f'(V)-f'(W)=(V-W) \int_0^1 f''(W+\theta(V-W)) d\theta,
\end{equation*}
and thus
\begin{multline*}
\partial^{\nub}[w(f'(V)-f'(W))]
\\=\sum_{\nub'\leq \nub} \binom{\nub}{\nub'} \left(\partial^{\nub-\nub'}[w(V-W)]\right)
\int_0^1 \partial^{\nub'} [f''(W+\theta(V-W)) ] d\theta.
\end{multline*}
Moreover, for $\nub'\neq 0$, formula~\eqref{OMF} (with $V$ replaced by $W$, and $w$ by $V-W$) yields
\begin{multline*}
\partial^{\nub'} [f''(W+\theta(V-W)) ]\\ =
 \sum_{r=1}^{n'} f^{(r+2)}(W+\theta(V-W))\sum_{P(\nub',r)}(\nub'!)
\prod_{ \ell =1}^{n'} \frac{\left(\partial^{\nub_ \ell }(W+\theta(V-W))\right)^{r_ \ell }}{(r_ \ell !)(\nub_ \ell !)^{r_ \ell }}.
\end{multline*}
To estimate the term $\partial^\nub [\chi_j v_j(f'(V_{j-1})-f'(V_0))]$,
we apply these formulas to $V=V_{j-1}$, $W=V_0$ and $w=\chi_j v_j$.

For $\nub'\leq \nub$, using \eqref{v4} and Leibniz's formula, we have, for $1\leq  \ell \leq j-1$,
\begin{align*}
\left| \partial^{\nub-\nub'}[\chi_j v_j \chi_ \ell v_ \ell ]\right|
&\lesssim V_0^{2+ (-j- \ell +\alpha-\alpha'+\frac {j+ \ell +|\beta-\beta'|}k)\frac{p-1}2}\\
&\lesssim V_0^{-\frac{p-5}2+(-j+\alpha-\alpha'+\frac {1+j+|\beta-\beta'|}k)\frac{p-1}2}
\end{align*}
For $\nub'=0$ and $\theta\in [0,1]$, from \eqref{v5}, we obtain
\begin{equation*}
\left|\partial^{\nub}\left[\chi_j v_j(V_{j-1}-V_0)\right] f''(V_0+\theta (V_{j-1}-V_0))\right|
\lesssim V_0^{\frac{p+1}2+(-j+\alpha+\frac{1+j+|\beta|}k)\frac{p-1}2}.
\end{equation*}
Second, for $\nub'\neq 0$, $\nub'\leq \nub$ and $\theta\in [0,1]$, by formula~\eqref{OMF}, using \eqref{V1}, \eqref{v4} and \eqref{v5}, we have
\begin{align*}
|\partial^{\nub'}[f''(V_0+\theta (V_{j-1}-V_0))]|
&\lesssim \sum_{r=1}^{n'} V_0^{p-r-2} \sum_{P(\nub',r)} 
\prod_{ \ell =1}^{n'} \left(V_0^{1+(\alpha_ \ell +\frac{|\beta_ \ell |}{k})\frac{p-1}2}\right)^{r_ \ell }\\
&\lesssim \sum_{r=1}^{n'} V_0^{p-r-2} V_0^{r+(\alpha'+\frac{|\beta'|}{k})\frac{p-1}2}
\lesssim V_0^{p-2+(\alpha'+\frac{|\beta'|}{k})\frac{p-1}2}.
\end{align*}
Thus, we obtain
\begin{equation*}
\left|\partial^{\nub-\nub'}\left[\chi_j v_j(V_{j-1}-V_0)\right] \partial^{\nub'}\left[f''(V_0+\theta (V_{j-1}-V_0))\right]\right|
\lesssim V_0^{\frac{p+1}2+(-j+\alpha+\frac{1+j+|\beta|}k)\frac{p-1}2}.
\end{equation*}
Integrating in $\theta\in [0,1]$ and summing in $\nub'\leq \nub$, we obtain
\begin{equation}\label{DEUX}
\left| \partial^\nub [\chi_j v_j(f'(V_{j-1})-f'(V_0))]\right|
\lesssim V_0^{\frac{p+1}2+(-j+\alpha+\frac{1+j+|\beta|}k)\frac{p-1}2}.
\end{equation}
Combining \eqref{UN} and \eqref{DEUX}, we have proved for $s\in (0,s_{j}]$, $|x|\leq R$,
\begin{equation*}
\left| \partial^\nub [f(V_j)-f(V_{j-1})-f'(V_0)\chi_j v_j] \right|
\lesssim V_0^{\frac{p+1}2+(-j+\alpha+\frac{1+j+|\beta|}k)\frac{p-1}2}.
\end{equation*}

In conclusion, we have estimated all terms in the expression of $\Ens_j$ and \eqref{v1} for $j$ is proved.

\smallskip

Proof of \eqref{v2}--\eqref{v3}. For $|x|>R\geq 2$, \eqref{on:A} implies $A(x)\geq 2^k\geq2 r_1\geq \cdots\geq2 r_j$, 
thus $\chi_j=0$ and $V_j=V_0$, $\Ens_j=\Ens_0$.
Thus, \eqref{v2}--\eqref{v3} follow from \eqref{V3}--\eqref{V4}.
\end{proof}

\section{Construction of a solution of the transformed equation~$\eqref{eq:v}$}\label{sec:3}
Let the function $\chi$ be given by \eqref{def:chi}, let  $\psi \in \cont^{q_0} (\R^N ,\R)$, where $q_0 $ is defined by~\eqref{th2:1}, satisfy~\eqref{on:psi}, let $J$, $q_0$ and $k$ be as in \eqref{defJ}-\eqref{onqk}.
Set
\begin{equation}\label{on:lam}
\lambda=\min\left\{\frac 12 \left(J-\frac{p+3}{p-1}\right); \frac 1p\right\}\in \left(0,\frac 12\right],
\end{equation}
and impose the following additional condition on $k$
\begin{equation}\label{on:kk}
{k\geq \frac{ 2 [ p+1+\lambda(p-1) ]}{\lambda(p-1)}.}
\end{equation}
Recall that $A:\R^N\to[0,+\infty[$ is defined by~\eqref{fExfnA}, and let $V_J$ be defined as in Section~\ref{s23}.

Our main result of this section is the following.

\begin{prop} \label{eExisV} 
Assume that 
\begin{equation}\label{c:psi}
\|\nabla \psi\|_{L^\infty}
\leq\frac \lambda 8  \frac{p-1}{p+1} .
\end{equation}
There exist $0 < \delta _0 <1$ and a function 
\begin{equation} \label{fRegv} 
v\in C ( (0,\delta_0),H^2(\R^N))\cap  C^1 ((0,\delta_0),H^1(\R^N))\cap C^2 ((0,\delta_0),L^2(\R^N))
\end{equation} 
which is a solution of~\eqref{eq:v} in $C ((0,\delta_0),L^2(\R^N))$, and which satisfies
\begin{equation}\label{festV1}
 \| (v  - V_J ) (s) \|_{H^2}^2+\|\partial_s (v  - V_J ) (s)\|_{H^1}^2  \leq C s^{ \lambda}
\end{equation}
for all $0<s<\delta _0$, with $\lambda $ given by~\eqref{on:lam}. 
In addition, there exist a constant $C$ and a function $g \in L^\infty ((0,\delta _0), H^1 (\R^N )  )$ such that 
\begin{equation} \label{fNFestinf} 
 | \partial _s v |^2 -  | \nabla v |^2 \ge \frac {1} {4}  | \partial _s V_0 |^2 - C - g ^2
\end{equation} 
a.e. on $(0,\delta _0) \times \R^N $.
\end{prop}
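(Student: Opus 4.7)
The plan is to construct $v$ as a limit of solutions with vanishing perturbation at a sequence of small positive times $s_n \downarrow 0$, following the scheme of~\cite{CaMaZhwave1} but performed at the $H^2 \times H^1$ level as required by the trace argument mentioned in Section~\ref{s:1:4}. Writing $v = V_J + w$, the perturbation $w$ must satisfy
\begin{equation*}
(1-|\nabla \psi|^2)\partial_{ss} w - 2\nabla \psi \cdot \nabla \partial_s w - (\Delta \psi)\partial_s w - \Delta w = f(V_J + w) - f(V_J) + \Ens_J,
\end{equation*}
with $\Ens_J$ controlled by Lemma~\ref{le:3}.

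For each small $s_n>0$ I would first solve the Cauchy problem for $w$ with $w(s_n)=0$, $\partial_s w(s_n)=0$, backward in $s$, in the class $C(I_n,H^2)\cap C^1(I_n,H^1)\cap C^2(I_n,L^2)$. Under the smallness assumption~\eqref{c:psi}, the principal operator is uniformly strictly hyperbolic, so local well-posedness follows from the standard linear $H^2\times H^1$ theory combined with a fixed-point argument; the nonlinearity $f(V_J+w)-f(V_J)$ is handled at no cost because $N\leq 4$ yields $H^2\hookrightarrow L^q$ for every $q<\infty$, as highlighted in Section~\ref{s:1:4}. The central step is then to establish energy estimates, uniform in $n$, that extend each $w_n$ backward to a common interval $(0,\delta_0)$. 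I would introduce a weighted $H^2\times H^1$ energy of the form
\begin{equation*}
\hnorm(s) = \tfrac12 \int \Bigl[(1-|\nabla\psi|^2)(\partial_s w)^2 + |\nabla w|^2\Bigr]dx + \text{second-order analogue},
\end{equation*}
coerced by the smallness of $\|\nabla\psi\|_{L^\infty}$, and derive a Gronwall-type differential inequality
\begin{equation*}
\Bigl|\frac{d}{ds}\hnorm\Bigr| \lesssim s^{-1}\hnorm + \|\Ens_J\|_{H^1}\hnorm^{1/2} + \|\partial_s\Ens_J\|_{L^2}\hnorm^{1/2} + \text{nonlinear terms},
\end{equation*}
in which the source terms behave like $s^{-1+\lambda'}$ thanks to Lemma~\ref{le:3} combined with the choices of $J$ in~\eqref{defJ} and of $k$ in~\eqref{on:kk}; the nonlinear contributions are controlled via~\eqref{taylor0}--\eqref{taylor} and the Sobolev embedding. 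Integrating backward from $s_n$ and letting $n\to\infty$ by compactness yields the solution $v=V_J+w$ satisfying~\eqref{fRegv} and the quantitative bound~\eqref{festV1}.

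The main obstacle is the derivation of this closed differential inequality for $\hnorm$: the first-order cross term $\nabla\psi\cdot\nabla\partial_s w$ does not integrate by parts cleanly, so one must add carefully weighted lower-order corrections to $\hnorm$ to absorb the resulting boundary/commutator terms, and similarly for the commutators $[\partial^{\nub},V_J^{p-1}\,\cdot\,]$ appearing when differentiating the equation once in space or time to reach the $H^2\times H^1$ level. Additionally, because the source in the $w$-equation involves $\partial_s \Ens_J$ (to control $\partial_s w$ in $H^1$), twice as many iteration steps as in~\cite{CaMaZhwave1} are needed; this is precisely what Remark~\ref{eRems2} anticipates and is ensured by~\eqref{defJ}--\eqref{onqk}. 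The smallness~\eqref{c:psi} is what lets the positive $H^2\times H^1$ quadratic form absorb the indefinite cross terms uniformly in $s$.

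Once $v$ is obtained, the pointwise estimate~\eqref{fNFestinf} is deduced as follows. By Lemma~\ref{le:0}, $|\partial_s V_0|^2 = \frac{2}{(p+1)(1-|\nabla\psi|^2)}V_0^{p+1}$ is the dominant term, while $|\nabla V_0|$, $|\partial_s V_J - \partial_s V_0|$ and $|\nabla V_J|$ are all bounded by subcritical powers of $V_0$ (using~\eqref{V1}, \eqref{v4}, \eqref{v5bis}), and so can be absorbed by $\tfrac14|\partial_s V_0|^2$ up to an $s$-uniform constant $C$. The remainder $w$ contributes an $H^1$-bounded function $g \lesssim |\partial_s w| + |\nabla w|$ through a Peter--Paul inequality, and~\eqref{festV1} together with~\eqref{fRegv} ensure $g\in L^\infty((0,\delta_0),H^1(\R^N))$. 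Combining these ingredients with~\eqref{eq:V} gives~\eqref{fNFestinf} almost everywhere on $(0,\delta_0)\times\R^N$.
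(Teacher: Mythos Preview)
Your overall strategy---compactness after uniform energy estimates for solutions initialized at $s_n\downarrow 0$---matches the paper's, and your treatment of~\eqref{fNFestinf} is essentially correct. However, the energy functional you propose will not close, and the obstacle is not the one you identify. The real difficulty is the linearized potential: writing $f(V_J+w)-f(V_J)=f'(V_0)w+\text{(higher order)}$, one has $f'(V_0)=pV_0^{p-1}\sim s^{-2}$ on $\{|x|<1\}$ with the \emph{focusing} sign. If you keep this term on the right and multiply by $\partial_s w$, it contributes $s^{-2}\|w\|_{L^2}\|\partial_s w\|_{L^2}$, not $s^{-1}\hnorm$; if instead you absorb it via the natural Hamiltonian correction $-\int[F(V_J+w)-F(V_J)-f(V_J)w]\approx-\tfrac12\int f'(V_0)w^2$, coercivity is destroyed. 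Either way the bootstrap does not close. By contrast, the cross term $\nabla\psi\cdot\nabla\partial_s w$ that you flag as the main obstacle is comparatively harmless: after one integration by parts it contributes only $O(s^{-1}\|\nabla\psi\|_{L^\infty})\norm^2$, which~\eqref{c:psi} makes small.

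The paper's resolution is to conjugate by the weight $Q=(1-\chi+V_0)^{p+1}$: rewriting the equation for $w$ as~\eqref{eq:w2}, the effective potential $G=f'(V_0)Q^{1/2}-(1-|\nabla\psi|^2)\partial_{ss}(Q^{1/2})$ becomes \emph{bounded} (the ODE profile is precisely the right conjugation). The first-order functional $\energy$ is then built from $Q\partial_s(Q^{-1/2}w)$ and $Q\nabla(Q^{-1/2}w)$, together with an extra term $\tfrac{\lambda}{16}s^{-2}\int Qw^2$ whose negative $s$-derivative supplies the sign needed to absorb the remaining $s^{-1}$ contributions with the small constant $\tfrac{\lambda}{4}$; the unweighted higher-order energies $\higher_\ell$ are coupled to $\energy$ through the coercivity Lemma~\ref{le:WW}. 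Two further points you omit: the nonlinearity is first truncated to a globally Lipschitz $f_n$ so that global $H^2\times H^1$ solutions exist before any estimate is available, and the equation is solved \emph{forward} in $s$ from $S_n$, not backward.
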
 

We construct the solution $v$ of Proposition~\ref{eExisV} by a compactness argument. 
For any $n$ large, let $S_n=\frac 1n< s_J$ and
\begin{equation*}
B_n=\sup_{s\in [S_n,s_J]}\|V_J(s)\|_{L^\infty} \quad
\mbox{so that}\quad \lim_{n\to\infty} B_n=\infty.
\end{equation*}
We let $n$ be sufficiently large so that $B_n\geq 1$, and we define the function $f_n:\R\to [0,\infty)$ by
\begin{equation*}
f_n(u)=f(u)\chi\left(\frac {|u|}{B_n}\right)\quad
\mbox{so that}\quad
f_n(u)=\begin{cases}f(u)&\mbox{for $|u|<B_n$}\\ 0 &\mbox{for $|u|>2B_n$.} \end{cases}
\end{equation*}
Let $F_n(u)=\int_0^u f_n(w) dw$.
Note that Taylor's estimates such as \eqref{taylor0}--\eqref{taylor} still hold for $F_n$ and $f_n$
with constants independent of $n$. 
We will refer to these inequalities for $F_n$ and $f_n$ with the same numbers~\eqref{taylor0}, \eqref{taylor1} and~\eqref{taylor}. 
In this proof, any implicit constant related the symbol $\lesssim$ is independent of $n$.

We define the sequence of solution $v_n$ of
\begin{equation}\label{eq:vn}\left\{
\begin{aligned}
&(1-|\nabla \psi|^2) \partial_{ss} v_n - 2 \nabla \psi\cdot \nabla \partial_s v_n
-(\Delta \psi) \partial_s v_n - \Delta v_n = f_n(v_n)\\
&v_n(S_n)=V_J(S_n),\quad \partial_s v_n(S_n)=\partial_s V_J(S_n).
\end{aligned}\right.
\end{equation}
The nonlinearity $f_n$ being globally Lipschitz,
the existence of a global solution $(v_n,\partial_s v_n)$ in $H^2\times H^1$ is a consequence of standard arguments
from semigroups theory, see   Appendix~\ref{sAppA}, and in particular Section~\ref{sAppAexist}.

We set, for all $s\in [S_n,s_J]$,
\begin{equation*}
v_n(s)=V_J(s)+w_n(s),
\end{equation*}
thus $(w_n,\partial_s w_n)\in \mathcal C([S_n,s_J],H^2(\R^N)\times H^1(\R^N))
\cap  \mathcal C^1([S_n,s_J],H^1(\R^N)\times L^2(\R^N))$.
The crucial step in the proof of Proposition~\ref{eExisV} is the following estimate.

\begin{prop}\label{pr:unif}
There exist $C>0$, $n_0>0$ and $0<\delta_0<1$ such that
\begin{equation}\label{e:unif}
 \|w_n(s)\|_{H^2}^2+\|\partial_s w_n(s)\|_{H^1}^2+\|\partial_{ss} w_n(s)\|_{L^2}^2\leq C (s-S_n)^{ \lambda}
\end{equation}
for all $n\geq n_0$ and  $s\in [S_n,S_n+\delta_0]$.
\end{prop}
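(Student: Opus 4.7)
The strategy is a bootstrap energy estimate for $w_n=v_n-V_J$. Since $|V_J(s)|\le B_n$ on $[S_n,s_J]$ by definition of $B_n$, the cutoff gives $f_n(V_J)=f(V_J)$ there, so subtracting the defining identity of $\Ens_J$ from the equation of $v_n$ yields
\begin{equation*}
(1-|\nabla\psi|^2)\partial_{ss}w_n - 2\nabla\psi\cdot\nabla\partial_s w_n - (\Delta\psi)\partial_s w_n - \Delta w_n = \bigl[f_n(V_J+w_n)-f_n(V_J)\bigr] + \Ens_J,
\end{equation*}
with $w_n(S_n)=0$ and $\partial_s w_n(S_n)=0$. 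By the Cauchy theory of Appendix~\ref{sAppA}, $w_n$ is a genuine $H^2\times H^1$ solution, so all manipulations below are legitimate. I would bootstrap on the natural weighted energy at the $H^2\times H^1$ level,
\begin{equation*}
\Map(s)=\sum_{|\beta|\le 1}\int\Bigl[\tfrac12(1-|\nabla\psi|^2)\bigl|\partial_s\partial^\beta_x w_n\bigr|^2+\tfrac12\bigl|\nabla\partial^\beta_x w_n\bigr|^2\Bigr]dx,
\end{equation*}
under the a priori assumption $\Map(s)\le 2C(s-S_n)^\lambda$; the $H^2$-norm of $w_n$ and the $H^1$-norm of $\partial_s w_n$ are then controlled by $\Map(s)$ and the zero initial data (integrating $\partial_s w_n$ in $s$ gives the $L^2$ pieces).

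For each $|\beta|\le 1$, differentiate the PDE by $\partial^\beta_x$ and pair with $\partial_s\partial^\beta_x w_n$. A key structural cancellation is that, for any $W$, the antisymmetric part $\int(-2\nabla\psi\cdot\nabla\partial_s W-(\Delta\psi)\partial_s W)\partial_s W\,dx$ vanishes after one integration by parts, so only commutator terms involving $\psi$ remain; these are bounded by $C\|\psi\|_{\cont^{q_0}}\|\nabla\psi\|_{L^\infty}\Map(s)$, and the smallness condition~\eqref{c:psi} keeps the resulting coefficient tame. The nonlinear source is decomposed via~\eqref{taylor0}--\eqref{taylor} as $f_n(V_J+w_n)-f_n(V_J)=f_n'(V_J)w_n+R$ with $|R|\lesssim V_J^{p-2}w_n^2+|w_n|^p$. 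The linear part, $\int f_n'(V_J)w_n\partial_s w_n\,dx=\tfrac{d}{ds}\int\tfrac12 f_n'(V_J)w_n^2\,dx-\tfrac12\int f_n''(V_J)\partial_s V_J\,w_n^2\,dx$, is absorbed by modifying $\Map$ with $-\tfrac12\int f_n'(V_J)w_n^2\,dx$ (and analogously for $|\beta|=1$); the residual commutator is controlled by $\|\partial_s V_J/V_J\|_{L^\infty}\int V_J^{p-1}w_n^2\,dx\lesssim s^{-1}\Map(s)$ after Hardy-type estimates relating $V_J^{p-1}w_n^2$ to $\|\nabla w_n\|_{L^2}^2$. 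The super-quadratic remainder $R$ is estimated in $L^2$ using the Sobolev embedding $H^2\hookrightarrow L^q$ for every finite $q$ (valid for $1\le N\le 4$), so that under the bootstrap $\|w_n\|_{L^\infty}=o(1)$ and these contributions are strictly lower order.

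The error term $\Ens_J$ is controlled via Lemma~\ref{le:3}(iii) and~\eqref{v3}: the choice $J=\lfloor(2p+2)/(p-1)\rfloor$ together with~\eqref{on:kk} is designed precisely so that $\|\Ens_J(s)\|_{L^2}+\|\nabla\Ens_J(s)\|_{L^2}\lesssim s^{\lambda/2-1+\varepsilon}$ for some $\varepsilon>0$, which yields, after Cauchy--Schwarz, a contribution of $O((s-S_n)^{\lambda})$ to $\Map$ upon integration. Assembling everything, $\Map$ satisfies a Gronwall inequality of the form $\Map'(s)\le Cs^{-1}\Map(s)+Cs^{\lambda/2-1+\varepsilon}\Map(s)^{1/2}$ with $\Map(S_n)=0$, which on a short interval $[S_n,S_n+\delta_0]$ with $\delta_0>0$ independent of $n$ gives $\Map(s)\le C(s-S_n)^\lambda$ and closes the bootstrap for $n$ large. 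Finally, the $L^2$ bound on $\partial_{ss}w_n$ follows algebraically: solving the PDE for $\partial_{ss}w_n$ and using $(1-|\nabla\psi|^2)^{-1}\in L^\infty$ from~\eqref{c:psi}, one obtains
\begin{equation*}
\|\partial_{ss}w_n\|_{L^2}\lesssim\|w_n\|_{H^2}+\|\partial_s w_n\|_{H^1}+\|f_n(V_J+w_n)-f_n(V_J)\|_{L^2}+\|\Ens_J\|_{L^2},
\end{equation*}
and all quantities on the right are already bounded by $C(s-S_n)^{\lambda/2}$. The main obstacle I anticipate is the delicate management of the linearized potential $f_n'(V_J)\sim V_J^{p-1}\sim s^{-2}$: the quadratic correction to $\Map$ must remain coercive (which is where $\|\nabla\psi\|_{L^\infty}$ must be small and Hardy control must be tight) and the resulting Gronwall inequality must deliver exactly the exponent $\lambda$ chosen in~\eqref{on:lam}, rather than something weaker; balancing these constraints explains both the specific value of $\lambda$ in~\eqref{on:lam} and the condition~\eqref{on:kk} on $k$.
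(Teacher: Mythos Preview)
Your outline has a genuine gap at exactly the place you flag as ``the main obstacle.'' The linearized potential $f_n'(V_J)\sim pV_0^{p-1}=p\kappa^{p-1}(s+A(x))^{-2}$ equals $p\kappa^{p-1}s^{-2}$ on the whole ball $\{|x|<1\}$ (where $A\equiv 0$); the singularity is in the \emph{time} variable $s$, not in $x$. Hence there is no Hardy-type inequality giving $\int V_J^{p-1}w_n^2\lesssim\|\nabla w_n\|_{L^2}^2$, and your modified energy $\Map-\tfrac12\int f_n'(V_J)w_n^2$ is not coercive. Indeed, already in the ODE model $\ddot w=\tfrac{2p(p+1)}{(p-1)^2}s^{-2}w$ the quantity $\tfrac12\dot w^2-\tfrac{p(p+1)}{(p-1)^2}s^{-2}w^2$ is negative on the mode $w=s^{(p+1)/(p-1)}$. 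A second, related problem is the Gronwall step: an inequality of the form $\Map'\le Cs^{-1}\Map+\dots$ with an \emph{uncontrolled} constant $C$ does not close the bootstrap at exponent~$\lambda$, since under the bootstrap hypothesis it yields $\tfrac{C}{\lambda}s^\lambda$ after integration, and one needs the prefactor to be strictly less than~$1$.

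The paper resolves both issues by introducing the weight $Q=(1-\chi+V_0)^{p+1}$ and working with $Q^{-1/2}w$. The point is the algebraic identity: because $(1-|\nabla\psi|^2)\partial_{ss}V_0=V_0^p$, one has $f'(V_0)Q^{1/2}-(1-|\nabla\psi|^2)\partial_{ss}(Q^{1/2})=G$ with $\|G\|_{L^\infty}\lesssim 1$, so the dangerous $s^{-2}$ potential is absorbed into the weight rather than left in the energy. The weighted norm $\norm$ contains the term $\tfrac{\lambda}{16}s^{-2}\int Qw^2$, which provides the missing $L^2$ control; and the time derivative $\partial_sQ\le 0$ produces a favorable sign (the term $I_2$ in the paper). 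The only remaining $s^{-1}$ contribution carries the small explicit coefficient $\tfrac{\lambda}{4}$ (coming from the smallness~\eqref{c:psi} of $\nabla\psi$ and Cauchy--Schwarz on $I_{11}$), which is what allows the bootstrap to close at exponent~$\lambda$. Your higher-order functionals and your treatment of $\Ens_J$ are in the right spirit, but without the $Q$-weighted reformulation the lowest-order energy step cannot be made to work.
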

\begin{proof}
 We fix $n\geq n_0$ large, and we denote $w_n$ simply by $w$ in this proof.
By \eqref{eq:vn} and the definition of $\Ens_J$, $w$ satisfies the equation
\begin{equation}\label{eq:w}
\left\{
\begin{aligned}
&(1-|\nabla \psi|^2) \partial_{ss} w -   2 \nabla \psi\cdot \nabla \partial_s w 
-(\Delta \psi) \partial_s w - \Delta w \\ & \hskip 4.5truecm = f_n(V_J+w)-f_n(V_J)+\Ens_J \\
&w(S_n)=0,\quad \partial_s w(S_n)=0.
\end{aligned}
\right.
\end{equation}
We define the auxiliary function $Q$ as follows
\[
Q=(1-\chi+V_0)^{p+1},
\]
where, by abuse of notation, we denote $\chi (x) = \chi (  |x| )$. Note that $Q \gtrsim 1$.
We make the following preliminary observation
\begin{align*}
\partial_{ss}w
&=\partial_{ss}[Q^{\frac 12}(Q^{-\frac 12}w)]
=\partial_{ss}(Q^{\frac 12})(Q^{-\frac 12}w)+2\partial_{s}(Q^{\frac 12})\partial_s(Q^{-\frac 12}w)
+Q^{\frac 12}\partial_{ss}(Q^{-\frac 12}w)\\
&=\partial_{ss}(Q^{\frac 12})(Q^{-\frac 12}w)+Q^{-\frac 12}\partial_s[Q\partial_s(Q^{-\frac 12}w)].
\end{align*}
Thus, setting
\[
G=f'(V_0)Q^{\frac 12}-(1-|\nabla \psi|^2) \partial_{ss}(Q^{\frac 12})
\]
(by the definition of $Q$ and $V_0$, we expect $G$ to be small in some sense),
we rewrite the equation of $w$ as follows
\begin{multline}\label{eq:w2}
(1-|\nabla \psi|^2)\partial_s[Q\partial_s(Q^{-\frac 12}w)]
= Q^{\frac 12}[2 \nabla \psi\cdot \nabla \partial_s w
+(\Delta \psi) \partial_s w +\Delta w]\\
+Q^{\frac 12}[ f_n(V_J+w)-f_n(V_J)-f_n'(V_0)w]+Gw +Q^{\frac 12}\Ens_J.
\end{multline}
The nonlinear term 
$f_n(V_J+w)-f_n(V_J)-f_n'(V_0)w$ is mostly quadratic in $w$ (some linear terms in $w$ remain but they are also small in $V_J-V_0$), which is an important gain with respect to the previous formulation.

 We define the following energy functional related to the above formulation of the equation of $w$
\begin{multline*}
\energy= \int \Bigl\{(1-|\nabla \psi|^2)[Q\partial_{s}(Q^{-\frac12}w)]^2+Q^2|\nabla (Q^{-\frac12}w)|^2+\frac{\lambda}{16} s^{-2}Qw^2\\
-Q\left[2F_n(V_J+w)-2F_n(V_J)-2F_n'(V_J)w-F_n''(V_0)w^2\right]\Bigr\}.
\end{multline*}
We also define a weighted norm related to the above functional
\[
\norm=\left(\int [Q\partial_{s}(Q^{-\frac12}w)]^2+Q^2|\nabla (Q^{-\frac12} w)|^2+\frac{\lambda}{16} s^{-2}Qw^2\right)^{\frac 12}.
\]
Since we may be dealing with $H^1\times L^2$ supercritical nonlinearities (but $H^2\times H^1$ subcritical by the condition $1\leq N\leq 4$),
we need higher order energy functionals. We set
\begin{align*}
\higher_0&=\int \left\{ (1-|\nabla \psi|^2) (\partial_{ss} w)^2 + |\nabla \partial_s w|^2\right\},\\
\higher_ \ell &=\int \left\{ (1-|\nabla \psi|^2) (\partial_{s}\partial_{x_ \ell } w)^2 + |\nabla \partial_{x_ \ell } w|^2\right\},
\quad 1\leq \ell \leq N,
\end{align*}
and
\[
\higher=\sum_{ \ell =0}^N \higher_ \ell ,\quad
\mathcal M=\left( \|w\|_{H^2}^2+\|\partial_s w\|_{H^1}^2+\|\partial_{ss} w\|_{L^2}^2 \right)^{\frac 12}.
\]

For future reference, we establish two estimates on $\partial_s Q$ and $\nabla Q$.
By the expression of $V_0$ in \eqref{eq:V}, we have 
\begin{equation} \label{OMN2} 
\begin{split} 
\partial_s Q&=(p+1)\partial_sV_0(1-\chi+V_0)^p=(p+1)(\partial_sV_0)Q^{\frac{p}{p+1}}\\
&=-\sqrt{2(p+1)}(1-|\nabla\psi|^2)^{-\frac12}V_0^{\frac{p+1}2}Q^{\frac{p}{p+1}}.
\end{split} 
\end{equation} 
Thus, since $Q^{\frac{p-1}{2(p+1)}} \lesssim s^{-1}$,
\begin{equation}\label{dsQ}
|\partial_s Q|\lesssim V_0^{\frac{p+1}2}Q^{\frac{p}{p+1}}\lesssim Q^{1+\frac{p-1}{2(p+1)}}\lesssim s^{-1}Q.
\end{equation}
Similarly, by \eqref{V1},
\begin{equation}\label{gradQ}
|\nabla Q|=(p+1)|\nabla V_0|(1-\chi+V_0)^p\lesssim V_0^{1+\frac{p-1}{2k}}Q^{\frac{p}{p+1}}\\
\lesssim Q^{1+\frac1k\frac{p-1}{2(p+1)}}\lesssim s^{-\frac1k}Q,
\end{equation}
and
\begin{equation}\label{graddsQ}
|\Delta Q|\lesssim s^{-\frac 2k} Q,\quad
|\nabla\partial_s Q|\lesssim s^{-1-\frac1k}Q.
\end{equation}

\medskip

\textbf{Step 1. Coercivity.}
We claim the following estimates.
\begin{lem}\label{le:WW}
It holds
\begin{equation}\label{WW5}
\hnorm^2\lesssim \norm^2+\higher.
\end{equation}
For $0<\delta\leq s_J$ and $0<\omega\leq 1$ sufficiently small, for $n$ large, if 
$\norm\leq \omega$ and $\hnorm\leq \omega$, then
\begin{equation}\label{coer}
2\energy+ \higher \geq  \norm^2.
\end{equation}
\end{lem}
\begin{proof}
First, we prove the following estimates.
For any $\rho\geq 0$, the following holds on $[S_n,\delta_0]$,
\begin{align}
\int Q^\rho|\nabla w|^2
&\lesssim\int Q^{\rho+1}|\nabla (Q^{-\frac12} w)|^2+\int Q^{\rho+\frac{p-1}{k(p+1)}} w^2,\label{WW1}\\
\int Q^{\rho+1}|\nabla (Q^{-\frac12} w)|^2
&\lesssim\int Q^\rho|\nabla w|^2+\int Q^{\rho+\frac{p-1}{k(p+1)}} w^2,\label{WW2}\\
\int Q^\rho|\partial_s w|^2
&\lesssim\int Q^{\rho+1}|\partial_s (Q^{-\frac12} w)|^2+ \int Q^{\rho+\frac{p-1}{p+1}} w^2,\label{WW3}\\
\int Q^{\rho+1}|\partial_s(Q^{-\frac12} w)|^2
&\lesssim\int Q^\rho|\partial_s w|^2+\int Q^{\rho+\frac{p-1}{p+1}} w^2.\label{WW4}
\end{align}
We have, using~\eqref{gradQ},
\begin{align*}
\int Q^\rho|\nabla w|^2
&=\int Q^\rho|Q^{\frac 12} \nabla(Q^{-\frac 12}w)+(\nabla Q^{\frac 12}) Q^{-\frac 12} w|^2\\
&\lesssim\int Q^{1+\rho}|\nabla(Q^{-\frac 12}w)|^2+\int Q^{\rho-1}|\nabla Q^\frac12|^2 w^2\\
&\lesssim\int Q^{1+\rho}|\nabla (Q^{-\frac12} w)|^2+\int Q^{\rho+\frac {p-1}{k(p+1)}}w^2.
\end{align*}
This proves \eqref{WW1} and the proof of \eqref{WW2} is similar.
Moreover, using~\eqref{dsQ}, 
\begin{align*}
\int Q^\rho|\partial_s w|^2
&=\int Q^\rho|Q^{\frac 12}\partial_s(Q^{-\frac 12}w)+(\partial_s Q^{\frac 12}) Q^{-\frac 12} w|^2\\
&\lesssim\int Q^{\rho+1}\partial_s(Q^{-\frac 12}w)|^2+\int Q^{\rho-1}|\partial_s Q^\frac12|^2 w^2\\
&\lesssim\int Q^{\rho+1}|\partial_s(Q^{-\frac12} w)|^2+\int Q^{\rho+\frac{p-1}{p+1}}w^2,
\end{align*}
which proves \eqref{WW3}; the proof of \eqref{WW4} is similar.

We prove \eqref{WW5}.
The inequality $\|w\|_{L^2}\lesssim \norm$ is obvious.
Next, \eqref{WW3} with $\rho=1$ and $Q^{\frac{p-1}{p+1}}\lesssim s^{-2}$ show that
$\int |\partial_s w|^2\lesssim \int Q |\partial_s w|^2 \lesssim \norm^2$.
Since $\|\nabla \psi\|_{L^\infty}\leq \frac12$ (from \eqref{c:psi}),
it follows (using $ \|w\| _{ H^2 } \lesssim  \|\Delta w\| _{ L^2 } + \| w\| _{ L^2 }$) that
$\hnorm^2 \lesssim \norm^2+\higher$, which is~\eqref{WW5}.

Last, we prove \eqref{coer}.
Let
\begin{equation}\label{def:A1}
A_1=\left|F_n(V_J+w) -F_n(V_J)-F_n'(V_J)w-\frac12F_n''(V_0)w^2\right|.
\end{equation}
The triangle inequality and the Taylor inequality \eqref{taylor0} yield 
\begin{equation*} 
\begin{split} 
A_1
\lesssim & \left|F_n(V_J+w) -F_n(V_J)-F_n'(V_J)w-\frac{F_n''(V_J)}2w^2\right|  +|F_n''(V_J)-F_n''(V_0)| w^2\\
\lesssim & \Lambda_1,
\end{split} 
\end{equation*} 
where
\begin{equation}\label{def:La1}
\Lambda_1=|w|^{p+1}+V_0^{p-\bar p}|w|^{\bar p+1}
+V_0^{p-2}|V_J-V_0|w^2.
\end{equation}
From \eqref{v5}, $V_J\lesssim V_0$ and 
$|V_J-V_0|\lesssim (1+V_0)^{-\frac{p-1}4}V_0\lesssim Q^{-\frac{p-1}{4(p+1)}}V_0$.
Moreover, $V_0^{p+1}\leq Q$. Thus,
\begin{equation}\label{sur:La1}
\Lambda_1\lesssim
|w|^{p+1}+Q^{\frac{p-\bar p}{p+1}}|w|^{\bar p+1}+ Q^{\frac34\frac{p-1}{p+1}}w^2,
\end{equation}
and so
\begin{equation*}
\int Q\Lambda_1
\lesssim \int Q|w|^{p+1}+\int Q^{1+\frac{p-\bar p}{p+1}}|w|^{\bar p+1}
 +\int Q^{1+\frac34\frac{p-1}{p+1}}w^2.
\end{equation*}
For the first term, we prove the following general estimate:
for any $0< \zeta\leq 1$,
\begin{equation} \label{ouf}
\int Q^{(1-\zeta) \frac{2p}{p+1}} |w|^{p+1}
\lesssim \norm^{p+1}+\hnorm^{p+1}.
\end{equation}
Indeed, using H\"older's inequality and the embedding $H^2(\R^N) \hookrightarrow L^q(\R^N)$ for $2\le q<\infty $ (recall that $N\le 4$), 
\begin{equation*} 
\begin{split} 
\int Q^{(1-\zeta) \frac{2p}{p+1}} |w|^{p+1}
& \lesssim s^{-2(1-\zeta)}  \int Q^{1-\zeta}  {  |w| }^{p+1}
\\&\lesssim s^{-2(1-\zeta)} \left(\int Q w^2\right)^{1-\zeta}\left(\int {  |w| }^{\frac{p-1}\zeta+2}\right)^{\zeta} \\
&\lesssim  \norm^{2(1-\zeta)} \hnorm^{p-1+2\zeta}
\lesssim \norm^{p+1}+\hnorm^{p+1}.
\end{split} 
\end{equation*} 
In particular,  from \eqref{ouf}, it holds 
\begin{equation*}
\int Q  |w|^{p+1}\lesssim  \norm^{p+1}+\hnorm^{p+1}.
\end{equation*}
In the case $1<p\leq 2$, one has $\bar p=p$ and the second term is identical to the first one.
In the case $p>\bar p=2$, the second term is estimated as follows.
Using the inequality $ |w|^3\le aw^2 + a^{-(p-2)}  |w|^{p+1}$ with $a= \varepsilon Q^{\frac {1} {p-1}}$, $\varepsilon >0$ to be chosen later, and the estimate $Q^{\frac{p-1}{ p+1 }}\lesssim s^{-2}$, we see that
\begin{align*}
Q^{\frac{2p-1}{p+1}}|w|^3
&\lesssim \varepsilon  Q^{\frac{p-1}{ p+1}}Qw^2+ \varepsilon ^{-(p-2) } Q |w|^{p+1}\\
&\lesssim \varepsilon  s^{-2}Q w^2+ \varepsilon ^{-(p-2) } Q |w|^{p+1},
\end{align*}
and so, using \eqref{ouf}
\begin{equation}\label{ouf2}
\int Q^{\frac{2p-1}{p+1}}|w|^{\bar p+1}
\lesssim  \varepsilon \norm^2+ \varepsilon ^{-(p-2) } \left(\norm^{p+1}+ \hnorm^{p+1}\right).
\end{equation}
Last, since $Q^{\frac{p-1}{2(p+1)}}\lesssim s^{-1}$, we observe that
\[
\int Q^{\frac34\frac{p-1}{p+1}+1}w^2\lesssim s^{-\frac 32} \int Q w^2\lesssim  s^{\frac 12} \norm^2.
\]
In conclusion, we have obtained, for $\mathcal N\leq \omega$, $\mathcal M\leq \omega$, $S_n\leq s\leq \delta$,
\begin{align*}
 \int QA_1 &\lesssim \left( \varepsilon  +s^{\frac 12}\right) \norm^2 +(1 + \varepsilon ^{-(p-2) } ) \left(\norm^{p+1}+ \hnorm^{p+1}\right)\\
 & \lesssim \left( \varepsilon  +\delta^{\frac 12}\right) \norm^2+ (1 + \varepsilon ^{-(p-2) } )  \omega ^{p-2} \hnorm^2,
\end{align*}
which, combined with \eqref{WW5}, implies that
for   $\delta>0$ and $\omega>0$ small enough, it holds
$2 \energy + \higher\geq  \norm^2$ on $[S_n,\delta_0]$.
(Recall that $1-  |\nabla \psi |^2 \ge \frac {3} {4}$ by~\eqref{c:psi} and $\lambda \le \frac {1} {2}$.)
\end{proof}

\textbf{Step 2. Energy control.}
We claim that  there exist $C>0$ such that 
\begin{equation}\label{energy}
\frac{d\energy}{ds}\leq 
C s^{-1+\lambda}\norm+
\frac\lambda 4 s^{-1}  \norm^2  +Cs^{-\frac12} \norm^2
+Cs^{-1} \left(\norm^{p+1}+\hnorm^{p+1}\right) 
\end{equation}
provided $\norm \le \omega $ and $\hnorm \le \omega $ with $\omega $ sufficiently small.

\emph{Proof of \eqref{energy}.}
We compute $\frac{d\energy}{ds}$:
\begin{align*}
\frac 12 \frac{d\energy}{ds}&=
\int \Bigl\{(1-|\nabla \psi|^2)Q\partial_{s}(Q^{-\frac12}w)\partial_s[Q\partial_{s}(Q^{-\frac12}w)]\\
&\qquad+Q^2\nabla(Q^{-\frac12} w)\cdot\partial_s[\nabla(Q^{-\frac12} w)]
+\frac\lambda{16} s^{-2}Q^{\frac32}w\partial_s(Q^{-\frac12}w)\\
&\qquad-Q^{\frac 32}\left[f_n(V_J+w)-f_n(V_J)-f_n'(V_0)w\right]\partial_s(Q^{-\frac12}w)\Bigr\}\\
&\quad +\int(\partial_sQ)Q|\nabla(Q^{-\frac12} w)|^2+\frac\lambda{32} s^{-2}(\partial_sQ)w^2-
{\frac \lambda{16} } s^{-3}Qw^2\\
&\quad -\frac12\int\partial_sQ\left[2F_n(V_J+w)-2F_n(V_J)-2F_n'(V_J)w-F_n''(V_0)w^2\right]\\
&\quad -\frac12\int\partial_sQ\left[f_n(V_J+w)-f_n(V_J)-f_n'(V_0)w\right]w\\
&\quad -\frac12\int Q\partial_sV_0\left[2f_n(V_J+w)-2f_n(V_J)-2f_n'(V_J)w-f_n''(V_0)w^2\right]\\
&\quad -\frac12\int Q\partial_s(V_J- V_0 )\left[2f_n(V_J+w)-2f_n(V_J)-2f_n'(V_J)w\right]\\
&=I_1+I_2+I_3+I_4+I_5+I_6.
\end{align*}

First, we remark the negative contribution of $I_2$. Since $\partial _sQ\le 0$ by~\eqref{OMN2}, we have
\begin{equation} \label{surI2}
I_2 \le -\frac\lambda{16} s^{-3}\int Qw^2.
\end{equation} 

Second, we compute $I_1$ using the equation \eqref{eq:w2} of $w$
\begin{align*}
I_1& = \int Q^{\frac 32} \partial_{s}(Q^{-\frac12}w)[2 \nabla \psi\cdot \nabla \partial_s w
+(\Delta \psi) \partial_s w ]\\
& \quad+\int\left\{Q^{\frac 32} \partial_{s}(Q^{-\frac12}w)(\Delta w)+Q^2\nabla(Q^{-\frac12} w)\cdot\nabla[\partial_s(Q^{-\frac12} w)]\right\} \\
& \quad+\int Q\partial_{s}(Q^{-\frac12}w) Gw +\int Q^{\frac 32}\partial_{s}(Q^{-\frac12}w)\Ens_J
+\frac\lambda{16} s^{-2}\int Q^{\frac32}w\partial_s(Q^{-\frac12}w)\\
&=I_7+I_8+I_9+I_{10}+I_{11}.
\end{align*}

For $I_7$, we first observe that
\begin{align*}
&2\int Q^{\frac 32}\partial_s(Q^{-\frac12}w)(\nabla\psi\cdot\nabla\partial_s w)
=2\int Q^2\partial_s(Q^{-\frac12}w)(\nabla\psi\cdot\nabla\partial_s (Q^{-\frac 12}w))\\
&\quad+ \int Q (\partial_s(Q^{-\frac12}w))^2 (\nabla\psi\cdot\nabla Q)
+ \int Q \partial_s Q \partial_s(Q^{-\frac12}w) (\nabla\psi\cdot\nabla(Q^{-\frac 12}w))\\
&\quad+ \int Q^\frac12w\partial_s(Q^{-\frac12}w)(\nabla\psi\cdot\nabla\partial_s Q)
-\frac12\int Q^{-\frac 12}\partial_s Qw\partial_s(Q^{-\frac12}w)(\nabla\psi\cdot\nabla Q).
\end{align*}
Second, by integration by parts,
\begin{align*}
&2\int Q^2\partial_s(Q^{-\frac12}w)(\nabla\psi\cdot\nabla\partial_s (Q^{-\frac 12}w))
= \int Q^2 \nabla \psi \cdot \nabla  [  ( \partial _s ( Q^{- \frac {1} {2} } w))^2 ] 
\\&
=-\int Q^2\Delta \psi [\partial_s(Q^{-\frac12}w)]^2
-2\int Q[\partial_s(Q^{-\frac12}w)]^2(\nabla\psi\cdot\nabla Q).
\end{align*}
By the definition of $\norm$, we estimate
\begin{equation*}
\left|\int Q^2\Delta \psi [\partial_s(Q^{-\frac12}w)]^2\right|\lesssim \norm^2.
\end{equation*}
Using \eqref{gradQ}, we also have
\begin{equation*}
\left|\int Q (\partial_s(Q^{-\frac12}w))^2 (\nabla\psi\cdot\nabla Q)\right|\lesssim s^{-\frac 1k} \norm^2.
\end{equation*}
Now, by the expressions of $Q$ and $V_0$, we have
\begin{align*}
|\partial_s Q|&=(p+1)|\partial_s V_0| (1-\chi+V_0)^p
=2\frac{p+1}{p-1}(s+A(x))^{-1}V_0(1-\chi+V_0)^p
\\&\leq 2\frac{p+1}{p-1}s^{-1}Q,
\end{align*}
and thus
\begin{align*}
&\left|\int Q \partial_s Q \partial_s(Q^{-\frac12}w) (\nabla\psi\cdot\nabla(Q^{-\frac 12}w))\right|\\&
\leq 2\frac{p+1}{p-1} s^{-1}\int |\nabla \psi|Q^2|\partial_s(Q^{-\frac12}w)||\nabla(Q^{-\frac 12}w)|
\leq \frac{p+1}{p-1} s^{-1}\| \nabla\psi\|_{L^\infty}\norm^2.
\end{align*}
Similarly, using \eqref{dsQ}, \eqref{gradQ}, \eqref{graddsQ}
\begin{align*}
\left|\int Q^\frac12w\partial_s(Q^{-\frac12}w)(\nabla\psi\cdot\nabla\partial_s Q)\right|&
\lesssim s^{-\frac1k}\left(\int Q^2|\partial_s(Q^{-\frac 12}w)|^2+s^{-2}\int Qw^2\right)
\\&\lesssim s^{-\frac1k} \norm^2,
\end{align*}
and
\[
\left|\int Q^{-\frac 12}\partial_s Qw\partial_s(Q^{-\frac12}w)(\nabla\psi\cdot\nabla Q)\right|
\lesssim s^{-\frac1k}\norm^2.
\]
Using the same estimates and then \eqref{WW3}, we finish estimating $I_7$ as follows
\begin{align*}
\left|\int Q^{\frac 32} \partial_{s}(Q^{-\frac12}w)(\Delta \psi) \partial_s w\right|&\lesssim
\int Q^2|\partial_s(Q^{-\frac 12}w)|^2 + \int Q (\partial_s w)^2\\
&\lesssim\int Q^2|\partial_s(Q^{-\frac 12}w)|^2+s^{-2}\int Qw^2\lesssim \norm^2.
\end{align*}
Thus, for some constant $C>0$, using \eqref{c:psi},
\begin{equation*}
|I_7|\leq \frac{p+1}{p-1} s^{-1} \|\nabla\psi\|_{L^\infty}\norm^2 +Cs^{-\frac1k} \norm^2
\leq \frac{\lambda}8 s^{-1}\norm^2 +Cs^{-\frac1k} \norm^2.
\end{equation*}

Next, integrating by parts,
using the identities 
\begin{gather*} 
Q^2\nabla [\partial _s (Q^{- \frac {1} {2}} w )] = Q^{\frac {1} {2}} \nabla [ Q^{\frac {3} {2}} (\partial _s (Q^{- \frac {1} {2}} w ) ] -\frac {3} {2} \partial _s (Q^{- \frac {1} {2}} w ) \nabla Q , \\
- \nabla w + Q^{\frac {1} {2}} \nabla  ( Q^{- \frac {1} {2}} w) = - Q^{- \frac {1} {2}} \nabla (Q^{\frac {1} {2}} ),
\end{gather*} 
and integrating again by parts, we find
\begin{align*}
I_8&=-\int \nabla (Q^{\frac 32} \partial_{s}(Q^{-\frac12}w))\cdot \nabla w+\int Q^2\nabla(Q^{-\frac12} w)\cdot\nabla[\partial_s(Q^{-\frac12} w)]
\\
&=-\int [\nabla (Q^{\frac 32} \partial_{s}(Q^{-\frac12}w))\cdot \nabla(Q^{\frac 12})]\, Q^{-\frac 12}w
-\frac 32\int Q\partial_{s}(Q^{-\frac12}w)[ \nabla Q\cdot \nabla (Q^{-\frac 12}w)]\\
&=-\int Q \partial_s(Q^{-\frac 12} w)[\nabla(Q^{-\frac12}w)\cdot\nabla Q]+\int\Delta(Q^{\frac 12})Q w \partial_s(Q^{-\frac 12}w).
\end{align*}
By \eqref{gradQ} and the definition of $\norm$,
\[
\left|\int Q \partial_s(Q^{-\frac 12} w)[\nabla(Q^{-\frac12}w)\cdot\nabla Q]\right|\lesssim s^{-\frac 1k} \norm^2.
\]
Similarly, using \eqref{gradQ} and \eqref{graddsQ}, we have $|\Delta(Q^{\frac 12})Q^{\frac 32}|\lesssim|\nabla Q|^2+|\Delta Q|Q\lesssim s^{-\frac2k}Q^2$, and thus
\[
\left|\int\Delta(Q^{\frac 12})Q w \partial_s(Q^{-\frac 12}w)\right|
\lesssim \int Q^2[\partial_s(Q^{-\frac 12}w)]^2 + s^{-\frac4k}\int Qw^2
\lesssim \norm^2.
\]

For $I_9$, we start by an estimate of $G=f_n'(V_0)Q^{\frac 12}-(1-|\nabla \psi|^2) \partial_{ss}(Q^{\frac 12})$.
By the definition of $Q=(1-\chi+V_0)^{p+1}$ and \eqref{eq:V}, we observe
\begin{align*}
\partial_{ss}(Q^{\frac 12})
&=\partial_{ss}[(1-\chi+V_0)^{\frac{p+1}2}]\\
&=\frac{p+1}2\frac{p-1}2(\partial_s V_0)^2(1-\chi+V_0)^{\frac{p-3}2}
+\frac{p+1}2\partial_{ss}V_0(1-\chi+V_0)^{\frac{p-1}2}
\\
&=(1-|\nabla \psi|^2)^{-1}\left[\frac{p-1}2 V_0^{p+1}Q^{\frac{p-3}{2(p+1)}}
+\frac{p+1}2V_0^{p}Q^{\frac{p-1}{2(p+1)}}\right].
\end{align*}
Thus,
\begin{equation*} 
\begin{split} 
G & = pV_0^{p-1}Q^{\frac 12} -\frac{p-1}2 V_0^{p+1} Q^{\frac{p-3}{2(p+1)}} -\frac{p+1}2 V_0^{p} Q^{\frac{p-1}{2(p+1)}}
\\&
= V_0^{p-1} Q^{ \frac {p-3} {2(p+1)} }  \Bigl[  p ( 1- \chi +V_0) + \frac {p-1} {2} V_0   \Bigr] (1 - \chi ).
\end{split} 
\end{equation*} 
For $|x|>1$, we have $V_0\lesssim 1$ and $Q\lesssim 1$; since also $Q \gtrsim 1$, we see that $\|G\|_{L^\infty}\lesssim 1$.
Therefore,
\[
|I_9|\lesssim \|G\|_{L^\infty}\norm^2\lesssim   \norm^2.
\]

For $I_{10}$, by the Cauchy-Schwarz inequality
\[
|I_{10}|=\left|\int Q^{\frac 32}\partial_{s}(Q^{-\frac12}w)\Ens_J\right|
\lesssim \| Q^{\frac 12}\Ens_J\|_{L^2} \norm,
\]
and we need only estimate $\| Q^{\frac 12}\Ens_J\|_{L^2}$. From \eqref{v3}, for $|x|\geq R$, we have
\[
Q^{\frac 12}|\Ens_J|\lesssim |\Ens_J|\lesssim |x|^{-\frac{2k}{p-1}-2}.
\]
Since $1\leq N\leq 4$, this implies $\|Q^{\frac 12}\Ens_J\|_{{L^2(|x| > R})}\lesssim 1$.

Next, from \eqref{v1}, for $|x|\leq R$, we have
\[
Q^{\frac 12}|\Ens_J|\lesssim Q^{\frac 12}V_0^{\frac{p+1}2+(-J+\frac{1+J}{k})\frac{p-1}2}
\lesssim V_0^{p+1+(-J+\frac{1+J}{k})\frac{p-1}2}
\lesssim V_0^{p+1+\frac1k \frac{p-1}2-J(1-\frac 1k)\frac{p-1}2}.
\]
Recall that by~\eqref{on:lam}, 
\[
- J \frac {p-1} {2} \le - \frac {p+3} {2} - \lambda (p-1)
\]
and that~\eqref{on:kk} is equivalent to
\[
\frac{p+1}k-\lambda(p-1)\left(1-\frac 1k\right) \le  -\frac{\lambda(p-1)}2.
\]
Thus, for $|x|\leq R$,
\begin{align*}
Q^{\frac 12}|\Ens_J|
& \lesssim V_0^{p+1+\frac1k \frac{p-1}2-\frac{p+3}{2}(1-\frac 1k) -\lambda(p-1)(1-\frac 1k)}
\lesssim V_0^{\frac{p-1}2 + \frac {p+1}k-\lambda(p-1)(1-\frac 1k)} \\ &
\lesssim V_0^{\frac{p-1}2  - \lambda\frac{p-1}2}
\lesssim (s+A(x))^{-1+\lambda}\lesssim s^{-1+\lambda}.
\end{align*}
It follows that
\begin{equation}\label{e:EJ}
\|Q^{\frac 12}\Ens_J\|_{L^2}\lesssim s^{-1+\lambda}.
\end{equation}
For this term, we have obtained
\[
|I_{10}|\lesssim\|Q^{\frac 12}\Ens_J\|_{L^2}\norm\lesssim s^{-1+\lambda}\norm.
\]

Finally, by the Cauchy-Schwarz inequality,
\[
|I_{11}|\leq s^{-1}\frac\lambda{16} \int Q^2 |\partial_s(Q^{-\frac 12}w)|^2
+s^{-3}\frac\lambda{16} \int Qw^2.
\]
Using \eqref{surI2}, we obtain
\[
|I_{11}|\leq \frac{\lambda}{16} s^{-1} \norm^2  - I_2  .
\]

In conclusion for $I_1+I_2$, we find
\[
I_1+I_2\leq C s^{-1+\lambda}\norm+\frac{3\lambda}{16}   s^{-1}  \norm^2 +Cs^{-\frac1k}\norm^2
.
\]

To continue with the proof of \eqref{energy}, we estimate the term $I_3$.
To that end, recall that $\bar p=\min(2,p)$. 
First, by~\eqref{def:A1}--\eqref{sur:La1} and~\eqref{dsQ}
\begin{align*}
\left|\partial_sQ \right| A_1 
&\lesssim \left|\partial_sQ\right|\Lambda_1\lesssim V_0^{\frac{p+1}2}Q^{\frac{p}{p+1}}\Lambda_1
\lesssim  Q^{\frac{3p+1}{2(p+1)}} \Lambda_1\\
&\lesssim Q^{\frac{3p+1}{2(p+1)}} |w|^{p+1}+Q^{\frac{3p+1}{2(p+1)}}Q^{\frac{p-\bar p}{p+1}}|w|^{\bar p+1} 
+Q^{\frac{9p-1}{4(p+1)}}w^2.
\end{align*}
Using \eqref{ouf}, the first term is controled as follows
\[
\int Q^{\frac{3p+1}{2(p+1)}}|w|^{p+1}
\lesssim \norm^{p+1}+\hnorm^{p+1}.
\]
In the case $1<p\leq 2$, one has $\bar p=p$ and the second term is identical to the first one.
In the case $p>\bar p=2$, using $Q^{\frac{p-1}{2(p+1) }}\lesssim s^{-1}$ and \eqref{ouf2},
\begin{align*}
\int Q^{\frac{3p+1}{2(p+1)}}Q^{\frac{p-\bar p}{p+1}}|w|^{\bar p+1}
&= \int Q^{\frac{5p-3}{2(p+1)}}|w|^{\bar p+1}\lesssim
s^{-1}\int Q^{\frac{2p-1}{p+1}}|w|^{\bar p+1}\\
&\lesssim   \varepsilon s^{-1} \norm^2+ \varepsilon ^{-(p-2) }  s^{-1}  \left(\norm^{p+1}+ \hnorm^{p+1}\right),
\end{align*}
where $\varepsilon >0$ is to be chosen.
Last, we observe that
$ Q^{\frac{9p-1}{4(p+1)}} w^2\lesssim s^{-\frac 52} Qw^2$,
and thus
\[
\int Q^{\frac{9p-1}{4(p+1)}} w^2 \lesssim s^{-\frac 12}\norm^2.
\]
In conclusion, we have proved 
\begin{align}
|I_3|&\lesssim \int |\partial_s Q| \Lambda_1 \lesssim \int  Q^{\frac{3p+1}{2(p+1)}} \Lambda_1 \nonumber \\
&\lesssim ( s^{-\frac 12}  + \varepsilon s^{-1} ) \norm^2+ (1+ \varepsilon ^{-(p-2)}s^{-1} ) \left(\norm^{p+1}+ \hnorm^{p+1}\right).\label{suite}
\end{align}

We proceed similarly for $I_4$. Indeed, setting
\begin{align*}
A_2&=\left|f_n(V_J+w)-f_n(V_J)-f_n'(V_J)w\right||w|\\
&\lesssim \left|f_n(V_J+w)-f_n(V_J)-f_n'(V_J)w\right||w| +|f_n'(V_0)-f_n'(V_J)| w^2,
\end{align*}
by \eqref{taylor1} and Taylor's inequality, 
\[
A_2\lesssim |w|^{p+1}+V_0^{p-\bar p}|w|^{\bar p+1}+V_0^{p-2}|V_J-V_0|w^2= \Lambda_1.
\]
Using  \eqref{suite}, we conclude that
\begin{equation*} 
|I_4| \lesssim \int |\partial_s Q| \Lambda_1 
\lesssim ( s^{-\frac 12}  + \varepsilon s^{-1} ) \norm^2+ (1+ \varepsilon ^{-(p-2)}s^{-1} ) \left(\norm^{p+1}+ \hnorm^{p+1}\right).
\end{equation*}

Now, we estimate $I_5$ and we set
\[
A_3= | 2f_n(V_J+w)-2f_n(V_J)-2f_n'(V_J)w-f_n''(V_0)w^2 | .
\]
By the triangle inequality, Taylor inequality \eqref{taylor}, 
$|\partial_s V_0|\lesssim V_0^{\frac{p+1}2}$ (see \eqref{eq:V}),
we have
\begin{align*}
Q \left| \partial_sV_0 \right| A_3
&\lesssim QV_0^{\frac{p+1}2}\left| 2f_n(V_J+w)-2f_n(V_J)-2f_n'(V_J)w-f_n''(V_J)w^2\right| \\&
\quad +QV_0^{\frac{p+1}2}\left| f_n''(V_J)-f_n''(V_0)\right| w^2\\
&\lesssim QV_0^{\frac{p-1}2}\Lambda_1 \lesssim  Q^{\frac{3p+1}{2(p+1)}} \Lambda_1. 
\end{align*}
Using  \eqref{suite}, we conclude that
\begin{equation*} 
|I_5| \lesssim ( s^{-\frac 12}  + \varepsilon s^{-1} ) \norm^2+ (1+ \varepsilon ^{-(p-2)}s^{-1} ) \left(\norm^{p+1}+ \hnorm^{p+1}\right).
\end{equation*}

Finally, we estimate $I_6$ and we set
\[
A_4=\left|2f_n(V_J+w)-2f_n(V_J)-2f_n'(V_J)w\right|.
\]
By the triangle inequality and Taylor's inequality~\eqref{taylor} 
\begin{equation*} 
\begin{split} 
A_4 &\lesssim \left|2f_n(V_J+w)-2f_n(V_J)-2f_n'(V_J)w- f_n '' (V_J) w^2\right| +   |f_n '' (V_J)| w^2 \\
&\lesssim V_0^{-1}  |w|^{p+1} + V_0^{p - \bar p -1}  |w|^{ \bar p+1} + V_0^{p-2} w^2 .
\end{split} 
\end{equation*} 
Using~\eqref{v5bis}, $V_0\lesssim Q^{\frac {1} {p+1}}$, $Q \gtrsim 1$, $Q \lesssim s^{- \frac {2( p+1) } {p-1}}$ and $k\ge 1$, we obtain
\begin{equation*} 
\begin{split} 
Q|\partial_s(V_J- V_0 ) | A_4
&\lesssim QV_0^{ \frac{p-1}{2k}}\left(   |w|^{p+1} + V_0^{p - \bar p }  |w|^{ \bar p+1} + V_0^{p-1} w^2 \right)\\
&\lesssim Q^{1+\frac{p-1}{2k(p+1)}}   \left(   |w|^{p+1} + V_0^{p - \bar p }  |w|^{ \bar p+1} \right) + Q^{1+\frac{p-1}{ p+1 } (1+ \frac {1} {2k})} w^2  \\
&\lesssim Q^{ \frac{ 3p +1 }{2(p+1)}}   \left(   |w|^{p+1} + V_0^{p - \bar p }  |w|^{ \bar p+1} \right) + 
s^{- \frac {2k+1} {k}} Q  w^2 \\
&\lesssim Q^{ \frac{ 3p +1 }{2(p+1)}}\Lambda _1 + s^{- \frac {1} {k}}  s^{-2}Q  w^2.
\end{split} 
\end{equation*} 
Using  \eqref{suite} and $k\ge 2$, we conclude that
\begin{equation*} 
|I_6| \lesssim ( s^{-\frac 12}  + \varepsilon s^{-1} ) \norm^2+ (1+ \varepsilon ^{-(p-2)}s^{-1} ) \left(\norm^{p+1}+ \hnorm^{p+1}\right).
\end{equation*}
Choosing $\varepsilon \le \frac {\lambda } {16}$, then $\omega$ sufficiently small,
and collecting the above estimates, we have proved \eqref{energy}.

\medskip

\textbf{Step 3. Higher order energy terms.}
We claim that for any $ \ell =0,1,\ldots,N$,
\begin{equation}\label{e:dK}
\left| \frac {d\higher _\ell }{ds}\right|\lesssim
s^{-1+\lambda} \left( \norm +  \norm^2+   \hnorm^2\right)+\norm^{p+1}+\hnorm^{p+1}.
\end{equation}

Differentiating \eqref{eq:w} with respect to $s$, 
setting $z_0=\partial_s w$, we have
\begin{equation}\label{eq:z0}\begin{aligned}
&(1-|\nabla \psi|^2) \partial_{ss}z_0 - 2 \nabla \psi\cdot \nabla \partial_s z_0
-(\Delta \psi) \partial_s z_0 - \Delta z_0 \\&\quad
=f_n'(V_J+w)z_0 + (f_n'(V_J+w)-f_n'(V_J))\partial_s V_J + \partial_s\Ens_J.
\end{aligned}\end{equation}
Differentiating $\higher_0=\int (1-|\nabla \psi|^2) (\partial_s z_0)^2+|\nabla z_0|^2$,
 we find from \eqref{eq:z0} and integration by parts
\begin{align*}
\frac 12 \frac{d \higher_0}{ds}
&=\int \left\{f_n'(V_J+w)z_0 + (f_n'(V_J+w)-f_n'(V_J))\partial_s V_J + \partial_s\Ens_J\right\}\partial_s z_0\\
&=I_{12}+I_{13}+I_{14}.
\end{align*}

First, by the Cauchy-Schwarz inequality
\[
|I_{12}|\lesssim \|f_n'(V_J+w)z_0\|_{L^2}\higher_0^{\frac 12}
\lesssim \|f_n'(V_J+w)z_0\|_{L^2}\hnorm.
\]
From
\[|f_n'(V_J+w)|\lesssim |V_0|^{p-1}+|w|^{p-1}\lesssim s^{-\frac{p-1}p} Q^{\frac {p-1}{2p}}+|w|^{p-1}
\lesssim s^{-\frac{p-1}p} Q^{\frac 12}+|w|^{p-1}
\]
and then \eqref{WW3} with $\rho=1$, we have (recall that $1\leq N\leq 4$ and thus $H^2(\R^N)\hookrightarrow L^q(\R^N)$  for all $q\geq 2$, and $H^1(\R^N)\hookrightarrow L^q(\R^N)$ for all $2\le q\le 4$)
\begin{align*}
\|f'(V_J+w)z_0\|_{L^2}^2
&\lesssim s^{-\frac{2(p-1)}p} \int Q (\partial_s w)^2 + \int |w|^{2(p-1)} (\partial_s w)^2\\
&\lesssim s^{-\frac{2(p-1)}p} \norm^2+ \|w\| _{L^{ 4p }}^{2(p-1)}\|\partial_s w\|_{L^{ \frac {4p} {p+1} }}^2\\
&\lesssim s^{-\frac{2(p-1)}p} \norm^2+\|w\|_{H^2}^{2(p-1)}\|\partial_s w\|_{H^1}^2 .
\end{align*}
Thus, using also $-\frac{p-1}p\geq -1+\lambda$ (since $\lambda\leq \frac 1p$)
we have
\[
|I_{12}|\lesssim s^{-\frac{p-1}p} \norm\hnorm +\hnorm^{p+1}
\lesssim s^{-1+\lambda} \left(\norm^2+\hnorm^2\right)+\hnorm^{p+1}.
\]

Second, using \eqref{V1} and \eqref{v5bis},
$ |\partial _s V_J| \lesssim V_0^{1 + \frac {p-1} {2}} + V_0^{1 + \frac {p-1} {2k}} \lesssim V_0 Q^{\frac {p-1} {2 (p+1) }}$, so that  Taylor's inequality \eqref{taylor10} yields
\begin{align*}
|(f_n'(V_J+w)-f_n'(V_J))\partial_s V_J|
&\lesssim Q^{\frac {p-1} {2 (p+1) }} |w|^{p} 
+Q^{\frac {p-1} {2 (p+1) }} V_0 ^{p-1} |w|  \\
&\lesssim Q^{\frac {p-1} {2 (p+1) }} |w|^{p} 
+Q^{\frac {3 (p-1) } {2 (p+1) }}  |w| .
\end{align*}
We have
\begin{align*}
\int Q^{\frac {p-1} {p+1 }} |w|^{2p} 
&\lesssim \int (Q w^2) ^{\frac{p-1}{p+1}} |w|^{2\frac{p^2+1}{p+1}}
\lesssim \left(\int Q w^2\right)^{\frac{p-1}{p+1}}\left(\int w^{p^2+1}\right)^{\frac2{p+1}} \\ &
\lesssim \norm^{2\frac {p-1}{p+1}}\hnorm^{2\frac{p^2+1}{p+1}}
\lesssim ( \norm + \hnorm )^{2p} .
\end{align*}
Moreover, since $\frac {2(p-2)} {p+1} \le \frac {(2p-1) (p-1)} {p( p+1) } $, $Q \gtrsim 1$, $Q \lesssim s^{- \frac {2(p+1)} {p-1}}$, and $\lambda \le \frac {1} {p}$, we have
\begin{equation*} 
\begin{split} 
\int Q^{\frac {3(p-1) } {p+1 }} w ^2 & \lesssim   \int Q^{\frac {2(p-2)} {p+1}} Q w^2
 \lesssim   \int Q^{\frac {(2p-1) (p-1)} {p( p+1) }} Q w^2 \\ 
 & \lesssim s^{- \frac {2 (2p-1) } {p}}  \int Q w^2
 \lesssim s^{- \frac {2 (p-1) } {p}} \norm ^2  \lesssim s^{- 2(1-\lambda ) } \norm ^2 .
\end{split} 
\end{equation*} 
Thus,
\begin{align*}
|I_{13}|&\leq \left(\| Q^{\frac {p-1} {2 (p+1) }} |w|^{p}  \|_{L^2}+\| Q^{\frac {3 (p-1) } {2 (p+1) }} \|_{L^2}\right)\hnorm\\
&\lesssim ( s^{-1 + \lambda } \norm  +( \norm + \hnorm )^p ) \hnorm
\lesssim  s^{-1+\lambda} \left(\norm^2+\hnorm^2\right)+ \norm^{p+1}+\hnorm^{p+1}.
\end{align*}

Third, from \eqref{v3}, for $|x|\geq R$, $|\partial_s \Ens_J|\lesssim |x|^{-\frac{k(p+1)}{p-1}-2}$ 
and thus, since $1\leq N\leq 4$, $\|\partial_s \Ens_J\|_{L^2(|x|\geq R)} \lesssim 1$.
Now, from \eqref{v1}, for $|x|\leq R$, 
\[|\partial_s \Ens_J|\lesssim V_0^{\frac{p+1}2+(1-J+\frac {1+J} k)\frac{p-1}2}
\lesssim V_0^{p+\frac 1k \frac{p-1}2-J(1-\frac 1k)\frac{p-1}2},
\]
and thus, following the proof of \eqref{e:EJ}, we have
$\|\partial_s \Ens_J\|_{L^2(|x|\leq R)} \lesssim s^{-1+\lambda}$. 
Thus,
\[
|I_{14}|\lesssim \|\partial_s\Ens_J\|_{L^2} \hnorm\lesssim s^{-1+\lambda} \hnorm.
\]
The above estimates prove \eqref{e:dK} for $\higher_0$. 

We now prove~\eqref{e:dK} for $ \ell \in \{ 1, \dots, N\}$. 
Differentiating \eqref{eq:w} with respect to $x_ \ell $, 
setting $z_ \ell =\partial_{x_ \ell } w$, we have
\begin{equation}\label{eq:zl}\begin{aligned}
&(1-|\nabla \psi|^2) \partial_{ss}z_ \ell  - 2 \nabla \psi\cdot \nabla \partial_s z_ \ell 
-(\Delta \psi) \partial_s z_ \ell  - \Delta z_ \ell  \\&\quad
=f_n'(V_J+w)z_ \ell  + (f_n'(V_J+w)-f_n'(V_J))\partial_{x_ \ell } V_J + \partial_{x_ \ell }\Ens_J
\\ & \quad + 2 ( \nabla \psi \cdot \nabla \partial  _{ x_ \ell  } \psi  )\partial  _{ ss } w + 2 \nabla \partial  _{ x_ \ell  } \psi \cdot \nabla \partial _s w + (\Delta \partial  _{ x_ \ell  } \psi ) \partial _s w.
\end{aligned}
\end{equation}
Differentiating $\higher_ \ell =\int (1-|\nabla \psi|^2) (\partial_s z_ \ell )^2+|\nabla z_ \ell |^2$,
 we find from \eqref{eq:zl} and integration by parts
\begin{align*}
\frac 12 \frac{d \higher_ \ell }{ds}
&=\int \left\{f_n'(V_J+w)z_ \ell  + (f_n'(V_J+w)-f_n'(V_J))\partial_{x_ \ell } V_J + \partial_{x_ \ell }\Ens_J\right\}\partial_s z_ \ell \\
& \quad + 
\int  \left\{ 2 ( \nabla \psi \cdot \nabla \partial  _{ x_ \ell  } \psi  )\partial  _{ ss } w + 2 \nabla \partial  _{ x_ \ell  } \psi \cdot \nabla \partial _s w + (\Delta \partial  _{ x_ \ell  } \psi ) \partial _s w \right\} \partial_s z_ \ell 
\\
&=I_{15}+I_{16}+I_{17} + I _{ 18 } . 
\end{align*}
The term $I _{ 15 }$ is estimated exactly like $I _{ 12 }$. Next, it follows from~\eqref{V1}, \eqref{v4}, \eqref{v2} and the properties of $\chi $ that $ | \partial  _{ x_\ell } V_J | \lesssim V_0^{1 + \frac {p-1} {2k}}$, so that $I _{ 16 }$ is estimated like $I _{ 13 }$. 

Moreover, from \eqref{v3}, for $|x|\geq R$, $|\partial _{ x_\ell  } \Ens_J|\lesssim |x|^{-3}$ 
and thus, since $1\leq N\leq 4$, $\|\partial_s \Ens_J\|_{L^2(|x|\geq R)} \lesssim 1$.
Now, from \eqref{v1}, for $|x|\leq R$, 
\[|\partial_{ x_\ell  } \Ens_J|\lesssim V_0^{\frac{p+1}2+(-J+\frac {2+J} k)\frac{p-1}2}
\lesssim V_0^{ \frac {p-1} {2} +\frac 1k \frac{p-1}2-J(1-\frac 1k)\frac{p-1}2},
\]
and thus, following the proof of \eqref{e:EJ}, we have
$\|\partial_s \Ens_J\|_{L^2(|x|\leq R)} \lesssim s^{-1+\lambda}$. 
Thus we see that $I _{ 17 }$ is estimated like $I _{ 14 }$. 

Finally, 
\begin{equation*} 
 \left| 2 ( \nabla \psi \cdot \nabla \partial  _{ x_ \ell  } \psi  )\partial  _{ ss } w + 2 \nabla \partial  _{ x_ \ell  } \psi \cdot \nabla \partial _s w + (\Delta \partial  _{ x_ \ell  } \psi ) \partial _s w \right| \lesssim 
  | \partial  _{ ss } w | +  | \nabla \partial _s w | +  | \partial _s w | ,
\end{equation*} 
so that $I _{ 18 } \lesssim \hnorm ^2$. Therefore,  the estimate~\eqref{e:dK} holds for $ \ell \in \{ 1, \dots, N\}$.

\medskip

\textbf{Step 4. Conclusion.}
Since $\energy(S_n)=\higher(S_n)=0$, the following is well-defined
\[
S_n^\star=\sup\{ s\in [S_n,\delta] : \mbox{for all $s'\in [S_n,s]$, $\norm^2+\higher\leq \min(s^{ \lambda} ; \omega)$}\},
\]
and by continuity, $S_n^\star\in (S_n,\delta]$.
It follows from~\eqref{energy}, \eqref{e:dK}, \eqref{WW5}, and $\lambda \le \frac {1} {2}$ that
\begin{equation*}
\frac{d}{dt}\left(\energy+  \higher\right)
\leq  s^{-1+\lambda} \left( C \norm + \frac {\lambda } {4} s^{-\lambda } \norm^2 + C (\norm ^2 + \higher) + C s^{-\lambda } (\norm ^2 + \higher) ^{\frac {p+1} {2}} \right),
\end{equation*}
for some constant $C>0$ independent of $\delta $. By the definition of $S_n^\star$, we deduce that
\begin{equation*}
\frac{d}{dt}\left(\energy+  \higher\right)
\leq \lambda s^{-1+\lambda} \left(C s^{\frac 1 2\lambda}+C s^{\frac {p-1}2\lambda} +\frac14\right),
\end{equation*}
for some constant $C>0$ independent of $\delta $.
We fix   $0<\delta_0\leq \delta$  such that
\[
 C \delta_0^{\frac 1 2\lambda}+C \delta_0^{\frac {p-1}2\lambda} +\frac14\leq \frac 13,
\quad \delta_0^\lambda \leq  \omega.
\]
This gives, for all $S_n\leq s\leq \min(S_n^\star,\delta_0)$, $\frac{d}{dt} (\energy+  \higher )\leq \frac \lambda3 s^{-1+\lambda}$.

By integration, using $\energy(s_n)=\higher(s_n)=0$,  we find  for $S_n\leq s\leq \min(S_n^\star,\delta_0)$,
\begin{equation*}
\energy(s) + \higher (s) \leq \frac 13(s^{\lambda}-S_n^\lambda)\leq \frac 13 (s-S_n)^\lambda.\end{equation*}
Thus, from \eqref{coer}, it holds, for $S_n\leq s\leq \min(S_n^\star,\delta_0)$,
\[
\norm^2(s)+ \higher(s)\leq \frac 23 (s-S_n)^\lambda.
\]
It follows from \eqref{WW5} and the definition of $S_n^\star$ that $S_n^\star\geq \delta_0$
and so, for all $s\in [S_n,\delta_0]$,
\[
\hnorm(s)\lesssim  (s-S_n)^\frac\lambda2.
\]
This completes the proof of the proposition.
\end{proof}

\begin{proof} [Proof of Proposition~$\ref{eExisV}$]
We set
\[
Z_n(s)=V_J(S_n+s),\quad
\eta_n(s,y)=w_n(S_n+s),\quad \mathcal F_n(s)=\Ens_J(S_n+s).
\]
From   Proposition~\ref{pr:unif}, there  exist~$C>0$, $n_0>0$ and $0<\delta_0<1$ such that
\begin{equation}\label{e:unif2}
 \|\eta_n(s)\|_{H^2}+\|\partial_s \eta_n(s)\|_{H^1}+\|\partial_{ss} \eta_n(s)\|_{L^2}\leq C s^{\frac \lambda2}
\end{equation}
for all $n\geq n_0$ and  $s\in [0,\delta_0]$.
Moreover, from \eqref{eq:w}, 
\begin{multline}\label{eq:etan} 
 (1-|\nabla \psi|^2) \partial_{ss} \eta_n - 2 \nabla \psi\cdot \nabla \partial_s \eta_n
-(\Delta \psi) \partial_s \eta_n - \Delta \eta_n \\
  = f_n(Z_n+\eta_n)-f_n(Z_n)+\mathcal F_n.
\end{multline}
It follows from estimate~\eqref{e:unif2} that there exist a subsequence of $(\eta_n)$ (still denoted by $(\eta_n)$) and 
a map $\eta\in L^\infty((0,\delta_0),H^2(\R^N))\cap W^{1,\infty}((0,\delta_0),H^1(\R^N))\cap W^{2,\infty}((0,\delta_0),L^2(\R^N))$
such that
\begin{align}
&\eta_n\mathop{\longrightarrow}_{ n\to\infty} \eta \quad \mbox{in $L^\infty((0,\delta_0),H^2(\R^N))$ weak*}\\
&\partial_s\eta_n\mathop{\longrightarrow}_{n\to\infty} \partial_s\eta \quad \mbox{in $L^\infty((0,\delta_0),H^1(\R^N))$ weak*}\\
&\partial_{ss}\eta_n\mathop{\longrightarrow}_{n\to\infty} \partial_{ss}\eta \quad \mbox{in $L^\infty((0,\delta_0),L^2(\R^N))$ weak*}\\
&\eta_n(s)\mathop{\longrightarrow}_{ n\to\infty} \eta(s) \quad \mbox{weakly in $H^2(\R^N)$, for all $s\in [0,\delta_0]$} \label{fEstetan} \\
&\partial_s\eta_n(s)\mathop{\longrightarrow}_{n\to\infty} \partial_s\eta(s) \quad \mbox{weakly in $H^1(\R^N))$, for all $s\in [0,\delta_0]$.  \label{fEstdsetan}}
\end{align}
It is then easy to pass to the limit in \eqref{eq:etan}, and it follows that
\begin{equation*} 
 (1-|\nabla \psi|^2) \partial_{ss} \eta - 2 \nabla \psi\cdot \nabla \partial_s \eta
-(\Delta \psi) \partial_s \eta - \Delta \eta \\
  = f(V_J+\eta)-f_n(V_J)+\Ens_J
\end{equation*}
in $L^\infty((0,\delta_0),L^2(\R^N))$. Therefore, setting
\[
v(s)=V_J(s)+\eta(s),\quad s\in (0,\delta_0),
\]
it holds 
\begin{equation} \label{fRegv0} 
v\in L^\infty_\Loc ((0,\delta_0),H^2(\R^N))\cap W^{1,\infty}_\Loc ((0,\delta_0),H^1(\R^N))\cap W^{2,\infty}_\Loc ((0,\delta_0),L^2(\R^N))
\end{equation} 
and, using the definition of $\Ens_J$, we see that $v$ is a solution of equation~\eqref{eq:v} in $L^\infty_\Loc ((0,\delta_0),L^2(\R^N))$.
The estimate~\eqref{festV1} follows by letting $n\to \infty $ in~\eqref{e:unif} and using~\eqref{fEstetan} and~\eqref{fEstdsetan}. 
We now prove that $v$ satisfies~\eqref{fRegv}. By standard semigroup theory (see Section~\ref{sAppAreg}) it suffices to prove that $ |v| ^{p-1} v \in C ( (0, \delta _0), H^1 (\R^N ) )$.
Since by~\eqref{fRegv0} $v\in C((0,\delta _0), H^{2-\eta } (\R^N )) $ for every $\eta >0$, and $N\le 4$, 
we have by Sobolev's embeddings $v\in C((0,\delta _0), W^{1, q} (\R^N ) )$ for all $2\le q< 4$ and $ |v|^{p-1} \in C((0,\delta _0), L^r (\R^N ) )$ for $\max\{ 1, \frac {2} {p-1} \} \le r< \infty $. 
Choosing for instance $q= \frac {4(p+1)} {p+3}$ and $r= \frac {4(p+1) } {p-1}$ yields $ |v| ^{p-1} v \in C ( (0, \delta _0), H^1 (\R^N ) )$.

Finally, we prove~\eqref{fNFestinf}. 
We write 
\begin{equation*} 
 | \partial _s v | \ge  | \partial _s V_0  | -  | \partial _s ( V_J-   V_0 ) | -  | \partial _s ( v - V_J) | . 
\end{equation*} 
On the other hand,  $ V_0^{1 + \frac {p-1} {2}}  \lesssim  |\partial _s V_0 |$, so that $ V_0^{1 + \frac {p-1} {2k}}  \lesssim  |\partial _s V_0 |^{1 - \frac {(p-1) (k-1) } {k (p+1) }}$. Therefore, given any $\eta >0$, there exists a constant $C_\eta $ such that 
\begin{equation} \label{fUDP1} 
 V_0^{1 + \frac {p-1} {2k}} \le \eta  |\partial _s V_0| + C_\eta.
\end{equation}  
Since $|\partial_s ( V_j- V_0 ) |\lesssim V_0^{1+\frac{p-1}{2k}}$ by~\eqref{v5bis}, we see that there exists a constant $C$ such that
\begin{equation} \label{fUDP2} 
 | \partial _s v |^2  \ge \frac {1} {2}  | \partial _s V_0  |^2 - C | \partial _s ( v -  V_J ) |^2 -C . 
\end{equation} 
Next, we write
\begin{equation*} 
 |\nabla v | \le  |\nabla (v- V_J) | +  |\nabla V_J | \le  |\nabla (v- V_J) | +  | \nabla V_0 | + \sum_{ j=1 }^J  |\nabla v_j|.  
\end{equation*} 
It follows from~\eqref{v2} that $ |\nabla V_J |\lesssim 1$ for $ |x|\ge R$.
For $ |x|< R$, by~\eqref{v4} and $k\ge 2$,  $ |\nabla v_j| \lesssim V_0 $ for $j\ge 1$; and $ | \nabla V_0 | \lesssim V_0^{1+ \frac {p-1} {2k}}$ by~\eqref{V1}. Using again~\eqref{fUDP1}, we conclude that
\begin{equation} \label{fUDP3} 
 | \nabla  v |^2  \le \frac {1} {4}  | \partial _s V_0  |^2 - C | \nabla ( v -  V_J ) |^2 -C . 
\end{equation} 
Since $ | \partial _s ( v -  V_J ) | +  | \nabla ( v -  V_J ) | \in L^\infty  ((0, \delta _0) , H^1 (\R^N )  )$
by~\eqref{festV1}, the lower estimate~\eqref{fNFestinf} follows from~\eqref{fUDP2} and~\eqref{fUDP3}.
\end{proof} 

\section{Proof of Theorem~\ref{TH:2}}

In this section, we use the following notation. 
We let $\{\mathbf{e}_k;  k=1,\ldots,N\}$ be the canonical basis of $\R^N$.
If $N\ge 2$, then for $x\in \R^N$, we denote $x=(x_1,x_2,\ldots,x_N)$ and $\bar x=(x_2,\ldots,x_N)$.
We set $\bar\Delta u=\sum_{k=2}^N \partial_{x_kx_k} u$.
If $N=1$, we ignore $\bar x$ and $\bar \Delta $.

\subsection{Cut-off of the local hypersurface} \label{sAdjustment} 

Let $\varphi$ be a function satisfying \eqref{th2:2} (see statement of Theorem~\ref{TH:2}).
Without loss of generality, by the invariance by rotation of equation \eqref{wave}, we assume that
\begin{equation*}
\nabla \varphi(0)=\ell \mathbf{e}_1 \quad \mbox{where $0\leq \ell <1$.}
\end{equation*}
(For dimension $1$, the reduction is done by possibly changing $x\mapsto -x$.)
For a positive real $r<1$ small to be defined later, set
\[
 \widetilde{\varphi} (x) = (\varphi(x)-\ell x_1) \chi\left(\frac{|x|}r\right)  +\ell x_1 .
\]
On the one hand, from this definition and the properties of $\chi$, it holds
\begin{equation}\label{on:varphit}
 \widetilde{\varphi} (x)=\varphi(x) \mbox{ for $|x|<r$}, \quad
 \widetilde{\varphi} (x)=\ell x_1 \mbox{ for $|x|>2r$},\quad \nabla \widetilde{\varphi} (0)=\ell \mathbf{e}_1.
\end{equation}
On the other hand, from $\varphi(0)=0$ and $\nabla\varphi(0)=\ell\mathbf{e}_1$, there exists a constant $C>1$ such that for
$|x|<1$, it holds $|\varphi(x)-\ell x_1|\leq C |x|^2$ and $|\nabla \varphi(x)- \ell\mathbf{e}_1 |\leq C |x|$.  In particular, since
\[
\nabla  \widetilde{\varphi} (x)=(\nabla \varphi(x)-\ell\mathbf{e}_1) \chi\left(\frac{|x|}r\right)
+\ell \mathbf{e}_1+\frac 1r (\varphi(x)-\ell x_1)\chi'\left(\frac{|x|}r\right)  \frac {x} { |x|} ,
\]
it holds on $\R^N$,
\[
|\nabla  \widetilde{\varphi} (x)-\ell \mathbf{e}_1|\leq  Cr.
\]
We fix $r>0$ small enough so that
\begin{equation}\label{on:gradvarphit}
\|\nabla  \widetilde{\varphi}  -\ell\mathbf{e}_1\|_{L^\infty}
\leq  (1-\ell )
 \min \Bigl\{  \frac \lambda 8  \frac{p-1}{p+1} , \frac {1} {2}\Bigr\} .
\end{equation}
The first constraint on $ \widetilde{\varphi} $ is related to assumption \eqref{c:psi} in Proposition~\ref{pr:unif}, and the second implies
\begin{equation} \label{gradphi} 
 \| \nabla    \widetilde{\varphi}   \| _{ L^\infty  } \le \frac {\ell +1} {2} < 1.
\end{equation} 

\subsection{Construction of the function $\psi $} \label{sConsPsi} 
We claim that for any $y\in \R^N$, there exists $X_1(y)\in \R$ such that
\begin{equation}\label{def:X}
y_1 = \frac{X_1(y)-\ell  \widetilde{ \varphi} (X_1(y),\bar y)}{(1-\ell^2)^{\frac 12}}.
\end{equation}
(As observed before, we ignore $\bar y$ in dimension $1$.)
To prove the claim, we define
\begin{equation} \label{fDfnPhi} 
\Phi(x_1,  \bar y)=\frac{x_1-\ell  \widetilde{ \varphi} (x_1,\bar y)}{(1-\ell^2)^{\frac 12}},
\end{equation} 
and we compute, using~\eqref{gradphi},
\begin{equation} \label{fEstDerPhi} 
\partial_{x_1}\Phi(x_1 , \bar y)
=\frac{1-\ell \partial_{x_1} \widetilde{ \varphi} (x_1,\bar y)}{(1-\ell^2)^{\frac 12}}
\ge \frac{1-\ell }{(1-\ell^2)^{\frac 12}} =  \Bigl( \frac {1- \ell } {1+ \ell} \Bigr)^{\frac {1} {2}} >0 ,
\end{equation} 
and
\begin{equation} \label{fEstsup} 
\partial_{x_1}\Phi(x_1 , \bar y) \le \frac{1 + \ell  }{(1-\ell^2)^{\frac 12}}
\le  \Bigl( \frac {1+ \ell } {1- \ell} \Bigr)^{\frac {1} {2}} .
\end{equation} 
Thus, for fixed $\bar y\in \R^{N-1}$, the function $x_1\in \R\mapsto  \widetilde{\Phi } _{ \bar y } (x_1) =:  \Phi(x_1 , \bar y)\in \R$ is  increasing and surjective.
It has an inverse function $  \widetilde{\Phi } _{ \bar y }^{-1}$ on $\R$, which is also (strictly) increasing, and we set  $ X_1(y_1 , \bar y) =  \widetilde{\Phi } _{ \bar y }^{-1}(y_1)$ for $y_1\in \R$.
Setting $X_1(y)=X_1(y_1 , \bar y)$, we have proved the claim.
Note that 
\begin{equation*} 
X_1 ( \Phi(x_1,  \bar y), \bar y )=   \widetilde{\Phi } _{ \bar y }^{-1}(  \Phi(x_1,  \bar y) ) =   \widetilde{\Phi } _{ \bar y }^{-1}(  \widetilde{\Phi } _{ \bar y }(x_1) ) = x_1,
\end{equation*} 
so that by~\eqref{fDfnPhi} 
\begin{equation} \label{faltDfnPhi} 
x_1 = X_1  \left( \frac {x_1- \ell  \widetilde{\varphi }(x)  } {(1- \ell^2 )^{\frac {1} {2}}} , \bar x \right)
\end{equation} 
for all $x\in \R^N  $.
Moreover, it follows from~\eqref{fEstDerPhi}-\eqref{fEstsup} that
\begin{equation} \label{fEstDerXun} 
 \Bigl( \frac {1- \ell } {1+ \ell} \Bigr)^{\frac {1} {2}} \le \frac {\partial X_1} {\partial   y_1 }  \le  \Bigl( \frac {1+ \ell } {1- \ell} \Bigr)^{\frac {1} {2}}
\end{equation} 
on $\R^N $. 
Setting $X(y)=(X_1(y),\bar y)$,   it holds
\begin{equation}\label{def:XX}
y_1 = \frac{X_1(y)-\ell  \widetilde{ \varphi} (X(y))}{(1-\ell^2)^{\frac 12}}.
\end{equation}
Moreover, using~\eqref{fEstDerXun}, we see that that
\begin{equation} \label{fEstDerPhi3} 
 |X (y) |\ge \max\{  |X_1 (y)|,  |\bar y| \} \goto _{  |y|\to \infty  } \infty .
\end{equation} 

For all $y\in \R^N$, we define the function $\psi:\R\to \R$ by
\begin{equation}\label{def:psi}
\psi(y)=\frac{ \widetilde{\varphi} (X(y))-\ell X_1(y)}{(1-\ell^2)^{\frac 12}}.
\end{equation}
Equivalently, the functions $\psi$ and $ \widetilde{ \varphi} $ are uniquely related by the following relation on $\R^N$:
\begin{equation}\label{def:psi2}
 \widetilde{\varphi} (x)=(1-\ell^2)^{\frac 12} \psi\left(\frac{x_1-\ell  \widetilde{\varphi} (x)}{(1-\ell^2)^{\frac 12}},\bar x\right)+\ell x_1.
\end{equation} 
We check 
that $\psi $ is of class $\mathcal C^{q_0}$ where $q_0$ is defined in \eqref{th2:1}, and
satisfies the assumptions \eqref{on:psi} and \eqref{c:psi}.

First, since $\varphi$ is of class $\mathcal C^{q_0}$ and $\chi$ is of class $\mathcal C^\infty$,
it follows from their definitions  that $ \widetilde{ \varphi} $ and then the functions $X$ and $\psi$ are of class $\mathcal C^{q_0}$
in $\R^N$. Since $\varphi(0)= \widetilde{ \varphi} (0)=0$, from \eqref{def:psi2}, we also have $\psi(0)=0$.

Second, from~\eqref{on:varphit}, it follows that $ \widetilde{\varphi}  (x) =\ell x_1$ for any $|x|>2r$.
From~\eqref{fEstDerPhi3} and~\eqref{def:psi}, we see that $\psi(y)=0$  for $ |y|$ large.

Last, we estimate $|\nabla \psi|$.
From \eqref{def:psi2}
\begin{equation}  \label{estpsi1} 
(1- \ell \partial  _{ x_1 }  \widetilde{ \varphi}  ( x))
\partial_{y_1}\psi\left(\frac{x_1-\ell  \widetilde{\varphi} (x)}{(1-\ell^2)^{\frac 12}},\bar x\right)
=\partial_{x_1} \widetilde{\varphi} (x)-\ell, 
\end{equation} 
and for $j\neq 1$,
\begin{equation}  \label{estpsi2} 
\partial_{y_j}\psi\left(\frac{x_1-\ell  \widetilde{ \varphi} (x)}{(1-\ell^2)^{\frac 12}},\bar x\right)
=(1-\ell^2)^{-\frac 12}   \partial_{x_j} \widetilde{\varphi} (x) \left( 1 + 
 \ell \partial_{y_1}\psi\left(\frac{x_1-\ell \widetilde{ \varphi} (x)}{(1-\ell^2)^{\frac 12}},\bar x\right)  \right) .
\end{equation} 
It follows from~\eqref{gradphi} that $ | 1- \ell \partial  _{ x_1 }  \widetilde{ \varphi}  ( x) | \ge 1- \ell $, so that~\eqref{estpsi1} and~\eqref{on:gradvarphit} yield
\begin{equation*} 
 \| \partial  _{ y_1 } \psi  \| _{ L^\infty  }\le \frac {1} {1-\ell }  \| \partial  _{ x_1 }  \widetilde{ \varphi}  - \ell  \| _{ L^\infty  } 
 \le \frac \lambda 8  \frac{p-1}{p+1} .
\end{equation*} 
In particular, we see that $ \| \partial  _{ y_1 } \psi  \| _{ L^\infty  }\le  1$. Since $  \|   \partial_{x_j} \widetilde{\varphi}  \| _{ L^\infty  } \le (1-\ell ) \frac \lambda 8  \frac{p-1}{p+1}$ by~\eqref{on:gradvarphit}, we deduce from~\eqref{estpsi2} that
\begin{equation*} 
 \| \partial  _{ y_j } \psi  \| _{ L^\infty  }\le (1-\ell^2)^{-\frac 12}   (1-\ell ) \frac \lambda 8  \frac{p-1}{p+1} (1+\ell ) 
 = (1-\ell^2)^{\frac 12}   \frac \lambda 8  \frac{p-1}{p+1}\le  \frac \lambda 8  \frac{p-1}{p+1}
\end{equation*} 
so that~\eqref{c:psi} is proved.

\subsection{Definition of an appropriate solution of the transformed equation} \label{s:4:3} 
We assume~\eqref{defJ}, \eqref{onqk}, \eqref{on:lam}, \eqref{on:kk} and we consider the function $\psi$ defined in \eqref{def:psi}-\eqref{def:psi2}.
Note that $\psi $ is of class $\mathcal C^{q_0}$ where $q_0$ is defined in \eqref{th2:1}, and
satisfies the assumptions \eqref{on:psi} and \eqref{c:psi}. 
Let the function $A$ be given by~\eqref{fExfnA}. 
We consider the solution $v\in C ( (0,\delta_0),H^2(\R^N))\cap  C^1 ((0,\delta_0),H^1(\R^N))\cap C^2 ((0,\delta_0),L^2(\R^N))$ of~\eqref{eq:v} given by Proposition~\ref{eExisV}.

\subsection{Returning to the original variable} \label{s:4:4} 

\begin{figure}
\setlength{\unitlength}{1.2pt}
\begin{picture}(220,120)
\thinlines
\put(10, 10){\line(1, 1){100}}
\put(210, 10){\line(-1, 1){100}}
\multiput(10,80)(3,0){66}{\line(1,0){2}}
\put(110, 0){\vector(0, 1){120}}
\put(0, 10){\vector(1, 0){220}}

\multiput(80,10)(0,3){23}{\line(0,1){2}}
\multiput(140,10)(0,3){23}{\line(0,1){2}}

\multiput(95,10)(0,3){22}{\line(0,1){2}}
\multiput(125,10)(0,3){25}{\line(0,1){2}}

\put(214, 3){$ |x| $}
\put(120, 2){$  \frac {\varepsilon _0} {2}$}
\put(136, 3){$ \varepsilon _0$}
\put(160, 40){$ |x| = \tau _0 +  \varepsilon _0- t$}
\put(115, 115){$t$}
\put(10, 74){$ t= \tau _0$}
\put(15, 100){$t= \tau _0 +  \widetilde{\varphi }(x) $}

\linethickness{.25mm}

\qbezier(10, 100)(45, 60)(110, 80)

\qbezier(110, 80)(165, 100)(210, 60)

\end{picture}
\caption{The set $\mathcal T$ is the part of the cone $ |x| < \tau _0+ \varepsilon _0 -t$ below the surface $t= \tau _0+  \widetilde{\varphi } (x)$} 
\label{thesetT}
\end{figure}
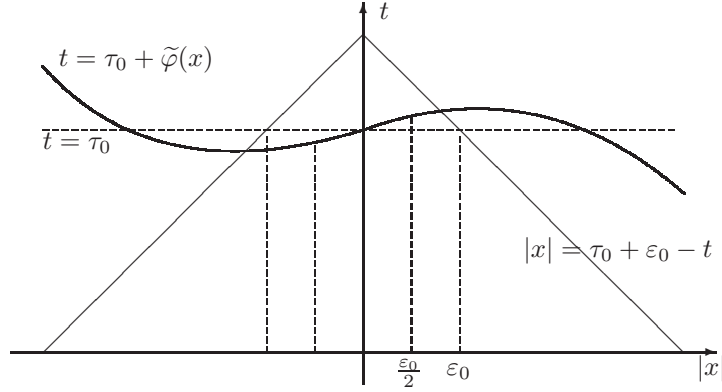

Let
\begin{equation} \label{fDfntaueps} 
\tau _0 =  \Bigl( \frac {1 - \ell } {1+ \ell } \Bigr)^{\frac {1} {2}} \frac {\delta _0} {6}, \quad \varepsilon _0= \frac {1- \ell } {2+ \ell} \tau _0 .
\end{equation} 
Recall (see~\eqref{on:varphit} and~\eqref{gradphi}) that $ \widetilde{\varphi } (0)= 0$ and $ |\nabla  \widetilde{\varphi } |\le \frac {\ell +1} {2}$, so that $ |  \widetilde{\varphi } (x) | \le \frac {\ell +1} {2}  |x|$. Thus 
 we see that 
\begin{equation} \label{fInfTzTF} 
\tau _0 + \inf  _{  |x|\le \tau _0+ \varepsilon _0 }  \widetilde{\varphi } (x) >0.
\end{equation}
It follows that the space-time region 
\[
\mathcal T=\{(t,x)\in \R^{1+N}_+ ; \, 0 \le t<\tau _0 +  \widetilde{\varphi} (x),\ |x|<\tau _0+\varepsilon_0 - t\}
\]
is an influence domain in the sense of~\S\ref{S1.3}. (See Figure~\ref{thesetT}.)
Moreover, let $ |x|\le \frac {\varepsilon _0} {2}$. We have $  \widetilde{\varphi } (x) \le \frac {\varepsilon _0} {2}$. 
Therefore, if $0 \le  t< \tau _0+  \widetilde{\varphi } (x)$, then $t < \tau _0 + \frac {\varepsilon _0} {2}$ so that $|x|< \tau _0 +\varepsilon _0 -t$. It follows that 
\begin{equation} \label{fupdbus} 
 |x|\le \frac {\varepsilon _0} {2} \Longrightarrow \max\{ t>0;\, (t,x) \in \mathcal T  \}= \tau _0 +  \widetilde{\varphi }(x) . 
\end{equation} 

Given $0\le \ell <1$ and $\tau _0\in \R$,  we define the Lorentz transform  $\Map  _{ \ell , \tau _0 } : \R^{1+N} \to \R^{1+N}$ by
\begin{equation*} 
\begin{cases} 
\Map _{ \ell, \tau _0 } (t,x)= (s, y)= (s, y_1, \bar y)  \text{ where} \\ \displaystyle 
s= \frac{t -\tau _0-\ell x_1}{(1-\ell^2)^{\frac 12}}  , \quad y_1= \frac{x_1-\ell (t -\tau _0) }{(1-\ell^2)^{\frac 12}} , \quad \bar y= \bar x.
\end{cases} 
\end{equation*} 
It is well known that $\Map _{ \ell, \tau _0 }$ is a $\mathcal C^\infty $ diffeomorphism with Jacobian determinant $ |\det J _{ \Map _{ \ell, \tau _0 } } | = 1$. 
We also define the transformation $\Map  _{ \psi  } : \R^{1+N} \to \R^{1+N}$ by
\begin{equation*} 
\begin{cases} 
\Map _{ \psi  } (t' ,x' )= (s', y')  \text{ where} \\ \displaystyle 
s' =\psi (x') -t' , y'= x'.
\end{cases} 
\end{equation*} 
Since $\psi $ is of class $\mathcal C^{q_0}$ where $q_0$ is defined in \eqref{th2:1} (see \S\ref{sConsPsi}), it follows easily that $\Map  _{ \psi  }$ is a diffeomorphism of class $\mathcal C^{q_0}$.
Moreover, $ |\det J _{ \Map _{ \psi  } }|= 1$. 
We define the map $\Map :  \R^{1+N} \to \R^{1+N}$ as the composition of the above two maps, i.e.
\begin{equation*} 
\Map= \Map _{ \psi  } \circ \Map  _{ \ell, \tau _0 }.
\end{equation*} 
The map $\Map $ has the following expression
\begin{equation} \label{fDfnMap} 
\begin{cases} 
\Map (t,x)= (s, y)= (s, y_1, \bar y)  \text{ where} \\ \displaystyle 
s=\psi (y) -\frac{t -\tau _0-\ell x_1}{(1-\ell^2)^{\frac 12}}  , \quad y_1= \frac{x_1-\ell (t -\tau _0) }{(1-\ell^2)^{\frac 12}} , \quad \bar y= \bar x
\end{cases} 
\end{equation} 
and it follows that $\Map : \R^{1+N } \to \R^{1+N }$ is a diffeomorphism of class $\mathcal C^{q_0}$ and that $ |\det J _{ \Map  }|= 1$. 

We prove that 
\begin{equation} \label{fBorneMapT1} 
s> 0 \Longleftrightarrow t < \tau _0 +  \widetilde{\varphi }(x)
\end{equation} 
and that
\begin{equation} \label{fBorneMapT} 
 \Map ( \mathcal T) \subset  \Bigl( 0, \frac {\delta _0} {2} \Bigr) \times \R^N .
\end{equation}  

In the case where $\ell=0$, by \eqref{def:X}, we have $X(y)=y$ and thus by \eqref{def:psi}, $\psi(y)= \widetilde{\varphi} (y)$.
Thus in this case, 
\begin{equation} \label{derdesder5} 
\Map (t,x)= ( \widetilde{\varphi} (x)-t + \tau _0 , x). 
\end{equation} 
Property~\eqref{fBorneMapT1} follows.
Moreover, $s \le  \widetilde{\varphi } (x) +\tau _0 \le  |x| +\tau _0 \le 2 \tau _0+\varepsilon _0 \le 3 \tau _0 <\frac {\delta _0} {2}$ by~\eqref{fDfntaueps}. Thus~\eqref{fBorneMapT} is proved in this case.

In the case where $\ell\neq0$, we observe that from \eqref{def:psi},
\[
s=\frac{ \widetilde{\varphi} (X(y))-\ell X_1(y)}{(1-\ell^2)^{\frac 12}}-\frac{t - \tau _0 -\ell x_1}{(1-\ell^2)^{\frac 12}} .
\]
Using \eqref{def:XX}, we replace $  \widetilde{\varphi} (X(y))=\frac1\ell ( X_1(y)-(1-\ell^2)^{\frac 12}y_1  ) $
so that
\begin{equation*} 
\begin{split} 
\ell (1-\ell^2)^{-\frac 12}s & =
\frac{ \ell  \widetilde{\varphi} (X(y))-\ell^2 X_1(y)}{(1-\ell^2) } - \frac{ \ell (t -\tau _0)-\ell ^2 x_1}{(1-\ell^2) } \\
& = X_1 ( y) - \frac {y_1} {(1- \ell^2)^{\frac {1} {2}}} +   \frac{ x_1 - \ell (t- \tau _0) }{(1-\ell^2) } -x_1 \\
 & =X_1(y)-x_1.
\end{split} 
\end{equation*} 
Recall that by~\eqref{faltDfnPhi}, we have $x_1=X_1\Big(\frac{x_1-\ell  \widetilde{\varphi} (x)}{(1-\ell^2)^{\frac 12}} , \bar x\Big)$, which means that
\begin{equation} \label{derdesder} 
\ell (1-\ell^2)^{-\frac 12}s=X_1\left(\frac{x_1-\ell (t- \tau _0) }{(1-\ell^2)^{\frac 12}} , \bar x\right)
-X_1\left(\frac{x_1-\ell  \widetilde{\varphi} (x)}{(1-\ell^2)^{\frac 12}} , \bar x\right),
\end{equation} 
hence, using~\eqref{fEstDerXun},
\begin{equation} \label{fEstDerS} 
-  \Bigl( \frac {1+ \ell } {1- \ell} \Bigr)^{\frac {1} {2}} \le \frac {\partial s} {\partial   t }  \le -  \Bigl( \frac {1- \ell } {1+ \ell} \Bigr)^{\frac {1} {2}} .
\end{equation} 
on $\R^{1+N} $. 
Thus we see that  $s>0$ is equivalent to $t< \tau _0 +  \widetilde{\varphi} (x)$, i.e.~\eqref{fBorneMapT1} holds. 
Moreover, by~\eqref{fEstDerS}, we have on $\mathcal T$
\begin{equation*} 
s \le \Bigl( \frac {1+ \ell } {1- \ell} \Bigr)^{\frac {1} {2}}  | t - \tau _0 -  \widetilde{\varphi } (x) |
= \Bigl( \frac {1+ \ell } {1- \ell} \Bigr)^{\frac {1} {2}}   ( \tau _0 +  \widetilde{\varphi } (x) -t ) .
\end{equation*}  
Using~\eqref{fDfntaueps}, we see that 
 $  \tau _0 +  \widetilde{\varphi } (x) -t< \widetilde{\varphi } (x) +\tau _0 \le 3 \tau _0 $, so that $s< \frac {\delta _0} {2}$. Thus~\eqref{fBorneMapT} is proved in all cases.

We now set
\begin{equation} \label{fDefnu} 
u(t,x)= v ( \Map (t,x ) ), \quad (t, x) \in \mathcal T.
\end{equation} 
We refer to \cite{Kibook}, Exercise~10.7.c   for a similar use of the Lorentz transform. 
Note that by~\eqref{fBorneMapT}, $u$  is well defined. 

Let  $\omega $ be an open subset of $\R^N $ and let $0\le a<b$. Suppose that $[a,b] \times  \overline{\omega } \subset   \mathcal T  $.
 We claim that 
\begin{gather} 
u\in H^2 ( (a,b) \times \omega  )  , \label{fuesth2} \\
u\in L^q ( (a,b) \times \omega  ) \quad  \text{for all }1\le q<\infty , \label{fuestlq} \\
\partial  _{ tt }u = \Delta u +  |u|^{p-1}u\quad  \text{in }L^2 ( (a,b)\times \omega  ).   \label{fuestsol}   
\end{gather} 
Since $[a,b] \times  \overline{\omega } $ is a compact subset of $\R^{1+N}$, it follows that $\Map ( [a,b] \times  \overline{\omega }  )$  is a compact subset of $\R^{1+N}$.
Moreover, it follows from~\eqref{fBorneMapT1}-\eqref{fBorneMapT} that $\Map ( [a,b] \times  \overline{\omega }  )$  is a compact subset of $ (0,\delta _0) \times \R^N  $. 
Let $1\le q<\infty $. Since $v\in C((0, \delta _0), H^2 (\R^N ) )$ and $H^2 (\R^N ) \hookrightarrow L^r (\R^N ) $ for every $r<\infty $ (because $N\le 4$), we have $v\in L^q ( \Map ((a,b)\times \omega ) )$; and so~\eqref{fuestlq} follows from~\eqref{fDefnu} and the change of variable formula. 
Next, let $\theta \in C^\infty _\Comp ( (0,\delta _0) \times \R^N  )$ such that $\theta (x)\equiv 1$ on $\Map ( [a,b] \times  \overline{\omega }  )$. 
Thus we may replace $v$ by $\theta v$ in formula~\eqref{fDefnu}, this does not change the values of $u$ on $(a, b) \times \omega $. Since $\theta v\in H^2 ( (0,\delta _0) \times \R^N  )$, we can approximate $\theta v$ in $H^2 ( (0,\delta _0) \times \R^N  )$ by a sequence $(w_n) _{ n\ge 1 }\subset C^\infty _\Comp ( (0,\delta _0) \times \R^N  )$ supported in a fixed compact of $(0,\delta _0) \times \R^N $. Setting $u_n= w_n \circ \Map $, we have
\begin{equation} \label{funcauchyl2} 
\begin{split} 
\int  _{ (a,b)\times \omega  } |u_n- u |^2 & = \int  _{ \Map ((a,b)\times \omega ) }  |w_n - v|^2  |\det J_\Map|^{-1} \\
&=  \int  _{ \Map ((a,b)\times \omega ) }  |w_n - \theta v|^2   \goto _{ n\to \infty  } 0 .
\end{split} 
\end{equation} 
Next, it follows from~\eqref{fDefnu} that
\begin{equation*} 
\begin{split} 
(1-\ell^2) \partial_{tt} & u_n  =   \Bigl[   \ell^2 ( \partial_{y_1}\psi (\cdot )  ) ^2 + 2\ell \partial_{y_1}\psi( \cdot  ) + 1 \Bigr] \partial_{ss} w_n ( \cdot   ) \\ & 
 + \ell^2 \partial_{y_1y_1}\psi( \cdot  )\partial_s w_n ( \cdot   ) 
+ 2\ell ( \ell \partial_{y_1}\psi( \cdot  )+ 1 ) \partial_{sy_1}w_n ( \cdot  ) + \ell^2  \partial_{y_1y_1}w_n ( \cdot  ) 
\end{split} 
\end{equation*} 
and
\begin{equation*} 
\begin{split} 
(1-\ell^2) \Delta u_n  = &  \Bigl[  (\partial_{y_1}\psi( \cdot  ) ) ^2 + (1-\ell^2) \sum_{k\neq 1} ( \partial_{y_k}\psi( \cdot  ) ) ^2+
2\ell  \partial_{y_1}\psi( \cdot  )
+ \ell^2 \Bigr] \partial_{ss} w_n ( \cdot  )\\
& +  \partial_{y_1y_1}\psi( \cdot  )   \partial_{s} w_n ( \cdot  )
 +  2 ( \partial_{y_1}\psi(\cdot)+ \ell ) \partial_{sy_1}w_n ( \cdot  ) \\
&+2 (1-\ell^2) \sum_{k\neq 1}\partial_{y_k}\psi( \cdot  )\partial_{sy_k} w_n( \cdot   )  + \partial_{y_1y_1}w_n ( \cdot  )  + (1-\ell^2) \bar \Delta w_n ( \cdot  ) \\
& + (1- \ell^2) ( \bar \Delta \psi (\cdot ) ) \partial _s w_n (\cdot )
\end{split} 
\end{equation*} 
where the argument of $\psi $ is $y$ and the argument of $w_n$ is $\Map$. 

Similar formulas hold for all first and second space-time derivatives of $u_n$, so arguing as in~\eqref{funcauchyl2} we conclude that $u_n$ is a Cauchy sequence in $H^2 ( (a,b)\times \omega  )$, from which~\eqref{fuesth2} follows.
In addition, the above two  formulas imply that
\begin{equation*} 
\begin{split} 
\partial_{tt}u_n & -\Delta u_n   \\ &
= [1-|\nabla \psi(\cdot)|^2] \partial_{ss} w_n (\cdot)
-2 \nabla \psi(\cdot)\cdot \nabla\partial_s w_n (\cdot)-\Delta \psi(\cdot)\partial_s w_n (\cdot)
 - \Delta w_n (\cdot) .
\end{split} 
\end{equation*} 
Since $u_n \to u$ in $H^2 ( (a,b)\times \omega  )$ and $w_n \to \theta v$ in $H^2 ( (0,\delta _0) \times \R^N  )$, we may pass to the limit in the above equation. Since $\theta v= v$ in $\Map ( (a,b)\times \omega ) $, we obtain using~\eqref{eq:v} 
\begin{equation*} 
\begin{split} 
\partial_{tt}u  -\Delta u   
&= [1-|\nabla \psi(\cdot)|^2] \partial_{ss} v (\cdot)
-2 \nabla \psi(\cdot)\cdot \nabla\partial_s v (\cdot)-\Delta \psi(\cdot)\partial_s v (\cdot)
 - \Delta v (\cdot) \\
 &= ( |v|^{p-1} v) (\cdot ) =  |u|^{p-1} u
\end{split} 
\end{equation*} 
in $L^2 ( (a,b)\times \omega  ) $. This proves~\eqref{fuestsol}.  

Set 
\begin{equation*} 
  \widetilde{\rho }  = \tau _0 + \frac {\varepsilon _0} {2}
\end{equation*} 
and 
\begin{equation*} 
  \widetilde{\tau } = \min  \Bigl\{  \frac {\varepsilon _0} {2} ,  \tau _0 +  \inf  _{  |x|\le \tau _0+ \varepsilon _0 } \{ \widetilde{\varphi } (x) \} \Bigr\} 
\end{equation*} 
so that $ \widetilde{\tau }  >0$ by~\eqref{fInfTzTF}. 
We see that $(0,  \widetilde{\tau }) \times B _{  \widetilde{\rho }  } \subset \mathcal T$ so that $u\in H^2 ( (0,  \widetilde{\tau }) \times B _{  \widetilde{\rho }  }) \cap L^q ( (0,  \widetilde{\tau }) \times B _{  \widetilde{\rho }  }) $ for all $q<\infty $. In particular, $u\in C([0,  \widetilde{\tau }], H^1( B _{  \widetilde{\rho }  } )) \cap C^1([0,  \widetilde{\tau }], L^2( B _{  \widetilde{\rho }  } ))$, so that $u(0) \in H^1( B _{  \widetilde{\rho }  } )$ and $\partial _t u(0) \in L^2( B _{  \widetilde{\rho }  } )$ are well defined. 

\subsection{Choice of a solution of the nonlinear wave equation} \label{s:4:5} 

We apply Section~\ref{S1.3} to extend  $u$, which is a solution of~\eqref{wave} on $\mathcal T$, to a solution of~\eqref{wave} on a maximal domain of influence that contains $\mathcal T$. 
For this, we consider any pair $( \widetilde{u} _0,  \widetilde{u} _1) \in H^1 (\R^N ) \times L^2 (\R^N ) $ such that $ \widetilde{u} _0 $ and $ \widetilde{u} _1$ coincide with $u(0)$ and $\partial _t u(0)$, respectively, on $B _{  \widetilde{\rho }  }$. 
The initial data $( \widetilde{u} _0,  \widetilde{u} _1)$ give rise to a solution $ \widetilde{u} $ of~\eqref{wave} defined on the maximal influence domain $\Omega_{\rm max}( \widetilde{u} _0, \widetilde{u} _1)$ in the sense of \S\ref{S1.3}. 
We claim that this maximal influence domain contains
\begin{equation*} 
\widetilde{\mathcal T}  = \mathcal T \cap \{ (t,x)\in [0,  \widetilde{\rho }  )\times \R^N ;\,  |x| <  \widetilde{\rho } - t \}
\end{equation*} 
and that $ \widetilde{u} $ coincides with $u$ on $\widetilde{\mathcal T} $.
Indeed, let $(t,x)\in \widetilde{\mathcal T}$ and consider the corresponding open backward cone $C(t,x)$. 
The cone $C(t,x)$ is an influence domain, and It follows easily, using Proposition~\ref{eLinCo4} and~\eqref{fuestlq}, that $u$ is a solution of~\eqref{wave} in $C(t,x)$ with initial data $(u_0, u_1)$, so that $C(t,x) \subset \Omega_{\rm max}( \widetilde{u} _0, \widetilde{u} _1)$. Since $(t,x)\in \widetilde{\mathcal T}$ is arbitrary, this proves the claim.
From now on, we denote by $u$ this solution. 

\subsection{Blowup on the local hypersurface and end of the proof} \label{s:4:6} 

We show blowup on the local hypersurface by proving~\eqref{fBUest}. For this, we further restrict the size of the hypersurface. 
Arguing as in the proof of~\eqref{fupdbus}, we see that 
\begin{equation*} 
 |x|\le \frac {\varepsilon _0} {4} \Longrightarrow \max\{ t>0;\, (t,x) \in  \widetilde{\mathcal T}   \}= \tau _0 +  \widetilde{\varphi }(x) . 
\end{equation*} 
Thus we see that if $ |x_0| \le \frac {\varepsilon _0} {4} $, then the open backward cone $C( \tau _0+  \widetilde{\varphi } (x_0), x_0 ) $ is a subset of $  \widetilde{\mathcal T} $. 

We fix $\ell <\sigma \le 1$ and $ |x_0| \le \frac {\varepsilon _0} {4} $, and we prove~\eqref{fBUest}. 
We use the geometric property  that the image by the map $\Map$ of a cone of slope $\sigma $ contains at least a small cone (estimate~\eqref{fInclCone}), and the lower estimate~\eqref{fNFestinf} for $v$ on this small cone.

Let $s_0\ge 0$ and $y_0 \in \R^N $ be given by $\Map ( \tau _0 +  \widetilde{\varphi } (x_0) , x_0 ) = ( s_0, y_0)$. 
We first note that $s_0= 0$ by~\eqref{def:psi} and~\eqref{fDfnMap}. Moreover, it follows from~\eqref{fDfnMap}, \eqref{gradphi} and~\eqref{fDfntaueps}  that 
\begin{equation} \label{fUDP4} 
 |y_0| \le \frac {\varepsilon _0} {4}  \Bigl( 1 + \frac {2} {(1- \ell^2)^{ \frac {1} {2} }}  \Bigr) \le 1 .
\end{equation} 
Given $0\le  t <  \tau _0 +  \widetilde{\varphi } (x_0)$, we set 
\begin{equation*} 
K(  t ) = \{ ( t' ,x)\in \R^{1 + N};\, t < t' < \tau _0 +  \widetilde{\varphi } (x_0) ,  |x- x_0 | < \sigma ( \tau _0 +  \widetilde{\varphi } (x_0)- t' ) \} 
\end{equation*} 
and, given $s>0$ and $\sigma '>0$ we set
\begin{equation*} 
L (s, \sigma ') = \{ ( s' , y ) \in \R^{1+N} ;\,  0 <  s' < s,  |y- y_0 |< \sigma ' s'  \} .
\end{equation*} 
We claim that there exist $\sigma ' >0$ and $\eta >0$ such that
\begin{equation} \label{fInclCone} 
L ( s (t)  ,  \sigma ' ) \subset  \Map ( K(  t  ) ) 
\end{equation} 
where
\begin{equation} \label{fInclCone2} 
s(t) = \eta ( \tau _0 +  \widetilde{\varphi } (x_0) - t ). 
\end{equation}
Assuming~\eqref{fInclCone}-\eqref{fInclCone2}, we conclude the proof of~\eqref{fBUest}. 
Given $(t,x)\in   \widetilde{\mathcal T} $, it follows from~\eqref{fDefnu} and~\eqref{fDfnMap} that
\begin{equation*} 
\partial _t u(t,x) = \frac {-1} { (1- \ell^2)^{\frac {1} {2}} } [ \partial _s v (\Map (t,x)) + \ell \partial  _{ y_1 } v( \Map (t,x))  ]
\end{equation*} 
so that, using $2\ell xy \le \ell ^2 x^2 + y^2$,
\begin{equation*} 
 |\partial _t u (t,x)|^2 \ge   |\partial _s v ( \Lambda (t,x) )|^2 -    |\partial  _{ y_1 }  v ( \Lambda (t,x) )|^2 .
\end{equation*} 
Therefore,
\begin{equation*} 
\int  _{ K(  t  )  }  | \partial _t u |^2  \ge  \int  _{ \Map ( K(  t ) ) } (  | \partial _s v |^2 -   | \partial  _{ y_1 } v |^2 ) 
\end{equation*} 
Applying~\eqref{fNFestinf} and~\eqref{fInclCone}, we deduce that
\begin{equation} \label{fUDP5} 
\int  _{ K(  t )  }  | \partial _t u |^2  \ge \frac {1} {4} \int  _{ L(s(t), \sigma ') }  |\partial _s V_0 |^2 - C  | \Map ( K(  t  ))  | - \int  _{ \Map ( K(  t ) ) } g^2 .
\end{equation} 
It follows from~\eqref{fInclCone2}, \eqref{dtVrsigma} and~\eqref{fUDP4}  that 
\begin{equation} \label{fUDP6} 
 \liminf  _{ t\uparrow \tau _0+  \widetilde{\varphi } (x_0)  } \frac {1} {\tau _0+  \widetilde{\varphi } (x_0) -t  } \int  _{ L(s(t), \sigma ') }  |\partial _s V_0 |^2 \ge  \liminf  _{ s\downarrow 0 } \frac {1} {s  } \int  _{ L(s , \sigma ') }  |\partial _s V_0 |^2  > 0 . 
\end{equation} 
Furthermore,
\begin{equation} \label{fUDP7} 
 | \Map ( K(  t  ))  | \lesssim  | K(t) | \lesssim (\tau _0 +  \widetilde{\varphi } (x_0) - t)^{1 + N} . 
\end{equation} 
Next,  $H^1 (\R^N ) \hookrightarrow L^\frac {2 (N+2) } {N} (\R^N ) $, so that $g^2\in  L^{\frac {N+2} {N}} ((0,\delta _0) \times \R^N )) $; and so by~\eqref{fUDP7} 
\begin{equation} \label{fUDP8} 
\int  _{ \Map ( K(  t ) ) } g^2 \lesssim  | \Map ( K(  t  ))  | ^{\frac {2} {N+2}} \lesssim  (\tau _0 +  \widetilde{\varphi } (x_0) - t)^{1 + \frac {N} {N+2}}. 
\end{equation} 
Estimate~\eqref{fBUest} follows from~\eqref{fUDP5}--\eqref{fUDP8}.

It remains to prove the claim~\eqref{fInclCone}-\eqref{fInclCone2}.
Let $(s', y) \in \R^{1+ N}_+ $ and $(t',x) \in  \R^{1+ N}$ such that $(s', y) = \Map (t', x)$. In particular, $t'\le  \tau _0+  \widetilde{\varphi } (x)$ by~\eqref{fBorneMapT1}. 
We prove that
\begin{equation} \label{derdesder2} 
s' \le    \Bigl( \frac {1+\ell} {1- \ell} \Bigr)^{\frac {1} {2}} (\tau _0 +  \widetilde{\varphi } (x_0)- t' +  |x-x_0|) . 
\end{equation} 
In the case $\ell =0$, this follows from~\eqref{derdesder5} and the inequality $ | \widetilde{\varphi }(x) - \widetilde{\varphi }(x_0)  |\le  |x- x_0|$ (see~\eqref{gradphi}).
In the case $\ell \not = 0$, then by~\eqref{fDfnMap} and~\eqref{derdesder},
\begin{equation*} 
\ell (1- \ell ^2)^{-\frac {1} {2}} s' = X_1 (y_1,  \bar{y})  - X_1 \Bigl(  y_1 - \frac {\ell ( \tau _0 +  \widetilde{\varphi }(x) - t') } { (1- \ell ^2)^{\frac {1} {2}}} ,  \bar{y} \Bigr) .
\end{equation*} 
Using the right-hand side inequality in~\eqref{fEstDerXun}, and then~\eqref{gradphi}, we deduce
\begin{equation*} 
s' \le   \Bigl( \frac {1+\ell} {1- \ell} \Bigr)^{\frac {1} {2}} (\tau _0 +  \widetilde{\varphi } (x)- t') 
\end{equation*} 
and~\eqref{derdesder2} by using again~\eqref{gradphi}.  

Next we claim that
\begin{equation} \label{derdesder3} 
 |x - x_0| \le  | y-y_0 | + \ell (\tau _0 +  \widetilde{\varphi } (x_0)- t') .
\end{equation} 
Indeed, by~\eqref{fDfnMap} for $(t' ,x)$ and for $(\tau _0 +  \widetilde{\varphi }(x_0) , x_0)$,
\begin{align*} 
y_1 -(y_0)_1 & = \frac {x_1- (x_0)_1} { (1- \ell^2)^{\frac {1} {2}}}  +  \frac {\ell ( \tau _0 +  \widetilde{\varphi }(x_0) - t') } { (1- \ell ^2)^{\frac {1} {2}}} \\
\bar y- \bar y_0 & = \bar x- \bar x_0 
\end{align*} 
so that
\begin{align*} 
 | x_1 -(x_0)_1 | & \le   | y_1 -(y_0)_1 | +  \ell ( \tau _0 +  \widetilde{\varphi }(x_0) - t')  \\
 |\bar x- \bar x_0 |& = | \bar y- \bar y_0 | .
\end{align*} 
Estimate~\eqref{derdesder3} follows by using the triangle inequality $\sqrt{ (a+b)^2 + c^2 } \le \sqrt{ a^2+c^2 } +  |b|$. 
Assuming now $(s' , y) \in L (s, \sigma ')$ for some $s>0$ and $\sigma '>0$, we deduce from~\eqref{derdesder3} that 
\begin{equation*} 
 |x - x_0| \le \sigma ' s' + \ell (\tau _0 +  \widetilde{\varphi } (x_0)- t') .
\end{equation*} 
Estimating $s'$ by~\eqref{derdesder2}, we obtain
\begin{equation*} 
 \Bigl( 1 - \sigma '   \Bigl( \frac {1+\ell} {1- \ell} \Bigr)^{\frac {1} {2}} \Bigr)  |x - x_0|  \le  
  \Bigl( \ell +  \sigma '   \Bigl( \frac {1+\ell} {1- \ell} \Bigr)^{\frac {1} {2}} \Bigr) ( \tau _0 +  \widetilde{\varphi }(x_0) - t') .
\end{equation*} 
Since $\sigma >\ell $, we see that if $\sigma '>0$ and $\delta >0$ are sufficiently small, then
\begin{equation}  \label{derdesder4} 
 |x - x_0|  \le   ( \sigma -\delta ) ( \tau _0 +  \widetilde{\varphi }(x_0) - t') .
\end{equation} 
It now remains to prove that if $ s' \le \eta ( \tau _0+  \widetilde{\varphi } (x_0) - t )$ for some sufficiently small $\eta >0$, then $t' \ge t$. 
By~\eqref{fEstDerS}, and then~\eqref{gradphi}, we deduce
\begin{equation*} 
s' \ge  \Bigl( \frac {1-\ell } {1 + \ell } \Bigr)^{\frac {1} {2}} (\tau _0 +  \widetilde{\varphi } (x ) - t' )
\ge  \Bigl( \frac {1-\ell } {1 + \ell } \Bigr)^{\frac {1} {2}} (\tau _0 +  \widetilde{\varphi } (x_0 ) - t' -  |x-x_0|) .
\end{equation*} 
Using~\eqref{derdesder4} we obtain
\begin{equation*} 
s' \ge (1- \sigma +\delta ) \Bigl( \frac {1-\ell } {1 + \ell } \Bigr)^{\frac {1} {2}} (\tau _0 +  \widetilde{\varphi } (x_0 ) - t' ) 
\end{equation*}
which proves the claim for $\eta = (1- \sigma +\delta ) ( \frac {1-\ell } {1 + \ell } )^{\frac {1} {2}}$.

Finally, we prove that the hypersurface $\{ (t,x)\in \R^{1+N}_+;\,  |x_0|< \frac {\varepsilon _0} {4},  t= \tau _0 +  \widetilde{\varphi } (x_0)   \}$ is contained in the upper boundary of the maximal influence domain $\Omega  _{ \mathrm {max} }$ of the solution~$u$. Indeed, otherwise there would exist $ |x_0|< \frac {\varepsilon _0} {4}$ and $t >\tau _0+  \widetilde{\varphi } (x_0) $ such that $ C(t, x_0) \subset \Omega  _{ \mathrm {max} }$ with the notation~\eqref{defcone}. In particular, 
\begin{equation*} 
\partial _tu \in C([0, \tau _0 +  \widetilde{\varphi } (x_0)), L^2 ( \{  |x-x_0| < 
\textstyle{ \frac {t- \tau _0-  \widetilde{\varphi }(x_0) } {2} }
\} )).
\end{equation*}  
This is absurd, since by~\eqref{fBUest}, given $\ell < \sigma  \le 1$, there exist a sequence $t_n \uparrow  \tau _0 +  \widetilde{\varphi } (x_0)$ and $\delta >0$ such that 
\begin{equation} \label{fUDP9} 
\int  _{ \{  |x-x_0| < \sigma (\tau _0 +  \widetilde{\varphi } (x_0) -t _n) \} }  |\partial _t u (t_n) |^2 \ge \delta .
\end{equation} 
This completes the proof of the theorem, where $\tau _0$ and $\varepsilon _0$ are given by~\eqref{fDfntaueps}, and $\varepsilon = \min \{ \frac {\varepsilon _0} {4}, r \}$ with $r$ defined in Section~\ref{TH:2} (recall that $\varphi =  \widetilde{\varphi } $ on $\{  |x|<r \}$).

\appendix
\section{The wave equation~\eqref{eq:v}} \label{sAppA} 
Let $\psi \in \cont^2 (\R^N ) \cap W^{2, \infty } (\R^N ) $ satisfy $ \| \nabla \psi  \| _{ L^\infty  } <1$.
It follows in particular that $(1-  |\nabla \psi |^2) ^{-1} \in C^1 (\R^N ) \cap W^{1, \infty } (\R^N ) $.

\subsection{The associated semigroup}
Let $X$ be the Hilbert space $H^1\times L^2$, equipped with the (equivalent) scalar product
\[
\langle (a,b),(\tilde a,\tilde b)\rangle_X = \int \left(\nabla a\cdot \nabla \tilde a + a \tilde a\right)
+\int   b \tilde b (1-|\nabla \psi|^2),
\]
and consider the linear operator $\mathcal A$ on $X$ defined by
\[
\mathcal A  = 
\begin{pmatrix}
0 & 1\\
\frac {\Delta-1} {1-|\nabla \psi|^2} & \frac {2\nabla \psi \cdot \nabla + \Delta \psi}{1-|\nabla \psi|^2}
\end{pmatrix},
\]
with domain $D(\mathcal A)=H^2\times H^1$.
We compute
\[
\langle\mathcal A (a,b),(a, b)\rangle_X =
\int \left( \nabla a \cdot \nabla b + ab\right) + \int (\Delta a - a) b
+ (2\nabla \psi \cdot \nabla b + ( \Delta \psi ) b) b=0,
\]
which proves that $\mathcal A$ is dissipative in $X$. Moreover, 
for any $(c,d)\in X$, there exist $(a,b)\in D(\mathcal A)$ such that
$(a,b) -\mathcal A(a,b) =(c,d) $. Indeed, this system reduces to
\begin{equation*} 
\begin{cases} 
b=a-c\\
2a-\Delta a-2\nabla \psi \cdot \nabla a - (\Delta \psi) a = 
-2\nabla \psi \cdot \nabla c - (\Delta \psi) c+ c+ (1-|\nabla \psi|^2)d.
\end{cases} 
\end{equation*} 
It is easy to solve the second equation by the Lax-Milgram theorem, and we obtain a solution $a\in H^1 (\R^N ) $. Since, by the equation, $\Delta a\in L^2 (\R^N ) $, we see that $a\in H^2 (\R^N ) $. 
The first equation then yields $b\in H^1 (\R^N ) $. 
In particular $\mathcal A$ is maximal dissipative, hence is the generator of a $C_0$ semigroup of contractions $(e^{ t  \mathcal A}) _{ t\ge 0 }$ on $X$. (See e.g. Chapter~1, Theorem~4.3, p.~14 in~\cite{Pazy}.)

\subsection{The nonlinear equation}
Using the notation $U= \binom {v} {\partial _s v} $, we rewrite equation \eqref{eq:v} as
\begin{equation} \label{syst:ab}
\partial_s  U 
=\mathcal A U +\mathcal F (U) 
\end{equation}
where
\begin{equation} \label{deff:ab}
\mathcal F \binom{a}{b} = (1-|\nabla \psi|^2 )^{-1} \begin{pmatrix} 
0\\ f(a) + a \end{pmatrix}.
\end{equation} 

\subsection{Regularity} \label{sAppAreg} 
Suppose $T>0$ and $U\in L^\infty ((0,T) , D( \mathcal A )) \cap W^{1, \infty } ((0,T), X) $ is such that $ \mathcal F( U) \in L^\infty ((0,T), X) $ and $U$ satisfies equation~\eqref{syst:ab} for a.a. $0<t<T$. 
If $ \mathcal F( U) \in C ((0,T), D( \mathcal A )) $, then $U\in C ((0,T) , D( \mathcal A )) \cap C^1 ((0,T), X) $. Indeed, $U$ is weakly continuous $(0,T) \to  D( \mathcal A )$. In particular, $U(t) \in D( \mathcal A ) $ for all $0<t<T$ and the result follows easily, see e.g. Chapter 4, Corollary~2.6, p.~108 in~\cite{Pazy}.

\subsection{The case of equation~\eqref{eq:vn}} \label{sAppAexist} 
Equation~\eqref{eq:vn} is equation~\eqref{syst:ab}, where $f$ is replaced by $f_n$ in~\eqref{deff:ab}.  
Since $f_n(0)=0$ and $f_n $ is globally Lipschitz $ \R \to \R$, we see that the map $u \mapsto f_n (u)$ is globally Lipschitz $L^2 (\R^N ) \to L^2 (\R^N ) $. In particular, $\mathcal F: X \to  X$ is globally Lipschitz, and the existence and uniqueness of a global, mild solution $U\in C([0, \infty ), X)$ of \eqref{syst:ab} with the initial condition  $U( 0) = U_0 \in X$ is a direct consequence of standard semigroup theory.
(See e.g. Chapter 6, Theorem~1.2, p.~184 in~\cite{Pazy}.)
Moreover, since $f_n $ is globally Lipschitz and $C^1$, it follows easily that the map $u \mapsto f_n (u)$ is continuous $H^1 (\R^N ) \to H^1 (\R^N ) $. Therefore $\mathcal F$ is continuous $X \to D(\mathcal A)$, so that $\mathcal F (U) \in C ( [0, \infty ),  D(\mathcal A))$. 
It follows, again by the semigroup theory, that if the initial value is in $D(\mathcal A)$, then $U\in C([0, \infty ), D(\mathcal A)) \cap C^1 ([0, \infty ), X)$ is a solution of~\eqref{syst:ab}. 
(See e.g. Chapter 4, Corollary~2.6, p.~108 in~\cite{Pazy}.)

\section{Uniqueness on light cones}

We state and prove a uniqueness property for solutions of the nonlinear wave equation on light cones (Proposition~\ref{eLinCo4}), for which we could not find a reference. 
We first recall in the following remark the relevant results concerning the local well-posedness of the Cauchy problem. 

\begin{rem}[Local well-posedness]  \label{eLinCo5} 
Let $N\ge 1$, let $p$ such that $1<p \le \frac {N+2} {N-2}$ ($1<p<\infty $ if $N=1,2$) and let $(u_0,u_1) \in H^1 (\R^N ) \times L^2 (\R^N ) $.
We summarize some results on the existence of $T>0$ and a local solution 
\begin{equation} \label{feLinCo5:1} 
u\in C([0,T], H^1 (\R^N ) ) \cap C^1([0,T], L^2 (\R^N ) ) 
\end{equation} 
of the wave equation
\begin{equation} \label{feLinCo5:2} 
\begin{cases} 
\partial_{tt} u - \Delta u=|u|^{p-1} u \\
u(0)= u_0, \quad \partial _t u(0)= u_1.
\end{cases} 
\end{equation} 
We also discuss the property
\begin{equation} \label{feLinCo5:3} 
u\in L^{\frac {2(N+1) } {N-2}} ( (0,T) \times \R^N  )
\end{equation}
in the case $N\ge 3$. 

\begin{enumerate}
\item Case $N=1,2$. There exist $T>0$ and a unique solution $u$ of~\eqref{feLinCo5:2}  in the class~\eqref{feLinCo5:1}. See e.g.~\cite[Theorem~6.2.2]{CaHa}. 

\item Case $N\ge 3$, $p < \frac {N+2} {N-2}$. 
There exist $T>0$ and a unique solution $u$ of~\eqref{feLinCo5:2}  in the class~\eqref{feLinCo5:1}, and this solution satisfies~\eqref{feLinCo5:3} by possibly choosing $T$ smaller. 
Indeed, existence follows from~\cite[Proposition~2.3]{GV89} and uniqueness from~\cite[Proposition~3.1]{GV85}. Moreover, applying Lemma~3.3 in~\cite{GV85} with $\rho = \frac {N} {2(N-1)}$, $r= \frac {2(N^2-1) } {N^2 -2N +3}$ and $q= \frac {2(N+1)} {N-1}$, we see that $u\in L^q ((0,T), \dot B^\rho  _{ r,2 }(\R^N ))$, hence~\eqref{feLinCo5:3} by Sobolev's embedding. 

\item Case $N=3$, $p=5$. 
There exist $T>0$ and a solution $u$ of~\eqref{feLinCo5:2}  in the class~\eqref{feLinCo5:1}-\eqref{feLinCo5:3}. See e.g.~\cite[Theorem~2.7]{KMwave}.
Moreover, solutions of~\eqref{feLinCo5:2}  in the class~\eqref{feLinCo5:1}-\eqref{feLinCo5:3} are unique. This last property is not explicitly stated in~\cite{KMwave}, but it easily follows from the proof. (It also follows from~Proposition~\ref{eLinCo4}.)

\item Case $N\ge 4$, $p= \frac {N+2} {N-2}$. 
There exist $T>0$ and a unique solution $u$ of~\eqref{feLinCo5:2}  in the class~\eqref{feLinCo5:1}, and this solution satisfies property~\eqref{feLinCo5:3} by possibly choosing $T$ smaller. 
Indeed, existence is established in~\cite{GSV} (see also~\cite[Theorem~2.7]{KMwave} for the case $N=4,5$ and~\cite[Theorem~3.3]{Bulut} for the case $N\ge 6$). Uniqueness is proved in~\cite[Theorem~2]{Planchon} for $N=4$, in~\cite[Theorem~3]{Planchon} for $N=5$ and in~\cite[Theorem~3.4]{Bulut} for $N\ge 6$. 
Property~\eqref{feLinCo5:3} follows from~\cite[Theorem~2.7]{KMwave} in the case $N=4,5$. In the case $N\ge 6$, it follows from~\cite[Theorem~3.3]{Bulut} that $u\in L^q ((0,T), \dot B^\rho  _{ r,2 }(\R^N ))$ with $\rho = \frac {N} {2(N-1)}$, $r= \frac {2(N^2-1) } {N^2 -2N +3}$ and $q= \frac {2(N+1)} {N-1}$, 
hence~\eqref{feLinCo5:3} by Sobolev's embedding. 

\end{enumerate} 
\end{rem} 

\begin{prop} [Uniqueness on light cones] \label{eLinCo4} 
Let $N\ge 1$ and let $p$ satisfy $1<p \le \frac {N+2} {N-2}$ ($1<p<\infty $ if $N=1,2$). Let $R>0$, $0<\tau <R$, and let $B_R$ be the open ball of center $0$ and radius $R$ in $\R^N $.
Let 
\begin{equation*} 
u,v\in C([0,\tau ], H^1 ( B_R ) ) \cap C^1 ([0,\tau ], L^2 (B_R ) ) \cap C^2 ([0,\tau ], H^{-1} ( B_R ) )) 
\end{equation*} 
 be two solutions of the wave equation $\partial  _{ tt } u = \Delta u +  |u|^{p-1} u$ in $H^{-1} ( B_R ) )$ for $\le t\le \tau $. 
If $N\ge 3$ and $p> \frac {N} {N-2}$, suppose in addition that $u,v\in L^{\frac {2(N+1) } {N-2}} ( (0,T) \times B_R  )$. If $u(0)= v(0)$ and $\partial _tu (0)= \partial _t v(0)$, then $u=v$ on $\{ (t,x)\in (0, \tau )\times B_R;\,  |x|< R-t \}$. 
\end{prop}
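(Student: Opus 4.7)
Set $w=u-v$, which lies in the regularity class stated in the proposition, has zero Cauchy data at $t=0$ on $B_R$, and satisfies $\partial_{tt}w-\Delta w=f(u)-f(v)$ in $H^{-1}(B_R)$ with $f(z)=|z|^{p-1}z$. Denote $\Omega_t=\{(s,x):0<s<t,\ |x|<R-s\}$ and set
\[
E(t)=\frac12\int_{|x|<R-t}(|\partial_tw|^2+|\nabla w|^2)\,dx.
\]
The goal is $E\equiv0$ on $[0,\tau]$. Apply the divergence identity on $\Omega_t$ to the vector field $(\tfrac12|\partial_sw|^2+\tfrac12|\nabla w|^2,\,-\partial_sw\,\nabla w)$ in $\R^{1+N}$ (justified in our low regularity class by standard mollification in $x$ together with the $C^2_tH^{-1}_x$ control of $w$): the integrand on the lateral face $\{|x|=R-s\}$ equals $\tfrac12|\partial_sw|^2+\tfrac12|\nabla w|^2-\partial_sw\,(\nu\cdot\nabla w)$ with $\nu=x/|x|$, and is nonnegative by Cauchy--Schwarz. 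Combined with the vanishing Cauchy data at $s=0$, this yields
\[
E(t)\le\int_{\Omega_t}|\partial_sw|\,|f(u)-f(v)|\,dx\,ds\lesssim\int_0^t\int_{|x|<R-s}|\partial_sw|(|u|^{p-1}+|v|^{p-1})|w|\,dx\,ds.
\]

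When $N\le 2$, or $N\ge 3$ and $p\le N/(N-2)$, we have $N(p-1)\le 2^\ast=2N/(N-2)$; H\"older on $B_{R-s}$ with the pair $(N(p-1),2^\ast)$, Sobolev's embedding, and the uniform $H^1$-bound of $u,v$ on $[0,\tau]$ give
\[
\|(|u|^{p-1}+|v|^{p-1})w\|_{L^2(B_{R-s})}\lesssim\|w\|_{H^1(B_{R-s})}\lesssim\sqrt{E(s)}+\|w\|_{L^2(B_{R-s})}.
\]
Since $w(s,x)=\int_0^s\partial_\sigma w(\sigma,x)\,d\sigma$ for $|x|<R-s$ (because $|x|<R-s$ implies $|x|<R-\sigma$ for $\sigma\le s$), we obtain $\|w(s)\|_{L^2(B_{R-s})}^2\le 2s\int_0^sE(\sigma)\,d\sigma$. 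Combining this with Cauchy--Schwarz in time applied to $\int_0^t|\partial_sw|\cdot(\cdots)$, and applying Gr\"onwall's lemma to the quantity $E(t)+\int_0^tE(\sigma)\,d\sigma$ on a small initial interval, forces $E\equiv0$ there; iteration covers $[0,\tau]$.

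When $N\ge 3$ and $p>N/(N-2)$, the Sobolev route above fails, and I use the extra hypothesis $u,v\in L^q(\Omega_\tau)$ with $q=2(N+1)/(N-2)$. Choose a wave-admissible Strichartz pair $(\tilde q,\tilde r)$ realizing the space $L^q_{t,x}$ (that is, $\tilde q=\tilde r=q$ in the standard conformal scaling, with the minor adjustments needed in low $N$) and its inhomogeneous dual $(\tilde q',\tilde r')$. Apply the global Strichartz estimate on $\R^{1+N}$ to the extension of $w$ by zero outside $\Omega_\tau$: finite speed of propagation for the linear wave equation ensures that this extension is, inside the cone, driven by the truncated source $\mathbf{1}_{\Omega_\tau}(f(u)-f(v))$ and hence coincides with $w$. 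Together with the nonlinear H\"older estimate
\[
\|f(u)-f(v)\|_{L^{\tilde q'}_tL^{\tilde r'}_x(\Omega_t)}\lesssim(\|u\|_{L^q(\Omega_t)}^{p-1}+\|v\|_{L^q(\Omega_t)}^{p-1})\|w\|_{L^q(\Omega_t)},
\]
this gives $\|w\|_{L^q(\Omega_t)}\le C(t)\|w\|_{L^q(\Omega_t)}$ with $C(t)\to 0$ as $t\to 0$ by absolute continuity of the integral of $u,v$. For $t$ sufficiently small this forces $w\equiv 0$ on $\Omega_t$; running the same argument from any time $t_1\le\tau$ at which $w$ already vanishes on the corresponding truncated cone, and using the fact that the set of such $t_1$ is closed and nonempty, yields $E\equiv 0$ on $[0,\tau]$ by continuity. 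The hard part is precisely this Strichartz step on a truncated cone: justifying the extension-by-zero and its compatibility with the Duhamel formula in our weak regularity class requires either an approximation of $(u_0,u_1)$ by smooth data followed by passage to the limit using the energy and Strichartz estimates, or a direct verification from the distributional form of the equation and the $C^2_tH^{-1}_x$ regularity. Once this is in place, the nonlinear H\"older estimate and the smallness of $\|u\|_{L^q(\Omega_t)}+\|v\|_{L^q(\Omega_t)}$ for small $t$ close the argument.
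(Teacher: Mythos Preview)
Your strategy is the same as the paper's: split into the two regimes $p\le \frac{N}{N-2}$ (energy method plus Gronwall) and $p>\frac{N}{N-2}$ (Strichartz plus absorption for small time), and localize to the cone by extending the forcing by zero to $\R^N$ and invoking finite speed of propagation for the linear equation. The paper packages the localization step as a separate lemma (its Lemma~B.1), proving both the energy estimate and the Strichartz estimate on the truncated cone by this extension-plus-mollification device; you sketch the same mechanism inline.

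There is, however, a genuine imprecision in your Strichartz step. You place $w$ in $L^q_{t,x}$ with $q=\frac{2(N+1)}{N-2}$, the same exponent as the hypothesis space for $u,v$, and appeal to a diagonal pair ``and its inhomogeneous dual $(\tilde q',\tilde r')$''. But the diagonal dual pair this would require is not wave-admissible (for instance, when $N=3$, $p=5$, the needed dual pair $(\frac{8}{3},\frac{8}{3})$ violates $\frac{2}{q}\le (N-1)(\frac12-\frac1r)$), so the estimate $\|w\|_{L^q}\lesssim\|h\|_{L^{\tilde q'}}$ is not available as stated. The paper avoids this by working at the $H^{1/2}$ (conformal) level: it puts $w$ in $L^{\frac{2(N+1)}{N-1}}_{t,x}$ and the forcing in $L^{\frac{2(N+1)}{N+3}}_{t,x}$, which \emph{are} a self-dual admissible diagonal pair. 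The H\"older inequality then reads
\[
\bigl\||u|^{p-1}u-|v|^{p-1}v\bigr\|_{L^{\frac{2(N+1)}{N+3}}}
\lesssim\Bigl(\tau^{\frac{2}{N+1}}+\|u\|_{L^{\frac{2(N+1)}{N-2}}}^{\frac{4}{N-2}}+\|v\|_{L^{\frac{2(N+1)}{N-2}}}^{\frac{4}{N-2}}\Bigr)\|u-v\|_{L^{\frac{2(N+1)}{N-1}}},
\]
using $|u|^{p-1}\lesssim 1+|u|^{\frac{4}{N-2}}$ (valid for all $p\le\frac{N+2}{N-2}$). This explains both where the hypothesis exponent $\frac{2(N+1)}{N-2}$ enters (as the coefficient space, not the solution space) and why the argument covers the full subcritical range $\frac{N}{N-2}<p<\frac{N+2}{N-2}$ uniformly, which your H\"older estimate with exponent $p-1$ on $\|u\|_{L^q}$ does not directly do. Once you correct the exponents along these lines, your argument goes through exactly as in the paper.
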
 

The proof of Proposition~\ref{eLinCo4} relies on the following local estimates. 

\begin{lem} \label{eLinCo2} 
Let $R>0$, $0<\tau <R$, $h\in C([0,\tau ], L^q (B_R))$ for some $q\ge 1$, $q\ge \frac {2N} {N+2}$
(so that $h\in C([0,\tau ], H^{-1} (B_R))$), and let
\begin{equation*}
z\in C([0,\tau ], L^2 (B_R ))  \cap C^1([0,\tau ], H^{-1} (B_R))\cap C^2([0,\tau ], H^{-2} (B_R)) 
\end{equation*} 
satisfy $\partial  _{ tt }z = \Delta z +h $ in $H^{-2} (B_R)$ for all $0\le t\le \tau $ and if $z(0)= \partial _t z (0) =0$. If $h _{ | E(0,R, \tau ) } \in  L^2 ( E(0, R, \tau ) ) $ with the notation~\eqref{fTrCo1},  then $ z(t) \in H^1(B _{ R-t) }$ for all $0<t <\tau $, and
\begin{equation} \label{fAX6}
 \| z (t) \| _{ H^1(B_{R-t} )}  \le C e^{Ct} \| h \| _{ L^2 ( E(0, R, t ) ) }
\end{equation} 
 for all $0<t<\tau $. 
If $N\ge 2$ and $h\in L^{\frac {2(N+1)} {N+3}} ( E(0,R,\tau ))$, then $z _{ | E(0,R, \tau ) } \in  L^{\frac {2(N+1)} {N-1}} ( E(0, R, \tau ) ) $ and 
\begin{equation} \label{fAX6b}
 \| z \| _{ L^{\frac {2(N+1)} {N-1}} ( E(0, R, \tau  ) ) } \le C  \| h \| _{ L^{\frac {2(N+1)} {N+3}} ( E(0, R, \tau  ) ) } .
\end{equation} 
In~\eqref{fAX6} and~\eqref{fAX6b}, the constant $C$ independent of $h$, $R$, $\tau $ and $t$.
\end{lem}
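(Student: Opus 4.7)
The plan is to treat the two estimates separately: \eqref{fAX6} by the classical energy identity on a truncated cone, and \eqref{fAX6b} by extending the source to all of $\R^{1+N}$ and invoking the global Strichartz estimate for the free wave equation, with a uniqueness/finite-speed-of-propagation step to identify the restriction of the extended solution with $z$.

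For~\eqref{fAX6}, I would first regularize $h$ by space--time mollification to obtain smooth solutions $z_\varepsilon $ for which all computations below are classical, and pass to the limit at the end. Forming the local energy
\[
\mathcal E(t) = \frac 12 \int _{ B _{ R-t } } ( |\partial _t z_\varepsilon |^2 + |\nabla z_\varepsilon |^2 ) \, dx
\]
and differentiating, the divergence theorem produces a lateral boundary integrand on $\partial B _{ R-t }$ that is pointwise nonnegative by Cauchy--Schwarz (the classical observation that the energy flux through a characteristic cone has a definite sign). The derivative of $\mathcal E$ is therefore controlled by the source contribution $\int h_\varepsilon \partial _t z_\varepsilon $ alone, and a Gronwall-type argument yields $\mathcal E(t) \lesssim \|h_\varepsilon \|_{L^2(E(0,R,t))}^2$. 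The $L^2(B_{R-t})$ portion of the $H^1$ norm is recovered by writing $z_\varepsilon (t,x) = \int _0^t \partial _s z_\varepsilon (s,x)\, ds$ and exploiting the inclusion $B_{R-t} \subset B_{R-s}$ for $s\le t$, which introduces at most a polynomial factor in $t$, comfortably absorbed into $e^{Ct}$; passing $\varepsilon \to 0$ yields~\eqref{fAX6}.

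For~\eqref{fAX6b}, extend $h$ by zero outside $E(0,R,\tau )$ to $\widetilde h \in L^{\frac {2(N+1)}{N+3}} ( \R^{1+N} )$ and let $\widetilde z$ be the Duhamel solution on $\R^{1+N}$ of $\partial _{ tt }\widetilde z - \Delta \widetilde z = \widetilde h$ with zero initial data. The global Strichartz estimate for the wave equation applied at the admissible diagonal pair $(\frac {2(N+1)}{N-1}, \frac {2(N+1)}{N-1})$, which is valid for $N\ge 2$ and scale invariant so that the constant is purely dimensional, yields
\[
\|\widetilde z\|_{L^{\frac {2(N+1)}{N-1}}(\R^{1+N})} \le C \|h\|_{L^{\frac {2(N+1)}{N+3}}(E(0,R,\tau ))}.
\]
It remains to identify $\widetilde z _{|E(0,R,\tau )}$ with $z$, after which~\eqref{fAX6b} is immediate.

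The identification step is where I expect the main obstacle. The difference $w= z- \widetilde z _{|E(0,R,\tau )}$ lies only in $C([0,\tau ], L^2(B_R)) \cap C^1([0,\tau ], H^{-1}(B_R))$ and solves the homogeneous wave equation in $H^{-2}(B_R)$ with vanishing initial data, which is below the regularity required to apply~\eqref{fAX6} directly. I plan to mollify $w$ in the spatial variable by a translation-invariant smooth kernel; since convolution commutes with $\partial _t$ and $\Delta $, the mollified function $w_\varepsilon $ satisfies the same homogeneous equation with vanishing initial data, and on slightly shrunk cones $E(0, R- \varepsilon , \tau )$ it has the spatial regularity needed for the energy identity. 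Applying~\eqref{fAX6} to $w_\varepsilon $ on these shrunk cones forces $w_\varepsilon \equiv 0$ on $E(0, R-\varepsilon , \tau )$; letting $\varepsilon \downarrow 0$ concludes that $w\equiv 0$ on $E(0,R,\tau )$ and completes the proof.
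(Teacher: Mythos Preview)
Your proposal is correct and shares the paper's key ingredients: extending the source by zero to $\R^{1+N}$, invoking the global Strichartz estimate at the diagonal pair $\bigl(\tfrac{2(N+1)}{N-1},\tfrac{2(N+1)}{N-1}\bigr)$, and identifying the extended solution with $z$ on the cone by spatial mollification of the difference $w=z-\widetilde z$ (which commutes with $\partial_t$ and $\Delta$) followed by finite speed of propagation on shrunk cones. The only organizational difference is in how you handle~\eqref{fAX6}: you argue directly via the local energy--flux identity on the truncated cone, whereas the paper treats both~\eqref{fAX6} and~\eqref{fAX6b} uniformly by first passing to the global solution $\widetilde z$ on $\R^N$ (where the energy inequality and Strichartz are standard) and then performing a single identification step. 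The paper's route is slightly more economical, since the identification is needed anyway for~\eqref{fAX6b}; your direct flux argument for~\eqref{fAX6} is of course also standard, though your phrase ``regularize $h$ by space--time mollification to obtain smooth solutions $z_\varepsilon$'' is imprecise---what you actually want, and what the paper does, is to mollify $z$ (or $w$) in the space variable only, which preserves the equation, the zero initial data, and the $C^2$-in-time regularity, and lands you on a slightly smaller ball.
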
 

\begin{proof} 
We define $ \widetilde{h} \in C([0,\tau ], L^q ( \R^N ))$ by 
\begin{equation*} 
 \widetilde{h} = 
 \begin{cases} 
 h & \text{on }(0,\tau )\times B_R \\ 0  &\text{elsewhere}. 
 \end{cases} 
\end{equation*} 
We let $ \widetilde{z} \in C([0,\tau ], L^2 (\R^N  ))  \cap C^1([0,\tau ], H^{-1} (\R^N ))\cap C^2([0,\tau ], H^{-2} (\R^N )) $ be the solution of the wave equation $\partial  _{ tt }  \widetilde{z} - \Delta  \widetilde{z} =  \widetilde{h} $ on $\R^N $ with the initial conditions $ \widetilde{z} (0) = \partial _t  \widetilde{z} (0) =0 $.
Note that, given any $0<t\le \tau <R$ and $1\le r\le \infty $, $  \|  \widetilde{h}  \| _{ L^r ( E(0, R, t ) ) }=  \| h \| _{ L^r ( E(0, R, t ) ) } $. Therefore, estimate~\eqref{fAX6} with $z$ replaced by $ \widetilde{z} $ follows from the standard energy inequality for $ \widetilde{z} $; and~\eqref{fAX6b} with $z$ replaced by $ \widetilde{z} $ follows from the Strichartz estimates (see~\cite[Corollary~1.3]{KeelTao}).

To conclude the proof, we show that $z$ and $ \widetilde{z} $ coincide on $E( 0, R, \tau )$. 
We let $w (t)= (z(t) -  \widetilde{z} (t) )  _{ | B_R } $ for all $0\le t\le \tau $, so that
\begin{equation} \label{fAX4}
w\in C([0,\tau ], L^2 (B_R ))  \cap C^1([0,\tau ], H^{-1} (B_R))\cap C^2([0,\tau ], H^{-2} (B_R)) 
\end{equation} 
satisfies $\partial  _{ tt }w = \Delta w $ in $H^{-2} (B_R)$ for all $0\le t\le \tau $ and $w(0)= \partial _t w (0) =0$.
Thus we need to show that $w=0$ a.e. on $E( 0, R, \tau )$.
Let $\rho \in C^\infty _\Comp (\R^N )$, $\rho \ge 0$, be radially symmetric, supported in $B_1$, and satisfy $\int \rho =1$. Given $\varepsilon >0$, let $\rho _\varepsilon (x)= \varepsilon ^{-N} \rho ( \frac {x} {\varepsilon } )$. 
Let $0< \eta < R$ and $0<\varepsilon <\eta /2$. Since $\rho _\varepsilon $ is supported in $B_\varepsilon $, it follows that $\rho _\varepsilon \star w$ is well defined in $B _{ R-\eta }$, and we set $w_\varepsilon = ( \rho _\varepsilon \star w )  _{ | B _{ R-\eta  } }$.
We claim that 
\begin{gather} 
w_\varepsilon \in C^2([0,\tau ] \times  \overline{B _{ R-\eta }} ) \label{fAX1}  \\
\partial  _{ tt } w_\varepsilon =\Delta w_\varepsilon \quad  \text{on}\quad  [0,\tau ] \times  \overline{B _{ R-\eta }} 
\label{fAX2}  \\
 w_\varepsilon (0) = \partial _t  w_\varepsilon (0) =0 \quad  \text{on}\quad B _{ R-\eta }  .\label{fAX3}
\end{gather} 
By finite speed of propagation, it follows that $w_\varepsilon $ identically vanishes on $E(0, R- \eta, \tau )$. Letting $\varepsilon \to 0$, we deduce that $w$ vanishes a.e. on $E(0, R- \eta, \tau )$; and letting $\eta \to 0$ we see that $w$ vanishes a.e. on $E(0, R, \tau )$.
It remains to prove the claims~\eqref{fAX1}-\eqref{fAX3}. 
Given $m \in \N$ and $ \theta \in H^{-m } (B_R))$, recall that $\rho _\varepsilon \star \theta  \in H^{-m} (B _{ R-\eta }))$ is given by 
\begin{equation*} 
\langle \rho  _\varepsilon \star \theta , \varphi  \rangle  _{ H^{-m} (B _{ R-\eta }) ,  H^{m}_0 (B _{ R-\eta } ) }= 
\langle \theta , \rho _\varepsilon \star \varphi  \rangle  _{ H^{-m} (B _{ R } ) ,  H^{m}_0 (B _{ R }) }
\end{equation*} 
for all $\varphi \in C^\infty _\Comp (B _{ R-\eta  })$. It is well known that $\rho _\varepsilon \star \theta  \in C^\infty  (  \overline{B _{ R-\eta }} ))$, $D^\alpha ( \rho _\varepsilon \star \theta  ) = \rho _\varepsilon \star (D^\alpha \theta ) $ for all $\alpha \in \N^N$, and $ \| \rho _\varepsilon \star \theta  \| _{ C (  \overline{B _{ R-\eta }} ) }\lesssim  \| \theta  \| _{ H^{-m} (B_R) }$. 
On the other hand,  it follows from~\eqref{fAX4} that $D^\alpha \partial _t^\beta w\in C ([0,\tau ], H^{-2} (B_R))$ for all $\alpha \in \N^N$ and $\beta \in \N$ such that $ |\alpha |+ \beta \le 2$. Thus we see that $D^\alpha \partial _t^\beta w_\varepsilon \in C ([0,\tau ] \times  \overline{B_{R-\eta} } )$ and that $D^\alpha \partial _t^\beta w_\varepsilon = \rho _\varepsilon  \star D^\alpha \partial _t^\beta w$. 
Properties~\eqref{fAX1}-\eqref{fAX3} easily follow. 
\end{proof} 

\begin{proof} [Proof of Proposition~\rm{\ref{eLinCo4}}]
We need only prove the result for $\tau $ small, the general case follows by iteration. 

The case $p \le \frac {N} {N-2}$ (any $1<p<\infty $ if $N=1,2$). 
We note that $r= 2p $ satisfies $2<r \le \frac {2N} {N-2}$ ($2<r<\infty $ if $N=1,2$), so that by~\eqref{fAX6} and Sobolev's embedding
\begin{equation*} 
 \| (u-v) (t) \| _{ L^r (B _{ R-t }) }^2 \le C e^{Ct} \int _0^t  \| \,  |u|^{p-1}u - |v|^{p-1}v  \| _{ L^2 (B _{ R-s }) }^2 ds .
\end{equation*} 
Since $ \| \,  |u|^{p-1}u - |v|^{p-1}v  \| _{ L^2} \le C  ( \| u\| _{ L^r } +  \|v\| _{ L^r })^{p-1}  \|u-v \| _{ L^r }$ and $u$ and $v$ are bounded in $H^1$, hence in $L^r$, the result follows by Gronwall's inequality.

The case $N\ge 3$ and $p= \frac {N+2} {N-2}$. 
We note that, since $p \le \frac {N+2} {N-2}$, 
\begin{equation*} 
  \bigl|  |u|^{p-1} u -  |v|^{p-1} v \bigr|  \lesssim (  |u|^{p-1} +  |v|^{p-1})  |u-v|  \lesssim ( 1+  |u|^{\frac {4} {N-2}} +  |v|^{\frac {4} {N-2}})  |u-v| . 
\end{equation*} 
By H\"older's inequality, it follows that 
\begin{equation*} 
 \| \,  |u|^{p-1} u -  |v|^{p-1} v  \|  _{ L^{\frac {2(N+1)} {N+3 }}} \lesssim 
 \Bigl( \tau ^{\frac {2} {N+1}}+  \| u \| _{ L^{\frac {2(N+1)} {N-2 }} }^{ \frac {4} {N-2} } + \| u \| _{ L^{\frac {2(N+1)} {N-2 }}}^{ \frac {4} {N-2} } \Bigr)  \| u-v \| _{ L^{\frac {2(N+1)} {N-1 }} }
\end{equation*} 
where all the integrals are on $E(0,R,\tau  )$, with the notation~\eqref{fTrCo1}. 
Applying the Strichartz inequality~\eqref{fAX6b}, we deduce that 
\begin{equation*} 
 \| u-v \|  _{ L^{\frac {2(N+1)} {N-1}}  } \le C \Bigl( \tau ^{\frac {2} {N+1}}+  \| u \| _{ L^{\frac {2(N+1)} {N-2 }} }^{ \frac {4} {N-2} } + \| u \| _{ L^{\frac {2(N+1)} {N-2 }}}^{ \frac {4} {N-2} } \Bigr)  \| u-v \| _{ L^{\frac {2(N+1)} {N-1 }} }
\end{equation*} 
where all the integrals are on $E(0,R,\tau )$. 
Since
\begin{equation*} 
\| u \| _{ L^{\frac {2(N+1)} {N-2 }} ( E(0,R,\tau ) )} + \| v \| _{ L^{\frac {2(N+1)} {N-2 }} ( E(0,R,\tau ) )} \goto _{ \tau \downarrow 0 }0
\end{equation*} 
the conclusion follows by choosing $\tau $ sufficiently small. 
\end{proof}

\color{black}

\end{document}